\DeclareMathOperator{\het}{ht}
\DeclareTextCommand{\textgamma}{PU}{\83\263}% U+03B3
\newcommand\uD[1]{\underline{\operatorname{D}}_{#1}}
\newcommand\uspe[1]{\underline{\operatorname{S}}_{#1}}
\newcommand\sL[1]{\operatorname{L}\left(#1\right)}
\newcommand\sM[1]{\operatorname{M}\left(#1\right)}
\newcommand\sY[1]{\operatorname{Y}\left(#1\right)}
\newcommand\sLx[1]{\operatorname{L}_{#1}}
\newcommand{\clapdots}{\; \mathclap{\dots} \;}
\renewcommand{\phi}{\varphi}
\newcommand{\s}[2]{s\hspace{-4pt}\underset{\scriptscriptstyle{#2}}{\overset{\scriptscriptstyle{#1}}{\downarrow}}}
\newcommand{\su}[2]{s\hspace{-4pt}\overset{\scriptscriptstyle{#2}}{\underset{\scriptscriptstyle{#1}}{\uparrow}}}
\newcommand\wh{\Yfillcolour{white}}
\definecolor{bittersweet}{rgb}{1.0, 0.44, 0.37}
	\definecolor{lightcoral}{rgb}{0.94, 0.5, 0.5}
\newcommand\lc{\Yfillcolour{bittersweet}}
\def\bgg{\text{\boldmath$g$}}
\def\b1{\text{\boldmath$1$}}
\begin{document}

\title{Decomposable Specht modules indexed by bihooks II}

\author{Robert Muth\\\normalsize Washington \& Jefferson College\\\normalsize Washington, PA, USA 15301 \\\texttt{\normalsize rmuth@washjeff.edu}\\[11pt] Liron Speyer%\footnote{corresponding author}
\\\normalsize Okinawa Institute of Science and Technology\\\normalsize Okinawa, Japan 904-0495 \\\texttt{\normalsize liron.speyer@oist.jp}\\[11pt] Louise Sutton\footnote{Previous address: University of Manchester and Heilbronn Institute for Mathematical Research, UK M13 9PL}
%\normalsize University of Manchester and Heilbronn Institute for Mathematical Research\\\normalsize Manchester, UK M13 9PL
%\\\texttt{\normalsize louise.sutton@manchester.ac.uk}}
\\\normalsize Okinawa Institute of Science and Technology\\\normalsize Okinawa, Japan 904-0495\\
\texttt{\normalsize louise.sutton@oist.jp}
}

\renewcommand\auth{Robert Muth, Liron Speyer \& Louise Sutton}

\runninghead{Decomposable Specht modules indexed by bihooks II}
\msc{20C30, 20C08, 05E10}

\toptitle

\begin{abstract}
Previously, the last two authors found large families of decomposable Specht modules labelled by bihooks, over the Iwahori--Hecke algebra of type $B$.
In most cases we conjectured that these were the only decomposable Specht modules labelled by bihooks, proving it in some instances.
Inspired by a recent semisimplicity result of Bowman, Bessenrodt and the third author, we look back at our decomposable Specht modules and show that they are often either semisimple, or very close to being so. 
We obtain their exact structure and composition factors in these cases. 
In the process, we determine the graded decomposition numbers for almost all of the decomposable Specht modules indexed by bihooks.
\end{abstract}

%\subsection*{Keywords}
%Specht modules; KLR algebras; Hecke algebras; decomposable Specht modules; semisimple Specht modules.

\section{Introduction}

The Specht modules are a natural family of modules for the symmetric group, indexed by partitions, whose combinatorial construction has natural generalisations to Hecke algebras, and more generally to cyclotomic Hecke algebras, where \emph{multipartitions} play the corresponding indexing role.
In all of these cases, these modules are the ordinary irreducible modules for these algebras, and it is an open and difficult problem to determine their structure outside of the semisimple situation.
One natural question one can ask is when these important modules are \emph{decomposable}.
For the symmetric groups, an old result of James \cite[Corollary 13.18]{James} tells us that the Specht modules are always indecomposable over fields of characteristic not equal to $2$; this has an analogue for their Hecke algebras too -- the Specht modules are indecomposable outside of \emph{quantum characteristic} $e=2$.
Already in this setting, it is an unresolved, hard problem to determine which Specht modules are decomposable when $e=2$.
Some progress has been made by Murphy \cite{gm80} and the second author \cite{ls14}, where decomposable Specht modules indexed by \emph{hook partitions} were classified in the symmetric group and Hecke algebra cases, respectively.
Some more decomposable Specht modules for symmetric groups and their Hecke algebras can be found in \cite{df12,dg20,bbs19}.

The analogous question for cyclotomic Hecke algebras has not yet been studied, until the prequel to this paper~\cite{ss20}.
There, the second and third authors initiated this study for (integral) Hecke algebras of type $B$, which may be seen as level 2 cyclotomic Hecke algebras.
These algebras have Specht modules $\spe\la$ that are now indexed by \emph{bipartitions} $\la$, and it is known (for example by~\cite[Corollary 3.12]{fs16}) that all Specht modules are indecomposable unless $e=2$ or the \emph{bicharge} is of the form $\kappa = (i,i)$ for some $i$.
(Up to isomorphism we may assume that $i=0$ in this case.)
For $\kappa =(0,0)$, we found a large family of decomposable Specht modules indexed by bihooks, that is bipartitions where each component consists of a hook, a natural generalisation of the hook partitions previously studied.
The main results are summarised in the below theorem, where $p$ denotes the characteristic of the ground field $\bbf$.

\begin{thmc}{ss20}{Theorems 4.1 and 5.4}\label{thm:decompbihooks}
Let $\kappa = (0,0)$, and $\la = ((ke + a, 1^b),(je + a, 1^b))$ or $((b+1, 1^{je+a-1}), (b+1, 1^{ke+a-1}))$, for some $j,k\geq 1$, $0< a \leq e$ and $0 \leq b < e$ with $a+b \neq e$, or for $a=b=0$.
	\begin{enumerate}[label=(\roman*)]
		\item For $j,  k > 1$, if $j+k$ is even and $p \neq 2$, or if $j+k$ is odd, then $\spe\la$ is decomposable.
		
		\item If $j=1$ or $k=1$, then $\spe\la$ is decomposable if and only if $p \nmid j+k$.
	\end{enumerate}
\end{thmc}

%The $e=2$ analogue of this result is the following theorem.
%
%\begin{thm}\cite[Theorem 5.4]{ss20}
%	Let $e=2$ and $\kappa = (0,0)$, and let $\la = ((2k + a, 1^b),(2j + a, 1^b))$, $((b+1, 1^{2j+a-1}), (b+1, 1^{2k+a-1}))$, $((2k+a, 1^b),(a, 1^{2j+b}))$, or $((a,1^{2k+b}),(2j+a, 1^b))$ for some $j,k\geq 1$, $0< a \leq 2$ and $0 \leq b < 2$ with $a+b \neq 2$, or for $a=b=0$.
%	\begin{enumerate}[label=(\roman*)]
%		\item For $j, k > 1$, if $j+k$ is even and $\nchar \bbf \neq 2$, or if $j+k$ is odd, then $\spe\la$ is decomposable.
%		
%		\item If $j=1$ or $k=1$, then $\spe\la$ is decomposable if and only if $\nchar\bbf \nmid j+k$.
%	\end{enumerate}
%\end{thm}

As noted in the proof of~\cite[Theorem~5.4]{ss20}, if $e=2$, then $\spe{((ke),(je))} \cong \spe{((ke),(1^{je}))}$, since the presentations of these two Specht modules are identical.
This allows the above results to also cover $\la = ((2k+a, 1^b),(a, 1^{2j+b}))$ or $((a,1^{2k+b}),(2j+a, 1^b))$.
We conjectured~\cite[Conjecture~4.2]{ss20} that if $e\neq 2$ and $p \neq 2$, these are all of the decomposable Specht modules indexed by bihooks.

As in the previous paper, our approach here uses the cyclotomic KLR algebras introduced by Khovanov, Lauda and Rouquier (\cite{kl09,rouq}), via the now-famous isomorphism of  Brundan and Kleshchev~\cite{bkisom}.
This perspective allows for the study of the \emph{graded} representation theory of cyclotomic Hecke algebras.

We observed -- using the LLT algorithm -- that, over a field of characteristic $0$, the above Specht modules seemed to have all composition factors focussed in a single degree.
A recent result of Bowman, Bessenrodt and the third author~\cite[Theorem~3.2]{bbs19} then implies that the Specht modules must be semisimple.
The purpose of the current paper is to study the structure of these decomposable Specht modules we found previously, and in particular determine when they are semisimple.

Our first main result determines precisely when the Specht modules are semisimple, and determines the summands (the composition factors, with grading shift).
It appears in the body of the paper as \cref{thm:k>jss}.

%{\theoremstyle{plain}\newtheorem*{mainresult}{Theorem \ref{thm:k>jss}}
%\begin{mainresult}
\begin{thm}
Suppose $k \geq j\geq 1$.
Then $\spe{((ke),(je))}$ is semisimple if and only if one of the following holds:
\begin{itemize}
\item $p\neq 2$ and $p$ does not divide any of the integers $k+j, k+j-1, \dots, k-j+2$;

\item $p=2$, $j=1$, and $k$ is even;

\item $p=2$, $j=2$, and $k\equiv 1 \pmod 4$.
\end{itemize}
When semisimple, $\spe{((ke),(je))}$ is isomorphic to
\[
\bigoplus_{r=1}^{j} \D{\left( \left((k+j-r)e,(r-1)e+1\right),(e-1) \right)}\langle j\rangle \oplus \D{((ke+je),\varnothing)}\langle j\rangle.
\]
\end{thm}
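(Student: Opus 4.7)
The plan is to exploit the semisimplicity criterion of Bowman--Bessenrodt--Sutton: if all composition factors of a graded Specht module are concentrated in a single degree, then the Specht module must be semisimple. The first task is therefore to compute the graded decomposition numbers $[\spe{((ke),(je))} : \D{\mu}\langle s\rangle]$ in characteristic $0$, and show that the nonzero multiplicities are all confined to the degree shift $\langle j\rangle$, with the simples $\D{\mu}$ being exactly those indexed by $((k+j-r)e,(r-1)e+1),(e-1))$ for $r=1,\dots,j$ together with $((ke+je),\varnothing)$. My plan is to use the LLT algorithm applied to the bipartition $((ke),(je))$, working via the combinatorial description of good nodes/removable boxes on the level-$2$ Fock space. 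Because the residue sequence of a bihook is highly restricted, the canonical basis coefficients can be read off explicitly, yielding the claimed list of composition factors with all grading shifts equal to $\langle j\rangle$.

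With the characteristic-$0$ decomposition numbers in hand, I would identify the summands in the decomposition of $\spe{((ke),(je))}$ given by Theorem~\ref{thm:decompbihooks} with the simples produced above. Since the $j+1$ proposed summands $\D{((k+j-r)e,(r-1)e+1),(e-1))}\langle j\rangle$ and $\D{((ke+je),\varnothing)}\langle j\rangle$ account for exactly the composition factors predicted by the LLT calculation, and all are in a single degree, the BBS criterion forces $\spe{((ke),(je))}$ to be semisimple in characteristic~$0$, isomorphic to the stated direct sum.

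To transfer this to characteristic $p>0$, I would invoke the adjustment matrix formalism: the characteristic-$p$ graded decomposition matrix equals the characteristic-$0$ one multiplied by an adjustment matrix with $\bbn[q,q^{-1}]$-entries and $1$s on the diagonal. The Specht module remains semisimple in characteristic $p$ precisely when no further collapse of composition factors occurs, i.e.\ when the relevant adjustment matrix entries vanish. My plan is to compute these entries and read off the arithmetic obstruction; this is where I expect the divisibility conditions $p\nmid k+j,k+j-1,\dots,k-j+2$ to arise, reflecting exactly $2j-1$ consecutive integers that can potentially contribute to the adjustment matrix for the $j$ hook-type summands. The main obstacle is controlling this adjustment matrix carefully enough to match the stated divisibility conditions exactly, since in general the adjustment matrix is notoriously hard to compute; for bihooks, however, the rigid combinatorial structure and low number of composition factors should make this feasible by direct argument.

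Finally, the exceptional cases $p=2$ with $(j,k)=(1,k)$, $k$ even, or $(j,k)=(2,k)$, $k\equiv 1\pmod 4$ need separate treatment. Here I would use the isomorphism $\spe{((ke),(je))}\cong \spe{((ke),(1^{je}))}$ noted in the introduction (valid when $e=2$), which allows the $e=2$ cases to be pulled back to the classical hook setting where results of Murphy~\cite{gm80} and Speyer~\cite{ls14} apply. In the $e>2$, $p=2$ situation, the divisibility condition in the general statement already captures the obstruction, so no extra analysis is needed beyond the main adjustment-matrix argument. The hardest step overall will be the second one, precisely determining the characteristic-$p$ adjustment matrix entries and matching them to the elementary number-theoretic conditions in the theorem.
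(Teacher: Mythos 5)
Your proposal takes a fundamentally different route from the paper, and the route has serious gaps. The paper's proof works entirely through the Morita equivalence $\scrm$ from Section~3 (Kleshchev--Muth imaginary Schur--Weyl duality), under which $\spe{((ke),(je))}$ corresponds to $\Delta(1^k)\otimes\Delta(1^j)$ over the classical Schur algebra. That tensor product has a Weyl filtration by $\Delta(1^{k+j}),\Delta(2,1^{k+j-2}),\dots,\Delta(2^j,1^{k-j})$. The \emph{only if} direction is then immediate from the fact that Weyl modules are always indecomposable: if any Weyl module in the filtration is reducible, the Specht module cannot split as a sum of simples. The \emph{if} direction follows because, when all those Weyl modules are simple, $\Delta(1^k)\otimes\Delta(1^j)$ is a self-dual tilting module that is a sum of simple tilting modules. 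Simultaneous irreducibility of those two-column Weyl modules is governed by the classical James--Mathas criterion (Corollary~2.19 in the paper), which produces exactly the stated divisibility conditions, including the $p=2$ exceptional cases, with no separate argument. The labels and the uniform grading shift $\langle j\rangle$ are then pinned down by induction on $j$ using the homomorphisms $\alpha_{k,j}$ and $\gamma_{k,j}$.

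The gaps in your route are the following. First, you never carry out the LLT or adjustment-matrix computations; you only assert they should be "feasible". Second, and more structurally, the chain LLT $\to$ BBS $\to$ adjustment matrix cannot in principle deliver the \emph{only if} direction. The BBS criterion is only a \emph{sufficient} condition for semisimplicity, and moreover the paper notes (remark after Lemma~4.4) that for this family all composition factors are relatively unshifted \emph{regardless of} $p$: if one simple is shifted by $j$, all are. So "composition factors spread over several degrees" never occurs, and a naive "single-degree $\Leftrightarrow$ semisimple" test would falsely declare, e.g.\ $\spe{((ke),(e))}$ semisimple when $p\mid k+1$ (Proposition~4.2 shows it is uniserial of length $3$ in that case). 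Knowing graded decomposition numbers in characteristic $p$, even exactly, does not determine the module structure; the paper's indecomposability-of-Weyl-modules argument is precisely what fills this gap. Third, your treatment of the exceptional cases conflates $e=2$ with $p=2$: the isomorphism $\spe{((ke),(je))}\cong\spe{((ke),(1^{je}))}$ in the introduction requires quantum characteristic $e=2$, whereas the exceptional cases in the theorem concern field characteristic $p=2$ for arbitrary $e$, so the Murphy/Speyer hook classification cannot be invoked there.
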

%\end{mainresult}
%}

From this, \cref{thm:generalised k>j ss,cor:generalised k>j ss transpose} give the corresponding result for the more general bihooks of \cref{thm:decompbihooks}, by arguments using graded $i$-induction functors, which were also key in the proof of \cref{thm:decompbihooks}.

Our methods for this use a Morita equivalence of Kleshchev and the first author~\cite{km17}, that allows us to pass the Specht modules $\spe{((ke),(je))}$ to the tensor product of Weyl modules $\Delta(1^k) \otimes \Delta(1^j)$ over the usual Schur algebra $S(k+j,k+j)$ -- see \cref{lem:moritaequiv}.
We are then able to perform a large part of our analysis in the Schur algebra setting, pulling results back through the Morita equivalence.
This analysis on the Schur algebra side is supplemented on the KLR algebra side by some structure that we are able to extract from Specht module homomorphisms that we define along the way.

Recasting our problem in the Schur algebra setting allows us to also gain traction in some of the non-semisimple cases.
In particular we are able to give the complete structure of the Specht modules appearing in \cref{thm:decompbihooks} whenever $j=1$ or $2$, or $p$ divides precisely one of the integers $k+j, k+j-1, \dots, k-j+2$ (see \cref{cor:j=1induced,cor:j=1induced transpose},  \cref{prop:j=2,cor:j=2induced}, and \cref{thm:pdiv1}, respectively).
In most of these cases, our analysis hinges on piecing together Weyl modules that are either simple or have only two composition factors, which simplifies the situation. 
In future work, we hope to consider such situations more generally, though the answer is less clean than our current results, for instance see \cref{eg:5factorsummand}.
Despite not determining their structure in complete generality, along the way we obtain the graded decomposition numbers for all Specht modules indexed by bihooks of the form found in \cref{thm:decompbihooks} -- see the remark after \cref{lem:charchange}.

In \cref{cor:p=2decomposables}, we determine precisely when our Specht modules are decomposable for $j,k>1$ and $p=2$, filling a gap in \cref{thm:decompbihooks}.

In ongoing work, we are utilising the methods and results from this paper to yield presentations and graded dimension formulae for some simple modules in level 2.
In fact, we are studying the simple modules appearing in \cref{thm:k>jss,thm:generalised k>j ss}, under the same conditions.

Our paper is organised as follows.
In \cref{sec:background}, we lay the foundations for our work, recalling the necessary Lie theory setup, tableau-combinatorics, as well as definitions and some properties of KLR algebras and their Specht modules.
In \cref{sec:imag}, we recall work of Kleshchev and the first author \cite{km17a,km17} studying imaginary Schur algebras, and apply their results to our situation.
The main outcome of this section is a Morita equivalence that allows us to largely transport our problem to the classical Schur algebra, where we only need to work with Weyl modules indexed by two-column partitions.
\cref{sec:homs} is devoted to proving \cref{prop:alphahom,prop:gammahom}, which provide homomorphisms between certain Specht modules.
These will provide the driving force for our proofs from one side of the Morita equivalence of \cref{sec:imag}.
Our main results come in \cref{sec:main}, where we use our Morita equivalence to combine information from our Specht module homomorphisms with some known results for Weyl modules (over the Schur algebra) indexed by two-column partitions and obtain our main theorem on semisimple Specht modules, \cref{thm:k>jss}, as well as determine the structures of some non-semisimple Specht modules.

\begin{ack}
%The authors thank Kay Jin Lim for useful comments about \cite{danzlim}, 
The authors thank Kay Jin Lim for useful comments about tilting modules, 
and Matthew Fayers, whose GAP packages were used in LLT computations as well as homomorphism computations.
The authors are also grateful for the support from Singapore MOE Tier 2 AcRF MOE2015-T2-2-003, which funded a research visit of the first two authors to the third at the National University of Singapore.
The second author is partially supported by JSPS Kakenhi grant number 20K22316.
Finally, we thank the referees for carefully reading the paper and suggesting numerous improvements.
\end{ack}

\section{Background}\label{sec:background}

In this section we give an overview of KLR algebras, Specht modules labelled by bihooks, and the associated combinatorics, as in our previous paper.
For our methods here, we will also need to recall some information about Weyl modules over Schur algebras, and their connection to KLR algebras via Imaginary Schur--Weyl duality.
Throughout, $\bbf$ will denote an arbitrary field of characteristic $p\geq 0$.

\subsection{Lie theoretic notation}\label{subsec:Lie}

Let $e\in \{2,3,\dots\} $, which we call the \emph{quantum characteristic}.
We set $I:=\bbz/{e\bbz}$, which we identify with the set $\{0, 1, \dots, e-1\}$.
We let $\Gamma$ be the (type $A_{e-1}^{(1)}$) quiver with vertex set $I$ and an arrow $i\rightarrow{i-1}$ for each $i\in{I}$.

Following Kac's book~\cite{kac}, we recall standard notation for the Kac--Moody algebra associated to the generalised Cartan matrix $(a_{ij})_{i,j\in I}$.
Explicitly, $a_{ij} = 2\delta_{i,j} - \delta_{i,j+1} - \delta_{i,j-1}$.
We have simple roots $\{\alpha_i\mid{i\in{I}}\}$, fundamental dominant weights $\{\La_i \mid i \in I\}$, and the invariant symmetric bilinear form $(\:,\:)$ such that $(\alpha_i, \alpha_j) = a_{i,j}$ and $(\La_i, \alpha_j) = \delta_{i,j}$, for all $i, j \in I$. Let \(\Phi_+\) be the set of positive roots, and let $Q_+:= \bigoplus_{i\in I} \bbz_{\geq 0} \alpha_i$ be the positive cone of the root lattice.
We write \(\delta:= \alpha_0 + \cdots + \alpha_{e-1} \in \Phi_+\) for the {\em null root}.
If $\alpha = \sum_{i\in I} c_i \alpha_i \in Q_+$, then we define the \emph{height of $\alpha$} to be $\het(\alpha) = \sum_{i \in I} c_i$. 

An \emph{$e$-bicharge} is an ordered pair $\kappa = (\kappa_1,\kappa_2)\in{I^2}$.
We define its associated dominant weight $\La$ of level two to be $\La = \La_\kappa := \La_{\kappa_1} + \La_{\kappa_2}$.

\subsection{The symmetric group}

Let $\mathfrak{S}_n$ be the symmetric group on $n$ letters.
We let $s_1,\dots,s_{n-1}$ denote the standard Coxeter generators, where $s_i$ is the simple transposition $(i, i+1)$ for $1\leq i<n$.
We define a \emph{reduced expression} for a permutation $w\in\mathfrak{S}_n$ to be an expression $s_{i_1}\dots s_{i_m}$ such that $m$ is minimal, and call $m$ the length of $w$, denoted $\ell(w)$.

For \(h,n \in \mathbb{N}\), we will let \(S_\bbf(h,n)\) denote the classical Schur algebra over \(\bbf\); see for instance \cite[\S1]{bdk01} for definitions and representation theoretic details.

We define the \emph{Bruhat order} $\leq$ on $\mathfrak{S}_n$ as follows.
If $x, w \in \mathfrak{S}_n$, then we write $x\leq w$ if there is a reduced expression for $x$ which is a subexpression of a reduced expression for $w$.

\subsection{Bipartitions}

A \emph{composition} $\la$ of $n$ is a  sequence of non-negative integers $\la = (\la_1, \la_2, \dots)$ such that $|\la| := \sum \la_i = n$.
We say \(\la\) is a \emph{partition} of $n$ provided that the sequence is weakly decreasing, i.e. \(\la_i \geq \la_j\) for all \(i \leq j\). We write $\varnothing$ for the \emph{empty partition} $(0, 0, \dots)$.
We will denote the set of all compositions of \(n\) by \(\mathscr{C}_n\), and the set of all partitions of \(n\) by \(\mathscr{P}_n\). We also define:
\begin{align*}
\mathscr{C}_n(a) &= \{ \la \textup{ a composition of \(n\)} \mid \la_i = 0 \textup{ for all }i>a\}\\
\mathscr{P}_n(a) &= \{ \la \textup{ a partition of \(n\)} \mid\la_i = 0 \textup{ for all }i>a\}
\end{align*}
for the sets of compositions and partitions of \(n\), respectively, with {\em \(a\) or fewer parts}.

A \emph{bipartition} $\la$ of $n$ is a pair $\la = (\la^{(1)}, \la^{(2)})$ of partitions such that $|\la| = |\la^{(1)}| + |\la^{(2)}| = n$.
We refer to $\la^{(1)}$ and $\la^{(2)}$ as the \emph{$1$st  and $2nd$ component}, respectively, of $\la$.
We abuse notation and also write $\varnothing$ for the \emph{empty bipartition} $(\varnothing, \varnothing)$.
We denote the set of all bipartitions of $n$ by $\mptn 2 n$.

\begin{defn}
We call a bipartition $\la$ a \emph{bihook} if it is of the form $\la = ((a,1^b),(c,1^d))$ for some integers $a,c \geq1$ and $b,d \geq 0$.
\end{defn}

For $\la, \mu \in \mptn 2 n$, we say that $\la$ \emph{dominates} $\mu$, and write $\la \dom \mu$, if for all $k\geq 1$,
\[
\sum_{j=1}^{k} \la_j^{(1)} \geq \sum_{j=1}^{k} \mu_j^{(1)} \text{ and } |\la^{(1)}| + \sum_{j=1}^{k} \la_j^{(2)} \geq |\mu^{(1)}| + \sum_{j=1}^{k} \mu_j^{(2)}.
\]
The \emph{Young diagram} of $\la = (\la^{(1)}, \la^{(2)}) \in \mptn 2 n$ is defined to be
\[
[\la]:= \{ (i,j,m)\in \bbn \times \bbn \times \{1, 2\} \mid 1\leq j \leq \la_i^{(m)}\}.
\]
We refer to elements of $[\la]$ as \emph{nodes} of $\la$.
We draw the Young diagram of a bipartition as a column vector of Young diagrams $[\la^{(1)}], [\la^{(2)}]$ from top to bottom.
We say that a node $A\in[\la]$ is \emph{removable} if $[\la]\setminus \{A\}$ is a Young diagram of a bipartition, while a node $A\not\in[\la]$ is \emph{addable} if $[\la]\cup\{A\}$ is a Young diagram of a bipartition.
We say that the node $(r,c,m)$ is \emph{above} the node $(r',c',m')$ if either $m < m'$ or ($m = m'$ and
$r < r'$), which is equivalent to saying that $(r',c',m')$ is \emph{below} the node $(r,c,m)$.

If $\la$ is a partition, the \emph{conjugate partition}, denoted $\la'$, is defined by
\[
\la_i'=|\left\{j\geq 1\mid\la_j\geq i\right\}|.
\]
If $\la \in \mptn 2 n$, then we define the conjugate bipartition, also denoted $\la'$, to be $\la' = (\la^{(2)'},\la^{(1)'})$.

\subsection{Tableaux}

Let $\la \in \mptn 2 n$.
Then a \emph{$\la$-tableau} is a bijection $\ttt:[\la] \rightarrow \{1, \dots, n\}$.
We depict a $\la$-tableau $\ttt$ by inserting entries $1, \dots, n$ into the Young diagram $[\la]$ with no repeats; we let $\ttt(i,j,m)$ denote the entry lying in node $(i,j,m) \in [\la]$.
We say that $\ttt$ is \emph{standard} if its entries increase down each column and along each row, within each component, and denote the set of all standard $\la$-tableaux by $\std\la$.

The \emph{column-initial tableau} $\ttt_\la$ is the $\la$-tableau where the entries $1,\dots,n$ appear in order down consecutive columns, working from left to right, first in component $2$, then in component $1$.

The symmetric group $\mathfrak{S}_n$ acts naturally on the left on the set of all $\la$-tableaux.
For $\ttt$ a $\la$-tableau, we define the permutation $w_\ttt \in \mathfrak{S}_n$ by $w_\ttt \ttt_\la =\ttt$.

Suppose $\la \in \mptn 2 n$.
Let $\tts$ and $\ttt$ be $\la$-tableaux with corresponding reduced expressions $w_{\tts}$ and $w_{\ttt}$, respectively.
Then we say that \emph{$\ttt$ dominates $\tts$}, written as $\ttt \dom \tts$, if and only if $w_{\ttt}\geq w_{\tts}$.

Fix an $e$-bicharge $\kappa = (\kappa_1,\kappa_2)$.
The \emph{$e$-residue} of a node $A = (i,j,m) \in \bbn\times\bbn\times\{1,2\}$ is defined to be
\[
\res A := \kappa_m+j-i \pmod{e}.
\]
We call a node of residue $r$ an $r$-\emph{node}.

Let $\ttt$ be a $\la$-tableau.
If $\ttt(i,j,m) = r$, we set $\res_\ttt (r)=\res (i,j,m)$.
The \emph{residue sequence} of $\ttt$ is defined to be
\[
\bfi_\ttt = (\res_\ttt (1),\dots,\res_\ttt (n)).
\]
We denote the residue sequence of the column-initial tableau $\ttt_\la$ by $\bfi_\la:=\bfi_{\ttt_\la}$.

We now define the degree and codegree of a standard tableau, as in \cite[\S3.5]{bkw11}.
Note that in \cite{ss20} we did not need the degree, and we thus used the word degree to refer to the codegree.
For $\la \in \mptn 2 n$ and an $i$-node $A$ of $\la$, we define
\begin{align*}
d_A(\la):&=
\#\left\{
\text{addable $i$-nodes of $\la$ below $A$}
\right\}
-\#\left\{
\text{removable $i$-nodes of $\la$ below $A$}
\right\};\\
d^A(\la):&=
\#\left\{
\text{addable $i$-nodes of $\la$ above $A$}
\right\}
-\#\left\{
\text{removable $i$-nodes of $\la$ above $A$}
\right\}.
\end{align*}

Let $\ttt \in \std\la$ with $\ttt^{-1}(n) = A$.
We define the \emph{degree} and \emph{codegree} of $\ttt$, denoted $\deg(\ttt)$ and $\codeg(\ttt)$, recursively, by setting $\deg(\varnothing):=0=:\codeg(\varnothing)$, and  
\[
\deg (\ttt):=d_A(\la)+\deg (\ttt_{<n}), \quad \codeg (\ttt):=d^A(\la)+\codeg (\ttt_{<n}),
\]
where $\ttt_{<n}$ is the standard tableau obtained from $\ttt$ by removing the node $A$.

\subsection{KLR algebras and their cyclotomic quotients}

Suppose $\alpha\in Q^+$ has height $n$, and set
\[
I^\alpha = \lset{\bfi = (i_1, i_2, \dots, i_n)\in I^n}{\alpha_{i_1}+\dots+\alpha_{i_n} = \alpha}.
\]
We define the \emph{Khovanov--Lauda--Rouquier (KLR) algebra} or \emph{quiver Hecke algebra} $\mathscr{R}_\alpha$ to be the unital associative $\bbf$-algebra with generating set
\[
\lset{e(\bfi)}{\bfi \in I^\alpha}\cup\{y_1,\dots,y_n\}\cup\{\psi_1,\dots,\psi_{n-1}\}
\]
and relations
{\allowdisplaybreaks
\begin{align*}
e(\bfi)e(\bfj) &= \delta_{\bfi,\bfj} e(\bfi);\\
\sum_{i \in I^\alpha} e(\bfi) &= 1;\\
y_re(\bfi) &= e(\bfi)y_r;\\
\psi_r e(\bfi) &= e(s_r\bfi) \psi_r;\\
y_ry_s &= y_sy_r;\\
\psi_r y_s &= \mathrlap{y_s\psi_r}\hphantom{\smash{\begin{cases}(\psi_{r+1}\psi_r\psi_{r+1}+1)e(\bfi)\\\\\\\end{cases}}}\kern-\nulldelimiterspace\text{if } s\neq r,r+1;\\
\psi_r \psi_s &= \mathrlap{\psi_s\psi_r}\hphantom{\smash{\begin{cases}(\psi_{r+1}\psi_r\psi_{r+1}+1)e(\bfi)\\\\\\\end{cases}}}\kern-\nulldelimiterspace\text{if } |r-s|>1;\\
y_r \psi_r e(\bfi) &= (\psi_r y_{r+1} - \delta_{i_r,i_{r+1}})e(\bfi);\\
y_{r+1} \psi_r e(\bfi) &= (\psi_r y_r + \delta_{i_r,i_{r+1}})e(\bfi);\\
\psi_r^2 e(\bfi)&=\begin{cases}
\mathrlap0\phantom{(\psi_{r+1}\psi_r\psi_{r+1}+1)e(\bfi)}& \text{if }i_r=i_{r+1},\\
e(\bfi) & \text{if }i_{r+1}\neq i_r, i_r\pm1,\\
(y_{r+1} - y_r) e(\bfi) & \text{if }i_r = i_{r+1} + 1,\\
(y_r - y_{r+1}) e(\bfi) & \text{if }i_r = i_{r+1} - 1;
\end{cases}\\
\psi_r\psi_{r+1}\psi_re(\bfi) &= \begin{cases}
(\psi_{r+1}\psi_r\psi_{r+1}+1)e(\bfi)& \text{if }i_{r+2} = i_r = i_{r+1} +1,\\
(\psi_{r+1}\psi_r\psi_{r+1}-1)e(\bfi)& \text{if }i_{r+2} = i_r = i_{r+1} - 1,\\
(\psi_{r+1}\psi_r\psi_{r+1})e(\bfi)& \text{otherwise;}
\end{cases}
\end{align*}
}for all admissible $r, s, \bfi, \bfj$.
When $e= 2$, we actually have slightly different `quadratic' and `braid' relations, which may be found, for example, in \cite[\S 3.1]{kmr}.
We omit them here, as we will not explicitly calculate with these relations when $e=2$.

Recalling that $\La = \La_\kappa$, we define $\mathscr{R}_\alpha^\La$ by imposing one additional relation on \(\mathscr{R}_\alpha\):
\begin{align*}
y_1^{(\La, \alpha_{i_1})} e(\bfi) &= 0,
\end{align*}
for all \(\bfi \in I^\alpha\).

\begin{lemc}{bk09}{Corollary 1}
There is a unique $\bbz$-grading on $\mathscr{R}_\alpha^{\Lambda}$ such that, for all admissible $r$ and $\bfi$,
\[
\deg(e(\bfi))=0,\quad\deg(y_r)=2,\quad\deg\psi_r(e(\bfi))=-a_{i_r,r_{r+1}}.
\]
\end{lemc}

The \emph{cyclotomic KLR algebra} or \emph{cyclotomic quiver Hecke algebra} $\mathscr{R}_n^\La$ is defined to be the direct sum $\bigoplus_\alpha\mathscr{R}_\alpha^\La$, where the sum is taken over all $\alpha\in Q^+$ of height $n$.

These $\bbz$-graded algebras are connected to the Hecke algebras of type $B$ via (a special case of) Brundan and Kleshchev's \emph{Graded Isomorphism Theorem}.

\begin{thmc}{bkisom}{Main Theorem}
If $e = \nchar(\bbf)$ or $\nchar(\bbf)\nmid{e}$, then $\mathscr{R}_n^\La$ is isomorphic to the integral Hecke algebra $\hhh(\xi,Q_1,Q_2)$ of type $B$ with parameters $\xi \in \bbf$ a primitive $e$th root of unity, $Q_1 = \xi^{\kappa_1}$, and $Q_2 = \xi^{\kappa_2}$.
That is, $\hhh(\xi,Q_1,Q_2)$ has generators $T_0,\dots, T_{n-1}$ satisfying type $B$ Coxeter relations, with the quadratic relations replaced with
\[
(T_0 - \xi^{\kappa_1}) (T_0 - \xi^{\kappa_2}) = 0 \quad \text{and} \quad (T_i-\xi) (T_i+1) = 0 \;\; \forall \; 1\leq i \leq n-1.
\]
\end{thmc}

Finally, we end the subsection by briefly recalling the sign isomorphism $\operatorname{sgn}:\scrr_n^\kappa \rightarrow \scrr_n^{\kappa'}$ from~\cite[\S3.3]{kmr}, where $\kappa'= (-\kappa_2,-\kappa_1)$.
It is the homogeneous algebra isomorphism defined by
\[
\operatorname{sgn}: e(i_1,i_2,\dots,i_n)\mapsto e(-i_1,-i_2,\dots,-i_n), \; y_r \mapsto -y_r, \; \psi_s \mapsto -\psi_s, \; \text{for all } (i_1,i_2,\dots,i_n), r, s.
\]

\subsection{Basic representation theory of KLR algebras}

We begin by recalling some necessary definitions pertaining to graded modules.
If $M$ is any graded $\scrr_n$- (or $\scrr_n^\La$-)module, we let $M\langle r \rangle$ denote the graded module obtained by shifting the grading on $M$ up by $r$; that is, $M\langle r \rangle_d = M_{d-r}$.
We let $M^{\operatorname{sgn}}$ denote the same graded vector space as $M$, with the action of $\scrr_n$ twisted by the sign isomorphism.
For $q$ an indeterminate, the Grothendieck groups of $\scrr_n$ and $\scrr_n^\La$ are $\bbz[q,q^{-1}]$-modules by letting $q^m$ act by a degree shift by $m$.
We let $\underline{M}$ denote the (ungraded) module obtained by forgetting the grading on $M$.
The (graded) dual of $M$ is $M^\circledast := \Hom_\bbf(M,\bbf)$ with $\scrr_n^\La$-action given by $(h\cdot f)(m) = f(\tau(h)m)$ for $f \in M^\circledast$, $m\in M$, $h \in \scrr_n^\La$.
Here, $\tau$ denotes the homogeneous algebra anti-involution of $\scrr_n^\La$ that sends each generator to themselves.

For any graded \(\scrr_\alpha\)-module, and \(\bfi \in I^\alpha\), we write \(M_\bfi := e(\bfi)M\), giving a vector space decomposition \(M = \bigoplus_{\bfi \in I^\alpha} M_\bfi\). If \(M_\bfi \neq 0\), we say that \(\bfi\) is a {\em word} of \(M\). 
The \emph{formal $q$-character} of $M$ is
\[
\operatorname{ch}_q M := \sum_{\bfi \in I^\alpha} \left( \operatorname{qdim} M_{\bfi} \right) \cdot \bfi,
\]
where $\operatorname{qdim} M \in \mathbb{Z}[q,q^{-1}]$ is the \emph{graded dimension} of $M_{\bfi}$. We have \(\operatorname{ch}_q M^\circledast = \overline{\operatorname{ch}_q M}\), where the bar indicates the {\em bar involution}, i.e. the automorphism of \(\mathbb{Z}[q,q^{-1}]\) which swaps \(q\) and \(q^{-1}\), extended linearly to \(\bigoplus_{i \in I^\alpha} \mathbb{Z}[q,q^{-1}] \bfi\).

There is an induction functor $\operatorname{Ind}_{\alpha,\beta}$, which associates to an $\scrr_\alpha$-module $M$ and an $\scrr_\beta$-module $N$ the $\scrr_{\alpha+\beta}$-module $M\circ N := \operatorname{Ind}_{\alpha,\beta} M\boxtimes N $ for $\alpha,\beta\in Q_+$.

\subsection{Specht modules}
The graded $\scrr_n^\La$-modules of prime importance to us are the Specht modules.
We use the following presentation of Specht modules from~\cite[Definition 7.11]{kmr}; we give only the presentations for the two families of bipartitions that we will explicitly need.

\begin{defn}\label{defn:regularspecht}
	Let $\la = ((ke,je-e+1),(e-1)) \in \mptn 2 n$ for some $j,k\in\bbn$.
	The (column) \emph{Specht module} $\spe\la$ is the cyclic $\scrr_n^\La$-module generated by $z_\la$ of degree $\deg(z_\la):= \codeg(\ttt_\la)$ subject to the relations:
	\begin{itemize}
		\item $e(\bfi_\la)z_\la = z_\la$;
		
		\item $y_r z_\la = 0$ for all $r \in \{1, \dots, n\}$;
		
		\item $\psi_r z_\la=0$ for all $r\in\{1, \dots,e-2\} \cup \{e,e+2,e+4,\dots,2je-e\}\cup\{2je-e+2,\dots,ke+je-1\}$;
		
%		\item $\psi_r z_\la = 0$ for all $r \in \{1, \dots,e-2\} \cup \{2je-e+2,\dots,ke+je-1\}$;
		
		\item $\psi_r\psi_{r+1} z_\la=0$ for all $r\in\{e,e+2,e+4,\dots,2je-e\}$;% (Garnir relations from nodes in first row);
		
		\item $\psi_r\psi_{r-1} z_\la=0$ for all $r\in\{e+2,e+4,\dots,2je-e\}$.% (Garnir relations from nodes in second row).
		
	\end{itemize}
	
	Let $\mu = ((ke),(je)) \in \mptn 2 n$ for some $j,k \in\bbn$.
	The (column) \emph{Specht module} $\spe\mu$ is the cyclic $\scrr_n^\La$-module generated by $z_\mu$ of degree $\deg(z_\mu):= \codeg(\ttt_\mu)$ subject to the relations:
	\begin{itemize}
		\item $e(\bfi_\mu)z_\mu = z_\mu$;
		
		\item $y_r z_\mu = 0$ for all $r \in \{1, \dots, n\}$;
		
		\item $\psi_r z_\mu = 0$ for all $r\in \{1,\dots,n-1\}\setminus \{je\}$.	\end{itemize}
\end{defn}

For each $w\in\sss$, we fix a reduced expression $w = s_{i_1} \dots s_{i_m}$ throughout.
We define the associated element of $\mathscr{R}_n^{\Lambda}$ to be $\psi_w:=\psi_{i_1}\dots{\psi_{i_m}}$, which, in general, depends on the choice of reduced expression for $w$.
For $\la \in \mptn 2 n$ and a $\la$-tableau $\ttt$, we define $v_{\ttt}:=\psi_{w_{\ttt}} z_\la$.
Whilst these vectors $v_{\ttt}$ of $\spe\la$ also depend on the choice of reduced expression in general, the following result does not.

\begin{thm}\textbf{\textup{\cite[Corollary 4.6]{bkw11}, \cite[Proposition 7.14 and Corollary 7.20]{kmr}}}\label{thm:basistab}
	For $\la \in \mptn 2 n$, the set of vectors $\{v_{\ttt}\mid{\ttt\in\std\la}\}$ is a homogeneous $\bbf$-basis of $\spe\la$, with $\deg(v_{\ttt}) = \codeg(\ttt)$.
\end{thm}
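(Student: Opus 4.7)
The plan is to establish the three claims (spanning, linear independence, and the degree formula) in that order, following the standard strategy pioneered by Brundan--Kleshchev--Wang and Kleshchev--Mathas--Ram.

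First, I would establish that $\{v_\ttt \mid \ttt \in \std\la\}$ spans $\spe\la$. Since $\spe\la$ is cyclic, generated by $z_\la$, every element is a linear combination of vectors of the form $\psi_w z_\la$ for $w \in \mathfrak{S}_n$. Using the KLR relations together with the idempotent-killing relation $e(\bfi_\la)z_\la = z_\la$, one shows that $\psi_w z_\la$ vanishes unless $w$ permutes only positions carrying residues compatible with the structure of $\bfi_\la$. A straightening argument, run by descending induction on the Bruhat order (or equivalently on the length of $w$), reduces any such $\psi_w z_\la$ modulo lower terms to a linear combination of $v_\ttt$ for standard $\ttt$: the defining relations kill $\psi_r z_\la$ at certain positions directly, and the more delicate relations $\psi_r \psi_{r+1} z_\la = 0$ and $\psi_r \psi_{r-1} z_\la = 0$ (for the first family) function as Garnir-type relations, allowing one to rewrite products that correspond to non-standard tableaux in terms of products coming from tableaux higher in the dominance order.

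Second, for linear independence, the cleanest route is a dimension count. By the Brundan--Kleshchev Graded Isomorphism Theorem, $\scrr_n^\La \cong \hhh(\xi,Q_1,Q_2)$, and one can identify $\spe\la$ (ungraded) with the classical Dipper--James--Murphy Specht module, which is a cell module of the type $B$ Hecke algebra having a cellular basis indexed by $\std\la$. Thus $\dim_\bbf \spe\la = |\std\la|$, so the spanning set from the first step must already be a basis. Alternatively, one can construct an explicit surjection from the abstract module defined by the presentation in \cref{defn:regularspecht} onto the cell module, verify via the cellular basis that the dimension on the target side is $|\std\la|$, and conclude.

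Finally, for the degree formula $\deg(v_\ttt) = \codeg(\ttt)$, I would proceed by induction on $n$. The base case $\ttt = \varnothing$ is trivial, since $\deg(z_\la) = \codeg(\ttt_\la)$ by definition when $\la$ has a single box configuration, and more generally follows directly from the defining choice $\deg(z_\la) := \codeg(\ttt_\la)$. For the inductive step, let $A = \ttt^{-1}(n)$ and let $\ttt_{<n}$ be the truncation. Then $\codeg(\ttt) = d^A(\la) + \codeg(\ttt_{<n})$. On the other hand, $v_\ttt$ is obtained from a vector in the restriction of $\spe\la$ to $\scrr_{n-1}^\La$ by applying a sequence of $\psi_r$'s moving the entry $n$ to its position in $\ttt$, and the total contribution to the degree from these generators equals $d^A(\la)$ by a standard tally of removable and addable $i$-nodes along the way. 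This matches the combinatorial recursion, completing the induction.

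The main obstacle is the straightening step: one must verify carefully that the two Garnir-type relations listed in \cref{defn:regularspecht}, together with the basic KLR relations, really suffice to reduce every non-standard $\psi_w z_\la$ to standard ones without losing any relations that should hold. This is exactly the content that makes the presentation in \cref{defn:regularspecht} correct in the first place, and is the technical core of \cite[\S7]{kmr}. Once one trusts that the relations are complete in the sense of the cellular/graded isomorphism, the rest is bookkeeping via the Bruhat and dominance orders.
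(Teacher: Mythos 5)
The paper does not prove this theorem; it quotes it directly from \cite[Corollary 4.6]{bkw11} and \cite[Proposition 7.14 and Corollary 7.20]{kmr}, so there is no in-paper proof to compare against. Judged against those references, your three-part outline (spanning via straightening, linear independence via a dimension count against the classical cell module, and the degree formula via a box-removal induction) is exactly the strategy used there, and is sound in its broad strokes.

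The one substantive gap is in your spanning step. You anchor it on the relations in \cref{defn:regularspecht}, but that definition is deliberately partial: it records the presentations only for the two families $((ke,je-e+1),(e-1))$ and $((ke),(je))$ that this paper actually needs. \cref{thm:basistab} is stated for arbitrary $\la\in\mptn 2 n$, and the Garnir relations for a general bipartition (the content of \cite[Definition 7.11 and \S7]{kmr}) are considerably more involved than the two commutation-type relations $\psi_r\psi_{r\pm1}z_\la=0$ quoted here; you acknowledge at the end that the full content is in \cite[\S7]{kmr}, but the way the spanning argument is framed suggests \cref{defn:regularspecht} suffices, and it does not once $\la$ leaves those two families. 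Two smaller points of bookkeeping are also glossed over: the spanning argument must handle the $y_r$ generators (a general element of $\scrr_n^\La z_\la$ is a combination of $y_1^{a_1}\cdots y_n^{a_n}\psi_w e(\bfi)z_\la$, not merely $\psi_w z_\la$, so one must show the $y$'s commute past the $\psi$'s and are killed on $z_\la$); and the degree induction needs the branching-rule filtration of $\operatorname{Res}\,\spe\la$, since $v_{\ttt_{<n}}$ naturally lives in $\spe{\la\setminus A}$ rather than being a subvector of $\spe\la$, so the identification that lets $d^A(\la)$ appear is not entirely free. These are handled in the cited references, so they are omissions of detail rather than errors of strategy, but a complete proof cannot skip them.
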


\begin{Assumption}
In light of \cref{thm:decompbihooks}, whose Specht modules are those of interest in the present paper, we will assume that $\kappa=(0,0)$ throughout the remainder of the paper.
\end{Assumption}

We will compute homomorphisms of Specht modules, for which the following lemma will be useful.

\begin{lemc}{bkw11}{Lemma 4.4}\label{lem:tabresi}
	Let $\la \in \mptn 2 n$, and $\ttt \in \std\la$.
	Then $e(\bfi) v_\ttt = \delta_{\bfi,\bfi_\ttt} v_\ttt$.
\end{lemc}

The next lemma serves as a reduction result for analysing our Specht modules.

\begin{lem}\label{lem:switchcomps}
Suppose that $k\geq j$.
Then $\spe{((je),(ke))} \cong \spe{((ke),(je))}^\circledast \langle j+k \rangle$.
\end{lem}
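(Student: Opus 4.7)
Write $\la = ((je),(ke))$, $\mu = ((ke),(je))$ and $n = (k+j)e$. The strategy is to construct an explicit homogeneous map $\varphi: \spe\la \to \spe\mu^\circledast\langle j+k\rangle$ via the universal property of $\spe\la$, and then deduce it is an isomorphism by a graded dimension argument. First I would introduce the ``component-swap'' map $\ttt \mapsto \ttt^\flat$ on standard tableaux (moving every entry to the other component, which yields a standard tableau since each component is a single row). Because $\kappa_1 = \kappa_2 = 0$ and both components of $\mu$ and $\la$ are single rows, the residue of a node $(1,c,m)$ depends only on $c$, so this map is a residue-preserving bijection $\std\mu \to \std\la$. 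Let $\ttt^* := (\ttt_\la)^\flat \in \std\mu$; this is the unique most-dominant standard $\mu$-tableau (entries $1,\dots,ke$ in the first component, $ke+1,\dots,n$ in the second), and $\bfi_{\ttt^*} = \bfi_{\ttt_\la}$.

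The next step is to prove the codegree identity
\[
\codeg\ttt + \codeg\ttt^\flat = j+k \qquad (\ttt \in \std\mu)
\]
by induction on $n$, using the recursive definition of codegree and a case analysis on which component contains the largest entry of $\ttt$. This yields the graded character equality $\operatorname{ch}_q\spe\la = q^{j+k}\,\overline{\operatorname{ch}_q\spe\mu}$, so the two modules in the lemma have equal graded $\bbf$-dimension. Let $f \in \spe\mu^\circledast$ be the linear functional dual to $v_{\ttt^*}$ in the standard basis of \cref{thm:basistab}. Then $e(\bfi_\la)f = f$, and $\deg f = -\codeg\ttt^* = \codeg\ttt_\la - (j+k)$, so $f$ has the same degree as $z_\la$ when viewed as an element of $\spe\mu^\circledast\langle j+k\rangle$.

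The main obstacle is verifying that $f$ satisfies the remaining Specht relations defining $z_\la$ in \cref{defn:regularspecht}: namely $y_rf = 0$ for all $r$, and $\psi_r f = 0$ for $r \in \{1,\dots,n-1\}\setminus\{ke\}$. Since $\tau$ fixes $y_r$ and $\psi_r$, the identity $(h\cdot f)(v_\tts) = f(h v_\tts)$ reduces these to showing that $v_{\ttt^*}$ does not appear in the standard-basis expansion of $y_rv_\tts$ (for any $\tts \in \std\mu$), nor of $\psi_rv_\tts$ (for $r\ne ke$ and any $\tts$). For the $y_r$ relations, commuting $y_r$ past $\psi_{w_\tts}$ via the defining relations gives $y_r \psi_{w_\tts} z_\mu = \psi_{w_\tts} y_{w_\tts^{-1}(r)} z_\mu + (\text{strictly shorter terms}) = (\text{strictly shorter terms})$, since $y_s z_\mu = 0$; hence $y_r v_\tts$ lies in the span of $\{v_{\tts'} : \tts' \triangleleft \tts\}$, which cannot contain $v_{\ttt^*}$ because $\ttt^*$ is maximal in the dominance order on $\std\mu$. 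For the $\psi_r$ relations, an analogous triangularity shows $\psi_r v_\tts$ is the sum of a leading term $v_{s_r\tts}$ (when standard) plus strictly-less-dominant corrections; so a $v_{\ttt^*}$ contribution can arise only when $s_r\tts = \ttt^*$, and for $r\ne ke$ the tableau $s_r\ttt^*$ fails to be standard because $r$ and $r+1$ then lie in the same single-row component of $\ttt^*$. (The exceptional case $r=ke$ is allowed precisely because $s_{ke}\ttt^*$ is standard and strictly less dominant than $\ttt^*$.) Once these relations are established, the universal property of $\spe\la$ produces $\varphi$, which is non-zero, and a standard argument—for example, constructing by the symmetric role of $\la$ and $\mu$ an analogous map $\spe\mu \to \spe\la^\circledast\langle j+k\rangle$, dualizing, and using the graded dimension coincidence—shows that $\varphi$ is an isomorphism.
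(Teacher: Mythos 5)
Your proposal takes a genuinely different route from the paper. The paper's proof is two lines: it matches presentations to get $\spe{((je),(ke))}\langle j+k\rangle \cong \rspe{((ke),(je))}$, and then invokes the row/column duality $\rspe{\mu}\cong\spe{\mu}^{\circledast}\langle 2j+2k\rangle$ from \cite[Theorem 7.25]{kmr}, whereas you try to build the isomorphism by hand, sending $z_\la$ to the functional dual to the most dominant basis vector of $\spe{\mu}$. The idea is reasonable and several ingredients (the component-swap bijection, the residue-preservation, the codegree identity, the observation that $s_r\ttt^*$ is standard only when $r=ke$) look correct. But the argument has a genuine gap at the final step. Having produced a nonzero homogeneous map $\varphi:\spe\la\to\spe\mu^{\circledast}\langle j+k\rangle$ between modules of equal graded dimension does \emph{not} force $\varphi$ to be an isomorphism; you need $\mathrm{im}\,\varphi = \scrr\cdot f$ to be all of $\spe\mu^{\circledast}$, i.e.\ that no nonzero submodule of $\spe\mu$ lies in the ``coefficient-of-$v_{\ttt^*}$ equals zero'' hyperplane, and this is precisely what is not established. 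Your proposed fix — building the analogous map $\spe\mu\to\spe\la^{\circledast}\langle j+k\rangle$ and dualizing — does not help, because dualizing a map $\spe\mu\to\spe\la^{\circledast}\langle j+k\rangle$ yields another map $\spe\la\to\spe\mu^{\circledast}\langle j+k\rangle$, i.e.\ a map in the \emph{same} direction as $\varphi$, so there is nothing to compose with $\varphi$ to get an endomorphism and conclude. The fact that $\spe\mu^{\circledast}$ is cyclic with $v_{\ttt^*}^\circledast$ as a generator is essentially the content of the perfect-pairing statement \cite[Theorem 7.25]{kmr}, which the paper uses directly; without it, your construction only gives a (possibly non-injective, non-surjective) map.

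A secondary, more minor concern: your triangularity claims are asserted rather than proved. The statement that $y_r v_{\tts}$ expands only over $v_{\tts'}$ with $w_{\tts'}<w_{\tts}$ in the Bruhat order requires that every $\psi$-subword of length $\ell(w_\tts)-1$, when straightened against $z_\mu$, lands strictly below $w_\tts$; this is true but should be referenced (it follows from the arguments in \cite[\S4]{bkw11}) rather than taken for granted, since the paper uses Bruhat order on $w_\ttt$ as the dominance order on tableaux and the direction of the triangularity matters. The same remark applies to the $\psi_r$ case, where you additionally need to note that $s_r w_\tts \le w_{\ttt^*}$ with equality only if $\tts = s_r\ttt^*$.

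In short: the explicit-map strategy is workable in spirit but, as written, does not close. If you want to avoid citing \cite[Theorem 7.25]{kmr} entirely, you would need to prove directly that $v_{\ttt^*}^\circledast$ generates $\spe\mu^\circledast$, which amounts to reproving the nondegeneracy of the row/column pairing. Given that the paper's two-step argument (presentation match plus the known duality) is short, correct, and leans on an already-established theorem, the blind reconstruction is considerably more work for the same lemma and, in its current form, falls short at the isomorphism step.
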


\begin{proof}
By examining the presentations, and noting that $\codeg (\ttt_{((je),(ke))}) = k$ while $\deg (\ttt^{((ke),(je))}) = j+2k$, we see that
\[
\spe{((je),(ke))} \langle j+k \rangle \cong \rspe{((ke),(je))}.
\]
The definitions of the `row-initial tableau' $\ttt^{((ke),(je))}$ and the `row Specht module' $\rspe{((ke),(je))}$ appearing here may be found in \cite[Sections~2.3~and~5.4]{kmr}. 
Next, \cite[Theorem 7.25]{kmr} and an easy degree calculation gives us that
\begin{equation}\label{eq:duals}
\rspe{((ke),(je))} \cong \spe{((ke),(je))}^\circledast \langle 2j+2k\rangle,
\end{equation}
and hence that
\[
\spe{((je),(ke))} \cong \spe{((ke),(je))}^\circledast \langle j+k \rangle.\qedhere
\]
\end{proof}

\subsection{Regular bipartitions and simple modules}

Let $\la \in \mptn 2 n$.
We define the \emph{$i$-signature of $\la$} by reading the Young digram $[\la]$ from the top of the first component down to the bottom of the last component, writing a $+$ for each addable $i$-node and a $-$ for each removable $i$-node.
We obtain the \emph{reduced $i$-signature of $\la$} by successively deleting all adjacent pairs $+-$ from the $i$-signature of $\la$, always of the form $-\dots-+\dots+$.

The removable $i$-nodes corresponding to the $-$ signs in the reduced $i$-signature of $\la$ are called the \emph{normal} $i$-nodes of $\la$, while the addable $i$-nodes corresponding to the $+$ signs in the reduced $i$-signature of $\la$ are called the \emph{conormal} $i$-nodes of $\la$.
The lowest normal $i$-node of $[\la]$, if there is one, is called the \emph{good} $i$-node of $\la$, which corresponds to the last $-$ sign in the $i$-signature of $\la$.
Analogously, the highest conormal $i$-node of $[\la]$, if there is one, is called the \emph{cogood} $i$-node of $\la$, which corresponds to the first $+$ sign in the $i$-signature of $\la$.

We say that a bipartition $\la \in \mptn 2 n$ is \emph{regular}, or \emph{conjugate-Kleshchev}, if $[\la]$ can be obtained by successively adding cogood nodes to $\varnothing$.
That is, we have a sequence $\varnothing = \la(0), \la(1), \dots, \la(n) = \la$ such that $[\la(i)] \cup\{A\} = [\la(i+1)]$, where $A$ is a cogood node of $\la(i)$.
Equivalently, $\la$ is regular if and only if $\varnothing$ can be obtained by successively removing good nodes from $[\la]$.
Observe in level one that the set of all regular partitions coincides with the set of all $e$-regular partitions.

For each regular bipartition $\la \in \mptn 2 n$, the Specht module $\spe\la$ has a simple head, denoted by $\D\la$.

\begin{thmc}{bk09}{Theorem 5.10}
The modules $\{\D\la \mid \la \in \mptn 2 n, \ \la \text{ regular}\}$ give a complete set of graded simple $\scrr_n^\La$-modules up to isomorphism and grading shift.
Moreover, each $\D\la$ is self-dual as a graded module.
\end{thmc}

There is a bijection $m_{e,\kappa}:\mptn 2 n \rightarrow \mptn 2 n$ such that $(\D\la)^{\operatorname{sgn}} \cong \D{m_{e,\kappa}(\la)}$.
Strictly speaking, the sign map actually takes a simple $\scrr^{\La_\kappa}_n$-module to a $\scrr^{\La_{-\kappa}}_n$-module, but we may ignore this here as we only consider $\kappa = (0,0)$, so that the sign map really permutes simple $\scrr^{\La_\kappa}_n$-modules.
This higher level analogue of the Mullineux involution was introduced by Fayers in~\cite[\S2]{fay08} (see also~\cite{jl09}).
More precisely, if $\la \in \mptn \kappa n$ is obtained from $\varnothing$ by consecutively adding cogood nodes with residues $i_1, i_2, \dots, i_n$, respectively, then $m_{e,\kappa}(\la)$ is the multipartition obtained from $\varnothing$ by consecutively adding cogood nodes of residues $-i_1, -i_2, \dots, -i_n$, respectively.

\subsection{Divided power functors}\label{subsec:divpowers}

Let $\varphi_i(\la)$ denote the number of conormal $i$-nodes of a multipartition $\la$, and $\tilde {f_i} \la$ the partition obtained from $\la$ by adding the cogood $i$-node.
In \cite[Sections 4.4 and 4.6]{bk09}, graded induction functors and graded divided power functors are introduced.
Here, we denote these by $f_i$ and $f_i^{(n)}$, respectively.
We refer to \cite{bk09} for further details, and here only mention some key facts that we will require later.

For a non-negative integer $n$, we define the \emph{quantum integer} $[n]:= q^{-(n-1)} + q^{-(n-3)} +\dots + q^{n-1}$ and the \emph{quantum factorial} $[n]! := [1] [2] \dots [n]$.

\begin{lemc}{bk09}{Lemma 4.8, Theorem 4.12}\label{lem:grbranch}
There is an isomorphism $f_i^{n} \cong [n]! f_i^{(n)}$.
Moreover, $f_i \D\la$ is non-zero if and only if $\varphi_i(\la) \neq 0$, in which case $f_i \D\la$ has irreducible socle isomorphic to $\D{\tilde {f_i} \la}\langle \varphi_i(\la) - 1 \rangle$ and head isomorphic to $\D{\tilde {f_i} \la}\langle 1 - \varphi_i(\la) \rangle$.
\end{lemc}

\subsection{Irreducible Weyl modules over the Schur algebra}\label{sec:irrSpechts}

It will be useful for us to know which Weyl modules (over the usual Schur algebra -- i.e.~level 1,  with $e=p$) $\Delta(\la)$ indexed by two-column partitions are irreducible.
In such cases, $\Delta(\la)$ is equal to its simple head, $\sL{\la}$.
Here, by Weyl modules, we mean the Weyl modules used in \cite{Mathas}.
The Schur functor maps these modules to `row Specht modules'.
Because of the distinction between level 1 and level 2 cases, this will not be problematic for us.
For $(a,b) \in [\la]$, we denote by $h^\la(a,b)$ its hook length, i.e.~$h^\la(a,b) = \la_a - b + \la_b' - a + 1$.
For a prime $p$, define $\nu_p(h)$ to be the largest power of $p$ dividing $h$.

\begin{thmc}{Mathas}{Theorem 5.42}\label{thm:irredweyl}
Let $\la \vdash n$, and let $\bbf$ be a field of characteristic $p$.
The Weyl module $\Delta(\la)$ over the Schur algebra $S(n,n)$ is irreducible if and only if $\nu_p(h^\la(a,b)) = \nu_p(h^\la(a,c))$ for every $(a,b), (a,c) \in [\la]$.
\end{thmc}

\begin{cor}\label{cor:irredweyls}
Let $e=p$.
\begin{enumerate}[label=(\roman*)]
	\item If $p\neq 2$ and $n\geq 2j$, the Weyl modules $\Delta{(1^{n})}, \Delta{(2,1^{n-2})}, \dots, \Delta{(2^j,1^{n-2j})}$ are all simultaneously irreducible if and only if $p$ does not divide any of the integers $n, n-1, \dots, n-2j+2$.
	
	\item If $p=2$, then the Weyl modules:
		\begin{enumerate}
			\item $\Delta{(1^{n})}$ and $\Delta{(2,1^{n-2})}$ are simultaneously irreducible if and only if $n$ is odd;
			
			\item $\Delta{(1^{n})}$ and $\Delta{(2,1^{n-2})}$, and $\Delta{(2^2,1^{n-4})}$ are simultaneously irreducible if and only if $n\equiv 3 \pmod 4$;
			
			\item $\Delta{(1^{n})}$ and $\Delta{(2,1^{n-2})}$, $\Delta{(2^2,1^{n-4})}$, and $\Delta{(2^3,1^{n-6})}$ are never simultaneously irreducible.
		\end{enumerate}
	
\end{enumerate}
%If $p\neq 2$ and $n\geq 2j$, the Weyl modules $\Delta{(1^{n})}, \Delta{(2,1^{n-2})}, \dots, \Delta{(2^j,1^{n-2j})}$ are all simultaneously irreducible if and only if $p$ does not divide any of the integers $n, n-1, \dots, n-2j+2$.
%If $p=2$, then: the Weyl modules $\Delta{(1^{n})}$ and $\Delta{(2,1^{n-2})}$ are simultaneously irreducible if and only if $n$ is odd;
%the Weyl modules $\Delta{(1^{n})}$ and $\Delta{(2,1^{n-2})}$, and $\Delta{(2^2,1^{n-4})}$ are simultaneously irreducible if and only if $n\equiv 3 \pmod 4$;
%the Weyl modules $\Delta{(1^{n})}$ and $\Delta{(2,1^{n-2})}$, $\Delta{(2^2,1^{n-4})}$, and $\Delta{(2^3,1^{n-6})}$ are never simultaneously irreducible.
\end{cor}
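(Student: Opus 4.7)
The plan is to apply the hook-length irreducibility criterion of \cref{thm:irredweyl} to each two-column partition $\la = (2^a, 1^{n - 2a})$. A direct inspection shows that row $r$ of $\la$ has a single node for $r > a$ (where the criterion is vacuous), while for $1 \leq r \leq a$ the two hook lengths are $h^\la(r, 1) = n - a - r + 2$ and $h^\la(r, 2) = a - r + 1$. Setting $s := a - r + 1$, the criterion becomes
\[
\Delta(2^a, 1^{n - 2a}) \text{ is irreducible} \iff \nu_p(n - 2a + 1 + s) = \nu_p(s) \quad \text{for all } s = 1, \dots, a.
\]
Since $(1^n)$ has a single column, $\Delta(1^n)$ is automatically irreducible and contributes no restriction.

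For part~(i), with $p$ odd, I would verify both directions of the claimed equivalence. For $(\Leftarrow)$, if $p$ avoids the $2j - 1$ consecutive integers $n, n - 1, \dots, n - 2j + 2$, then since $p$ is odd this forces $p \geq 2j + 1$. Hence $\nu_p(s) = 0$ for every $s \in \{1, \dots, j\}$, and $n - 2a + 1 + s$ always lies in the coprime range $\{n - 2j + 2, \dots, n\}$ for $1 \leq s \leq a \leq j$, so both sides of the criterion vanish. For $(\Rightarrow)$, fixing $s = 1$ and letting $a$ run over $1, \dots, j$ yields $p \nmid n, n - 2, \dots, n - 2j + 2$, and fixing $s = 2$ (valid since $\nu_p(2) = 0$) with $a$ running over $2, \dots, j$ yields $p \nmid n - 1, n - 3, \dots, n - 2j + 3$. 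Together these exhaust the claimed range.

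Part~(ii), with $p = 2$, is a short case analysis. The $a = 1$ case requires $\nu_2(n) = 0$, i.e.\ $n$ odd, proving~(a). Assuming $n$ is odd, the new condition from $a = 2$, $s = 2$ is $\nu_2(n - 1) = \nu_2(2) = 1$, equivalent to $n \equiv 3 \pmod{4}$, while the other $a = 2$ condition $\nu_2(n - 2) = 0$ is automatic; this establishes~(b). For~(c), the additional condition from $a = 3$, $s = 2$ is $\nu_2(n - 3) = 1$, equivalent to $n \equiv 1 \pmod{4}$, which is incompatible with $n \equiv 3 \pmod{4}$.

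The only mildly subtle step is the observation that in part~(i), requiring $p$ to avoid a run of $2j - 1$ consecutive integers implicitly forces $p \geq 2j + 1$, so that the factors $\nu_p(s)$ on the right-hand side of the criterion automatically vanish; everything else reduces to routine arithmetic with $p$-adic valuations.
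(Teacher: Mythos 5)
Your proof is correct, and it relies on the same central tool as the paper's proof, namely the hook-length criterion of \cref{thm:irredweyl} applied to two-column partitions. The only organizational difference is that the paper proceeds by induction on $j$ (and has to argue at each step that $p > 2j-3$ to control valuations in the second column), whereas you write down the full criterion for $\Delta(2^a,1^{n-2a})$ at once, namely $\nu_p(n-2a+1+s)=\nu_p(s)$ for $1\leq s\leq a$, and then separate the direction $(\Rightarrow)$ into the $s=1$ and $s=2$ sweeps over $a$. This is a cleaner way to see exactly which integers among $n,\dots,n-2j+2$ are being tested, and both arguments hinge on the same observation you flag at the end: avoiding $2j-1$ consecutive integers forces $p\geq 2j+1$, so $\nu_p(s)=0$ for all $s\leq j$. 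The $p=2$ case analysis matches the paper's verbatim in substance.
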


\begin{proof}
We proceed by induction on $j$, with some special care needed for small $j$.
If $j=1$, we want to know when $\Delta(1^{n})$ and $\Delta(2,1^{n-2})$ are simultaneously irreducible.
The former module is always irreducible, and the latter is irreducible if and only if $p\nmid n$.
If $p\mid n$, then $\Delta(2,1^{n-2})$ is comprised of a simple head $\sL{2,1^{n-2}}$, and a simple socle $\sL{1^{n}}$.

If $j=2$, we use the fact that $\Delta(1^{n})$ and $\Delta(2,1^{n-2})$ are simultaneously irreducible if and only if $p\nmid n$, and check when $\Delta(2^2,1^{n-4})$ is also irreducible.
We apply \cref{thm:irredweyl}, noting that we only need to check the valuations of hook lengths in the first two rows.
The nodes $(1,2)$ and $(2,2)$ of $[(2^2,1^{n-4})]$ have hook lengths $2$ and $1$, respectively.
In characteristic $p\neq 2$, both give $p$-adic valuations 0.
The nodes $(1,1)$ and $(2,1)$ have hook lengths $n-1$ and $n-2$, respectively.
Thus if $p\neq2$, $\Delta(2^2,1^{n-4})$ is irreducible if and only if $p\nmid n-1,n-2$.
If $p=2$, then $\nu_p(h^\la(1,2)) = 1$, so one easily checks that $\Delta(2^2,1^{n-4})$ is irreducible if and only if $n\equiv 3 \pmod 4$.
If $n\equiv 3 \pmod 4$, then one can easily see that $\Delta{(2^3,1^{n-6})}$ is reducible, completing the proof when $p=2$.

We now assume that $p\neq 2$, $j>2$, and that the Weyl modules $\Delta{(1^{n})}, \Delta{(2,1^{n-2})}, \dots, \Delta{(2^{j-1},1^{n-2j+2})}$ are all simultaneously irreducible if and only if $p$ does not divide any of the integers $n, n-1, \dots, n-2j+4$.
In particular, since these are $2j-3$ consecutive integers, we can assume that if the aforementioned Weyl modules are all irreducible, then $p>2j-3$.
We need to check when $\Delta{(2^j,1^{n-2j})}$ is irreducible.
Since $p>2j-3$, the $p$-adic valuations in the second column of $[(2^j,1^{n-2j})]$ are all $0$.
The hook lengths in nodes $(1,1),(2,1),\dots,(j,1)$ are $n-j+1, n-j, \dots, n-2j+2$, respectively, and the result follows from \cref{thm:irredweyl}.
\end{proof}

Next, we introduce a modular version of the Pieri rule.
James~\cite{j77} proves a more general version of this (the Littlewood--Richardson rule) for the symmetric group, and outlines how a version for Weyl modules over the general linear group follows.
Since these Weyl modules descend to our Weyl modules for the Schur algebra, we thus have the result below.

\begin{lemc}{j77}{Sections 10 \& 11}\label{lem:Pierifilt}
Let $k, j\geq 1$.
The tensor product of Weyl modules $\Delta(1^k) \otimes \Delta(1^j)$ has a filtration by the Weyl modules $\Delta{(1^{k+j})}, \Delta{(2,1^{k+j-2})}, \dots, \Delta{(2^j,1^{k-j})}$.
\end{lemc}

We end this subsection with a result computing the composition factors of Weyl modules indexed by two-column partitions.
This result will be useful to us in \cref{sec:main}.
First we must introduce some notation.
Suppose that $a$ and $b$ are positive integers with $p$-adic expansions
\[
a = a_0 + p a_1 + p^2 a_2 + \cdots \quad \text{ and } \quad b = b_0 + p b_1 + p^2 b_2 + \cdots
\]
with $0 \leq a_i, b_i < p$ for all $i$.
Write $a \preccurlyeq_p b$ if, for all $i$ either $a_i = 0$ or $a_i = b_i$.

\begin{thmc}{Mathas}{Section~6.4, Rule 15}\label{thm:decompnos}
Let $\la = (2^{m},1^{n-2m})$ and $\mu = (2^{j},1^{n-2j})$.
Then
\[
[\Delta(\la):\sL\mu] = \begin{cases}
1 & \text{if } \lfloor \frac{m-j}{p} \rfloor \preccurlyeq_p \lfloor \frac{n-2j+1}{p} \rfloor  \text{ and either } p\mid m{-}j \text{ or } p\mid n{-}m{-}j{+}1,\\
0 & \text{otherwise.} 
\end{cases}
\]
\end{thmc}

\section{Imaginary Specht modules}\label{sec:imag}

\subsection{The imaginary Schur algebra}\label{imagSchSec}

Here we import some useful results from~\cite{km17}.
Let \(\succcurlyeq\) be a balanced convex preorder on \(\Phi_+\) (see for instance \cite[\S3.1]{klesh14}).
Recall from \cref{subsec:Lie} that $\delta = \alpha_0 + \dots + \alpha_{e-1}$ denotes the null root.
By \cite[Lemma 5.1, Corollary 5.3]{klesh14}, there exists a unique simple \(\mathscr{R}_\delta\)-module \(\operatorname{L}_{\delta, e-1}\) such that for all words \(\bfj = (j_1, \ldots, j_e)\) in \(\operatorname{L}_{\delta, e-1}\), we have \(j_1 = 0\) and \(j_e = e-1\).
The Specht module \(\spe{(e)}\) is a simple one-dimensional \(\scrr_\delta\)-module with character \(\bfi = (0,1,2, \ldots, e-1)\).
Thus it follows that \(\spe{(e)} \cong \operatorname{L}_{\delta, e-1}\).

For \(\nu = (n_1, \ldots, n_a) \in \mathscr{C}_n(a)\) define the associated {\it Gelfand--Graev words}  in \(I^{n \delta}\) via
\[
\bgg^{(n)}=0^n1^n \dots (e-1)^n,
\qquad \qquad
\bgg^\nu = \bgg^{(n_1)} \dots \bgg^{(n_a)}.
\]
In this notation, we collect identical terms so that for example \(0^n\) is used in place of a length $n$ string of zero residues.
We also write \(\bfi^k\) to mean the concatenation of \(k\) copies of \(\bfi\).
We also define
\[
c(\nu) = ([n_1]! \dots [n_a]!)^e.
\]

Following \cite{km17}, for \(n \in \bbn\), denote the \(\scrr_{n \delta}\)-module
\(
M_n := \operatorname{L}_{\delta, e-1}^{\circ n},
\)
the {\it imaginary tensor space module (of colour \(e{-}1\))} and the finite-dimensional quotient algebra \(\mathscr{S}_n := \scrr_{n \delta} / \textup{Ann}_{\scrr_{n \delta}} M_n\) the {\it imaginary Schur algebra}. This quotient algebra earns its name via a Morita equivalence with the classical Schur algebra \(S_\bbf(n,n)\).
We view the Schur algebra \(S_\bbf(n,n)\) as a graded algebra concentrated in degree \(0\), and thus let \(S_\bbf(n,n)\textup{-mod}\) denote the category of \emph{graded} \(S_\bbf(n,n)\)-modules.
We will freely consider \(\mathscr{S}_n\)-modules as \(\scrr_{n\delta}\)-modules via inflation.
\cref{MainKMimag} below is a special case of \cite[Theorems 4 \& 5]{km17}.

\subsection{Imaginary Schur--Weyl duality}\label{imagSWdSec}

\begin{thmc}{km17}{Theorems 4 \& 5}\label{MainKMimag}
Let \(h \geq n \in \bbn\).
There exists a projective generator \(Z\) for \(\mathscr{S}_n\) such that \(\End_{\mathscr{S}_n}(Z) \cong S_\bbf(h,n)\), and considering \(Z\) as a right \(S_\bbf(h,n)\)-module, we have \(\End_{S_\bbf(h,n)}(Z) \cong \mathscr{S}_n\).
This gives mutually inverse equivalences of categories:
\begin{align*}
&\scrm_{h,n} : \mathscr{S}_n\textup{-mod} \to S_\bbf(h,n)\textup{-mod}, \qquad V \mapsto \Hom_{\mathscr{S}_n}(Z,V)\\
&\widehat\scrm_{h,n} : S_\bbf(h,n)\textup{-mod} \to \mathscr{S}_n\textup{-mod}, \qquad W \mapsto Z \otimes_{S_\bbf(h,n)} W.
\end{align*}
This equivalence intertwines induction in the imaginary Schur algebra and the tensor product in the classical Schur algebra, in the following sense.
Let \(\nu = (n_1, \ldots, n_a) \in  \mathscr{C}_n(a)\).
We have an isomorphism of functors:
\begin{align*}
\textup{Ind}_{n_1\delta, \ldots, n_a\delta}^{n\delta} ( \widehat\scrm_{h,n_1}( ?) \boxtimes \cdots \boxtimes \widehat\scrm_{h,n_a}(?))
\cong
\widehat\scrm_{h,n} ( ? \otimes \cdots \otimes ?)
\end{align*}
from \(S_\bbf(h,n_1)\textup{-mod} \times \cdots \times S_\bbf(h,n_a)\textup{-mod}\) to \( \mathscr{S}_n\textup{-mod}\).
\end{thmc}

In this paper we use the Morita equivalence above as a black box, and will not need to be concerned with the specifics of the projective generator \(Z\).
We will write \(\scrm_n\) for \(\scrm_{n,n}\), and \(\scrm\) for \(\scrm_n\) when \(n\) is clear from context, and similarly for \(\widehat\scrm\).

Let \(h \geq n\). Recall (see \cite{Donkin}) that \(S_\bbf(h,n)\) is a finite-dimensional quasi-hereditary algebra with irreducible, standard, and indecomposable tilting modules
\begin{align*}
\operatorname{L}^h(\la), \;\; \Delta^h(\la), \;\; \operatorname{T}^h(\la), \qquad (\la \in \scrp_n).
\end{align*}
We will omit the superscript \(h\) in the situation that \(h =n\).

Via \cref{MainKMimag} we have that \(\mathscr{S}_n\) is itself a finite-dimensional quasi-hereditary algebra with irreducible, standard, and indecomposable tilting modules
\begin{align*}
\widehat\scrm(\operatorname{L}(\la)), \;\; \widehat\scrm(\Delta(\la)), \;\;\widehat\scrm(\operatorname{T}(\la)), \qquad (\la \in \mathscr{P}_n).
\end{align*}

\begin{lemc}{km17}{Lemma 6.1.3}\label{hirrev}
Let \(h\geq n\), and \(\la \in \mathscr{P}_n\). Then we have \(\widehat\scrm_{h,n}(\operatorname{L}^h(\la)) \cong \widehat\scrm(\sL{\la}) \) and  \(\widehat\scrm_{h,n}(\Delta^h(\la)) \cong \widehat\scrm(\Delta(\la)) \).
\end{lemc}

\begin{lemc}{km17}{Lemma 6.3.2}\label{dualsame}
Let \(\la \in \mathscr{P}_n\).
Then we have that \( \widehat{\scrm}(\operatorname{L}(\la))^\circledast \cong \widehat{\scrm}( \operatorname{L}(\la)) \).
\end{lemc}

Since the characters of images of simple \(S_{\bbf}(n,n)\)-modules under \(\widehat{\scrm}\) are bar-invariant, we have the following immediate result.

\begin{cor}\label{barimage}
Let \(W \in S_\bbf(n,n)\textup{-mod}\). Then 
\begin{align*}
\operatorname{ch}_q \widehat{\scrm}(W) = \overline{\operatorname{ch}_q \widehat{\scrm}(W)} = \operatorname{ch}_q \widehat{\scrm}(W)^\circledast.
\end{align*}
\end{cor}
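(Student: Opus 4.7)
The plan is to reduce to the case of simple modules, where the result becomes a direct consequence of the self-duality statement in \cref{dualsame}.

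First, I would observe that the second equality is completely general: for any graded \(\scrr_{n\delta}\)-module \(N\), we have \(\operatorname{ch}_q N^\circledast = \overline{\operatorname{ch}_q N}\), a fact already recorded in the paper just after the definition of the formal \(q\)-character. Consequently, it suffices to prove bar-invariance of \(\operatorname{ch}_q \widehat{\scrm}(W)\) for every \(W\in S_\bbf(n,n)\textup{-mod}\).

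Next, I would use the fact that \(\widehat{\scrm}\) is an equivalence of categories, hence exact, and that the formal \(q\)-character is additive on short exact sequences of graded modules. Since \(S_\bbf(n,n)\) is concentrated in degree \(0\), the simple \(S_\bbf(n,n)\)-modules are precisely the \(\operatorname{L}(\la)\) for \(\la \in \mathscr{P}_n\), with no nontrivial grading shifts to consider. Thus any \(W \in S_\bbf(n,n)\textup{-mod}\) admits a finite composition series whose factors are among these \(\operatorname{L}(\la)\), and \(\widehat{\scrm}(W)\) correspondingly admits a composition series whose factors are the \(\widehat{\scrm}(\operatorname{L}(\la))\). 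Additivity then gives
\[
\operatorname{ch}_q \widehat{\scrm}(W) = \sum_{\la} [W : \operatorname{L}(\la)] \cdot \operatorname{ch}_q \widehat{\scrm}(\operatorname{L}(\la)),
\]
so bar-invariance of \(\operatorname{ch}_q \widehat{\scrm}(W)\) reduces to bar-invariance of each \(\operatorname{ch}_q \widehat{\scrm}(\operatorname{L}(\la))\).

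Finally, for the simple case, I would invoke \cref{dualsame}, which gives \(\widehat{\scrm}(\operatorname{L}(\la))^\circledast \cong \widehat{\scrm}(\operatorname{L}(\la))\). Combining this isomorphism with the duality formula for \(q\)-characters yields
\[
\operatorname{ch}_q \widehat{\scrm}(\operatorname{L}(\la)) = \operatorname{ch}_q \widehat{\scrm}(\operatorname{L}(\la))^\circledast = \overline{\operatorname{ch}_q \widehat{\scrm}(\operatorname{L}(\la))},
\]
completing the argument. There is no real obstacle here; the main point is simply to package the exactness of the Morita equivalence together with self-duality of the simple images. The only minor subtlety to keep track of is that since we are working with graded modules and the grading on \(S_\bbf(n,n)\) is trivial, no grading shifts enter the composition series and additivity of \(\operatorname{ch}_q\) transfers cleanly through \(\widehat{\scrm}\).
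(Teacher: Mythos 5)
Your proof is correct and takes essentially the same route as the paper's one-sentence justification: the paper observes that characters of $\widehat{\scrm}(\operatorname{L}(\la))$ are bar-invariant (a direct consequence of \cref{dualsame}, as you note), and then declares the corollary "immediate"; you have simply unpacked the implicit composition-series/additivity argument behind "immediate." Both the paper and your write-up implicitly treat $W$ as concentrated in degree $0$ (otherwise a grading shift $\operatorname{L}(\la)\langle r\rangle$ would spoil bar-invariance), but since that convention is baked into how the paper views $S_\bbf(n,n)\textup{-mod}$ and into all the later applications of this corollary, your proof matches the intended argument.
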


Knowledge of the usual formal character \(\operatorname{ch} W\) of a module \(W\) over the classical Schur algebra \(S_\bbf(n,n)\) (i.e., the dimension of the usual weight spaces \(W_\la = e(\la) W\) for \(\la \in \mathscr{C}_n(n)\), provides partial information about the graded character of \(\scrm(W)\):

\begin{thmc}{km17}{Theorem 9}\label{KMWords1}
Let \(\la \in \mathscr{C}_n(n)\) and \(W \in S_\bbf(n,n)\textup{-mod}\). Then
\begin{align*}
\operatorname{qdim} \widehat{\scrm}(W)_{\bgg^\la} = c(\la) \dim W_\la.
\end{align*}
\end{thmc}

For \(\la \in \mathscr{P}_n, \mu \in \mathscr{C}_n(n)\), let \(K_{\la, \mu}\) denote the usual Kostka number; the dimension of the \(\mu\)-weight space \(\Delta(\la)_\mu\) in the standard module \(\Delta(\la)\), given by the number of semistandard \(\la\)-tableaux of weight \(\mu\).
Let \(k_{\la, \mu}\) denote the dimension of the \(\mu\)-weight space \(\sL{\la}_\mu\) in the simple module \(\sL{\la}\).
If \(\textup{char} \;\bbf = 0\), then \(K_{\la,\mu} = k_{\la, \mu}\).
The following theorem, a special case of \cite[Theorem 10]{km17}, provides important partial information about the character of these modules:

\begin{thmc}{km17}{Theorem 10}\label{KMwords}%\cite{km17}
For \(\la \in \mathscr{P}_n, \mu \in \mathscr{C}_n(n)\), we have
\begin{align*}
\dim \widehat\scrm(\Delta(\la))_{\bgg^\mu} = c(\mu) K_{\la, \mu}
\qquad
\textup{and}
\qquad
\dim \widehat\scrm(\sL{\la})_{\bgg^\mu} = c(\mu) k_{\la, \mu}.
\end{align*}
\end{thmc}

\begin{cor}\label{wordrec}
If \(\operatorname{L}\) is a simple \(\mathscr{S}_n\)-module, and there exists \(\la \in \mathscr{P}_n\) such that \(\dim \operatorname{L}_{\bgg^\la} >0\) and \(\dim \operatorname{L}_{\bgg^\mu} =0\) for all \(\mu \in \mathscr{P}_n\) with \(\mu \doms \la\), then \(\operatorname{L} \cong \widehat\scrm(\sL{\la})\).
\end{cor}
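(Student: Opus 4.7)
The plan is to use the Morita equivalence of \cref{MainKMimag} together with the character information provided by \cref{KMwords} to pin down the index of $L$. Since $\widehat{\scrm}$ induces a bijection between isomorphism classes of simple $S_\bbf(n,n)$-modules and simple $\mathscr{S}_n$-modules, there exists some $\nu \in \mathscr{P}_n$ with $L \cong \widehat{\scrm}(\sL{\nu})$, and the task is to show $\nu = \la$.

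First, I would recall the standard fact that the simple Schur algebra module $\sL{\nu}$ has highest weight $\nu$: concretely, $k_{\nu,\nu} = \dim \sL{\nu}_\nu = 1$, and $k_{\nu,\mu} = 0$ whenever $\mu \not\leq \nu$ in the dominance order. Feeding this through \cref{KMwords} gives
\[
\dim \widehat{\scrm}(\sL{\nu})_{\bgg^\nu} = c(\nu) > 0, \qquad \dim \widehat{\scrm}(\sL{\nu})_{\bgg^\mu} = c(\mu) k_{\nu,\mu} = 0 \text{ for } \mu \not\leq \nu.
\]

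Next, I would combine this with the hypotheses on $L$. From $\dim L_{\bgg^\la} > 0$ we get $k_{\nu, \la} \neq 0$, which forces $\la \leq \nu$ in the dominance order. If $\la \neq \nu$, then $\nu \doms \la$ strictly, and the second hypothesis gives $\dim L_{\bgg^\nu} = 0$. But this contradicts $\dim L_{\bgg^\nu} = c(\nu) > 0$ from the previous paragraph. Hence $\nu = \la$, and $L \cong \widehat{\scrm}(\sL{\la})$.

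I do not foresee a serious obstacle here: the argument is essentially the standard extraction of a highest-weight label from weight-space data, transported across the Morita equivalence via \cref{KMwords}. The only subtlety worth flagging is being careful that the dominance order used in the hypothesis matches the one governing weights of simple $S_\bbf(n,n)$-modules, but this is the standard convention under which $\sL{\nu}$ has $\nu$ as its unique maximal weight.
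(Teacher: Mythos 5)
Your argument is correct and follows essentially the same route as the paper: both invoke the Morita equivalence to identify $L$ with some $\widehat{\scrm}(\sL\nu)$, use the highest-weight characterisation of simple $S_\bbf(n,n)$-modules, and transport the weight-space information across the equivalence via \cref{KMwords}. Your write-up merely unwinds the same logic a bit more explicitly than the paper's one-line version.
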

\begin{proof}
Simple modules for \(S_\bbf(n,n)\) are distinguished by their highest weights (see for instance \cite[Theorem 3.5a]{Green}), so if \(\operatorname{L}'\) is a simple \(S_\bbf(n,n)\)-module such that \(\dim \operatorname{L}'_\la >0\) and \(\dim \operatorname{L}'_\mu\) = 0 for all \(\mu \doms \la\), it follows that \(\operatorname{L}' \cong \sL{\la}\).
Then \cref{KMwords} implies the result.
\end{proof}

We now show that the self-duality of tilting modules is preserved by \(\widehat{\scrm}\):
\begin{prop}\label{tiltdual}
Let \(\la \in \mathscr{P}_n\).
Then we have \(\widehat{\scrm}(\operatorname{T}(\la))^\circledast \cong \widehat{\scrm}(\operatorname{T}(\la))\).
\end{prop}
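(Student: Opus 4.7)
The plan is to transfer the graded duality \((?)^\circledast\) through the Morita equivalence of \cref{MainKMimag} to obtain a contravariant auto-equivalence of \(S_\bbf(n,n)\textup{-mod}\) that fixes the simple modules, and then invoke the standard principle that any such duality on a highest weight category preserves the indecomposable tilting modules.

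Concretely, I would define \(\mathscr{D} := \scrm \circ (?)^\circledast \circ \widehat{\scrm}\). Since \(\scrm\) and \(\widehat{\scrm}\) are mutually inverse equivalences and \((?)^\circledast\) is a contravariant auto-equivalence of \(\mathscr{S}_n\textup{-mod}\), the functor \(\mathscr{D}\) is a contravariant auto-equivalence of \(S_\bbf(n,n)\textup{-mod}\). By \cref{dualsame}, \(\mathscr{D}(\operatorname{L}(\la)) \cong \operatorname{L}(\la)\) for each \(\la \in \mathscr{P}_n\), so \(\mathscr{D}\) fixes simple modules up to isomorphism.

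Next I would appeal to the standard fact from the theory of quasi-hereditary algebras (see for instance Donkin or Ringel): any contravariant auto-equivalence of a highest weight category fixing all simples sends each standard module \(\Delta(\la)\) to the costandard module \(\nabla(\la)\), and hence preserves indecomposable tilting modules, i.e.\ \(\mathscr{D}(\operatorname{T}(\la)) \cong \operatorname{T}(\la)\). The heuristic is that \(\mathscr{D}(\Delta(\la))\) has all composition factors of the form \(\operatorname{L}(\mu)\) with \(\mu \leq \la\) together with a simple socle \(\operatorname{L}(\la)\) of multiplicity one, and these properties characterize \(\nabla(\la)\). Dualizing then converts standard filtrations into costandard filtrations and vice versa, so \(\mathscr{D}(\operatorname{T}(\la))\) is an indecomposable module with both filtration types whose standard filtration has \(\Delta(\la)\) at the bottom with multiplicity one --- the defining properties of \(\operatorname{T}(\la)\).

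Unwinding the definition of \(\mathscr{D}\), the isomorphism \(\mathscr{D}(\operatorname{T}(\la)) \cong \operatorname{T}(\la)\) reads \(\scrm(\widehat{\scrm}(\operatorname{T}(\la))^\circledast) \cong \operatorname{T}(\la)\), and applying \(\widehat{\scrm}\) to both sides yields the desired isomorphism. The main technical hurdle I anticipate is verifying that \((?)^\circledast\) genuinely descends to a contravariant auto-equivalence of \(\mathscr{S}_n\textup{-mod}\), which requires showing that the anti-involution of \(\scrr_{n\delta}\) preserves the annihilator ideal \(\textup{Ann}_{\scrr_{n \delta}} M_n\) defining \(\mathscr{S}_n\). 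If that step proves awkward, an alternative route is to verify directly --- via character information from \cref{KMwords} together with the self-duality of simple modules from \cref{dualsame} and \cref{barimage} --- that \(\widehat{\scrm}(\operatorname{T}(\la))^\circledast\) is an indecomposable \(\mathscr{S}_n\)-module admitting both standard and costandard filtrations of the shape that characterizes \(\widehat{\scrm}(\operatorname{T}(\la))\).
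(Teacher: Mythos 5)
Your route is genuinely different from the paper's. The paper realizes $\operatorname{T}(\la)$ as an indecomposable summand of $W = \operatorname{L}^n(1^{\mu_1}) \otimes \cdots \otimes \operatorname{L}^n(1^{\mu_n})$ (as in Donkin), shows directly that $\widehat{\scrm}(W)^\circledast \cong \widehat{\scrm}(W)$ through an explicit chain of isomorphisms (using that $\widehat{\scrm}$ intertwines tensor products with induction, and the self-duality of the $\widehat{\scrm}(\operatorname{L}(1^a))$), and then uses Krull--Schmidt together with bar-invariance of characters and the fact that $\operatorname{ch}\operatorname{T}(\la)$ determines $\la$ to conclude. Your more conceptual argument --- transporting $\circledast$ through the Morita equivalence to a contravariant auto-equivalence $\mathscr{D}$ of $S_\bbf(n,n)\textup{-mod}$ fixing simples, and then invoking that any such duality on a highest-weight category takes $\Delta(\la)$ to $\nabla(\la)$ and hence fixes each $\operatorname{T}(\la)$ --- would also identify $\mathscr{D}$ with the familiar contravariant duality of the Schur algebra, which is a nice byproduct.

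However, the hurdle you flag is a real gap that needs to be filled before the argument is complete. For $(?)^\circledast$ to restrict to a contravariant auto-equivalence of $\mathscr{S}_n\textup{-mod}$ (and not merely of $\scrr_{n\delta}\textup{-mod}$), one must check that $\tau(\textup{Ann}_{\scrr_{n\delta}} M_n) = \textup{Ann}_{\scrr_{n\delta}} M_n$; equivalently, that $M_n^\circledast \cong M_n$ up to a grading shift. This does hold --- via $(A\circ B)^\circledast \cong B^\circledast \circ A^\circledast$ (the twist formula in \cite[Lemma 3.4.2]{km17}, which the paper's own proof already invokes) and the self-duality of the simple module $\operatorname{L}_{\delta,e-1}$ --- but it must be established, as self-duality of the \emph{simple} $\mathscr{S}_n$-modules (\cref{dualsame}) alone does not formally imply that extensions of $\mathscr{S}_n$-modules computed in $\scrr_{n\delta}\textup{-mod}$ remain $\mathscr{S}_n$-modules after dualizing. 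The paper sidesteps this issue entirely: it exhibits $\widehat{\scrm}(W)^\circledast$ as literally isomorphic to $\widehat{\scrm}(W)$, so no a priori descent of $\circledast$ is needed. Two smaller remarks. Your stated ``heuristic'' characterization of $\nabla(\la)$ (simple socle $\operatorname{L}(\la)$, all composition factors $\leq\la$, $\operatorname{L}(\la)$ with multiplicity one) is insufficient on its own, since $\operatorname{L}(\la)$ itself satisfies those conditions; the argument you want is that $\Delta(\la)$ is the projective cover of $\operatorname{L}(\la)$ in the Serre subcategory generated by $\{\operatorname{L}(\mu):\mu\leq\la\}$, so $\mathscr{D}(\Delta(\la))$ is the injective hull there, namely $\nabla(\la)$. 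Also, since in this paper ``$S_\bbf(n,n)\textup{-mod}$'' means \emph{graded} modules, $\mathscr{D}$ negates grading shifts rather than literally fixing all graded simples; this is harmless because $\operatorname{T}(\la)$ is concentrated in degree zero and $\mathscr{D}$ fixes all degree-zero simples, but it deserves a sentence.
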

\begin{proof}
By \cite[3.3(1)]{Donkin}, the tilting module \(\operatorname{T}(\la)\) arises as an indecomposable summand of \(W = \operatorname{L}^n(1^{\mu_1}) \otimes \cdots \otimes \operatorname{L}^n(1^{\mu_n})\) for some \(\mu \in \mathscr{P}_n\), and moreover, all indecomposable summands of \(W\) are tilting modules \(T(\nu)\), where \(\nu \in \mathscr{P}_n\).
Now we note that
\begin{align*}
\widehat{\scrm}(W)^\circledast &= \widehat{\scrm}(\operatorname{L}^n(1^{\mu_1}) \otimes \cdots \otimes \operatorname{L}^n(1^{\mu_n}) )^\circledast 
\cong
( \widehat{\scrm}(\operatorname{L}^n(1^{\mu_1})) \circ \cdots \circ \widehat{\scrm}(\operatorname{L}^n(1^{\mu_n})) )^\circledast\\
&\cong
 \widehat{\scrm}(\operatorname{L}(1^{\mu_n}))^\circledast \circ \cdots \circ \widehat{\scrm}(\operatorname{L}(1^{\mu_1}))^\circledast
\cong
 \widehat{\scrm}(\operatorname{L}(1^{\mu_n})) \circ \cdots \circ \widehat{\scrm}(\operatorname{L}(1^{\mu_1}))\\
 &\cong
  \widehat{\scrm}(\operatorname{L}^n(1^{\mu_1})) \circ \cdots \circ \widehat{\scrm}(\operatorname{L}^n(1^{\mu_n}))
  \cong
   \widehat{\scrm}(\operatorname{L}^n(1^{\mu_1}) \otimes \cdots \otimes \operatorname{L}^n(1^{\mu_n}) )
   =
  \widehat{\scrm}(W),
\end{align*}
where the second isomorphism follows from \cref{hirrev} and \cite[Lemma 3.4.2]{km17}, the third isomorphism follows from \cref{dualsame}, and the fourth isomorphism follows from \cref{hirrev} and \cite[Lemma 6.2.2]{km17}. Since \(\widehat{\scrm}(W)^\circledast \cong \widehat{\scrm}(W)\), it follows that we must have \(\widehat{\scrm}(\operatorname{T}(\la))\cong \widehat{\scrm}(\operatorname{T}(\nu))^\circledast \) for some tilting module \(T(\nu)\) which is an indecomposable summand of \(W\). But then by \cref{barimage} we have
\begin{align*}
\operatorname{ch}_q \widehat{\scrm}(\operatorname{T}(\la)) = 
\operatorname{ch}_q \widehat{\scrm}(\operatorname{T}(\nu))^\circledast = 
\operatorname{ch}_q \widehat{\scrm}(\operatorname{T}(\nu)).
\end{align*}
Then by \cref{KMWords1} we have that \(\operatorname{ch}\operatorname{T}(\la)= \operatorname{ch}\operatorname{T}(\nu)\). Then, as noted in \cite[Remark 3.3(i)]{Donkin} we must have \(\la = \nu\), so the result follows.
\end{proof}

\subsection{Imaginary Specht modules}

\begin{lem}\label{Ske}
For \(k \in \bbn\), we have \(\spe{(ke)} \cong \widehat\scrm(\Delta(1^k)) \cong \widehat\scrm(\sL{1^k})\) as \(\scrr_{k \delta}\)-modules.
\end{lem}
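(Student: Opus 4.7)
The plan is to apply \cref{wordrec} to realise \(\spe{(ke)}\) as the simple \(\mathscr{S}_k\)-module \(\widehat\scrm(\sL{1^k})\), after dispensing with the second isomorphism in the statement separately.

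For the second isomorphism \(\widehat\scrm(\Delta(1^k)) \cong \widehat\scrm(\sL{1^k})\), I would simply note that \(\Delta(1^k)\) over \(S_\bbf(k,k)\) is one-dimensional (the top exterior power of the natural module), hence simple, so \(\Delta(1^k) = \sL{1^k}\). Alternatively, this follows from \cref{thm:irredweyl}, as each row of \((1^k)\) contains a single node.

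For the first isomorphism, I would first observe that \((ke)\) admits a unique standard tableau \(\ttt_{(ke)}\), so by \cref{thm:basistab} the Specht module \(\spe{(ke)}\) is one-dimensional, and hence simple, with unique word equal to the residue sequence \(\bfi_{(ke)} = (0, 1, \ldots, e-1)^k = \bgg^{(1^k)}\). To apply \cref{wordrec}, I must then check that \(\spe{(ke)}\) descends to an \(\mathscr{S}_k\)-module, i.e., that \(\textup{Ann}_{\scrr_{k\delta}} M_k\) acts trivially on it. The plan for this is to realise \(\spe{(ke)}\) as a quotient of the imaginary tensor space \(M_k = \operatorname{L}_{\delta,e-1}^{\circ k} \cong \spe{(e)}^{\circ k}\) via Frobenius reciprocity. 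Restricting \(\spe{(ke)}\) along \(\scrr_\delta^{\otimes k} \hookrightarrow \scrr_{k\delta}\), its one-dimensional underlying space sits in the idempotent \((\bgg^{(1)}, \ldots, \bgg^{(1)})\), is annihilated by every \(y_r\), and is annihilated by every \(\psi_r\) internal to a block (since such a \(\psi_r\) carries the word to a different one and the module has no other weight space), so the restriction is forced to be isomorphic to \(\spe{(e)}^{\boxtimes k}\). Frobenius reciprocity then lifts the identity map to a non-zero homomorphism \(M_k \to \spe{(ke)}\), which is surjective by simplicity. I expect this restriction identification to be the main technical step.

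Finally, I would apply \cref{wordrec} with \(\la = (1^k)\). The word \(\bgg^{(1^k)}\) appears in \(\spe{(ke)}\) by the preceding paragraph, and \((1^k)\) is the unique minimum of \(\mathscr{P}_k\) under the dominance order, so the hypothesis that \(\spe{(ke)}_{\bgg^\mu} = 0\) for all \(\mu \doms (1^k)\) is vacuous. This will deliver \(\spe{(ke)} \cong \widehat\scrm(\sL{1^k})\) as \(\mathscr{S}_k\)-modules, and hence as \(\scrr_{k\delta}\)-modules via inflation, completing the proof.
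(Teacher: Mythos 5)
Your argument is essentially the same as the paper's: one-dimensionality and simplicity of \(\spe{(ke)}\), identification of the restriction to \(\scrr_\delta^{\otimes k}\) with \(\operatorname{L}_{\delta,e-1}^{\boxtimes k} \cong \spe{(e)}^{\boxtimes k}\), Frobenius reciprocity to produce a nonzero (hence surjective) map \(M_k \to \spe{(ke)}\) forcing \(\textup{Ann}_{\scrr_{k\delta}} M_k\) to act as zero, and then an application of \cref{wordrec}. There is one small slip in your final paragraph: \((1^k)\) being the unique \emph{minimum} of \(\mathscr{P}_k\) under dominance means that every \emph{other} partition of \(k\) strictly dominates it, so the set \(\{\mu \in \mathscr{P}_k : \mu \doms (1^k)\}\) is as large as possible rather than empty, and the hypothesis of \cref{wordrec} is not vacuous. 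It is, however, still satisfied: \(\spe{(ke)}\) is one-dimensional with unique word \(\bgg^{(1^k)}\), and the Gelfand--Graev words \(\bgg^\mu\) for distinct \(\mu\) are pairwise distinct, so every other word space vanishes. This correction does not change the conclusion, and the rest of the argument is sound and matches the paper's proof step for step.
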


\begin{proof}
We have 
\(\textup{Res}_{\delta, \ldots, \delta} \spe{(ke)} \cong V_1 \boxtimes \cdots \boxtimes V_k\)
for some \(\scrr_{\delta}\)-modules \(V_1, \ldots, V_k\). But, since 
\(\spe{(ke)}\) is a simple one-dimensional module with character \(\bgg^{(1^k)} = \bfi^k\), it follows that \(V_1, \ldots, V_k\) each have character \(\bfi\), so by consideration of \S\ref{imagSchSec}, \(V_i \cong  \operatorname{L}_{\delta, e-1}\) for \(i = 1, \ldots, k\). 
Thus \(\textup{Res}_{\delta, \ldots, \delta} \spe{(ke)} \cong \operatorname{L}_{\delta, e-1}^{\boxtimes k}\), so by reciprocity there is a nonzero \(\scrr_{k\delta}\)-homomorphism from \(M_k \cong \operatorname{L}_{\delta, e-1}^{\circ k}\) to \(\spe{(ke)}\). As \(\spe{(ke)}\) is simple, this map is a surjection. Thus  
\((\textup{Ann}_{\scrr_{k \delta}} M_k) \spe{(ke)} = 0\), and so we have a well-defined action of  \(\mathscr{S}_k = \scrr_{k \delta} / \textup{Ann}_{\scrr_{k \delta}} M_k\) on \(\spe{(ke)}\). It follows by \cref{wordrec} that \(\spe{(ke)} \cong \widehat\scrm(\sL{1^k})\), and \(\widehat\scrm(\Delta(1^k)) \cong \widehat\scrm(\sL{1^k})\) as \(\mathscr{S}_k\)-modules, since \(\Delta(1^k) = \sL{1^k}\) as  \(S_\bbf(k,k)\)-modules.
Then inflation along the quotient map \(\scrr_{k\delta} \to \mathscr{S}_k\) gives the result.
\end{proof}

\begin{lem}\label{indprodS}
Let \(k,j \in \bbn\), and write \(n = k+j\). Then we have
\begin{align*}
\spe{((ke),(je))} \cong \widehat\scrm( \Delta^n(1^j) \otimes \Delta^n(1^k)) \langle j \rangle  \cong \widehat\scrm(\operatorname{L}^n(1^j) \otimes \operatorname{L}^n(1^k)) \langle j \rangle
\end{align*}
as \(\scrr_{n \delta}\)-modules.
\end{lem}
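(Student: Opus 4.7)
The approach is to combine the identification of level-$1$ Specht modules $\spe{(ie)}$ from Lemma~\ref{Ske} with the intertwining of induction and tensor products in Theorem~\ref{MainKMimag}, and then match the resulting induction product with $\spe{((ke),(je))}$ via a direct comparison of cyclic presentations.

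First, Lemma~\ref{Ske} gives $\spe{(je)} \cong \widehat\scrm_j(\Delta(1^j)) \cong \widehat\scrm_j(\sL{1^j})$ and $\spe{(ke)} \cong \widehat\scrm_k(\Delta(1^k)) \cong \widehat\scrm_k(\sL{1^k})$. Taking $h = n = j+k$ in Lemma~\ref{hirrev}, these are also $\widehat\scrm_{n,j}(\operatorname{L}^n(1^j))$ (resp.\ $\widehat\scrm_{n,j}(\Delta^n(1^j))$), and similarly for $k$. Applying the natural isomorphism of functors in Theorem~\ref{MainKMimag} yields
\[
\spe{(je)} \circ \spe{(ke)} \;\cong\; \widehat\scrm_n(\Delta^n(1^j) \otimes \Delta^n(1^k)) \;\cong\; \widehat\scrm_n(\operatorname{L}^n(1^j) \otimes \operatorname{L}^n(1^k)),
\]
where the second isomorphism uses $\Delta(1^i) = \sL{1^i}$ as $S_\bbf(i,i)$-modules.

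Next, I would establish an isomorphism $\spe{((ke),(je))} \cong (\spe{(je)} \circ \spe{(ke)}) \langle j \rangle$ of $\scrr_{n\delta}$-modules by comparing cyclic presentations. Let $v := 1 \otimes (z_{(je)} \boxtimes z_{(ke)})$ denote the cyclic generator of the induction product, which has degree $0$ since each factor $\spe{(ie)}$ is a one-dimensional module concentrated in degree~$0$. The relations inherited from the two factors show that $v$ satisfies $e(\bfi_\mu) v = v$, $y_r v = 0$ for all $r \in \{1, \ldots, ne\}$, and $\psi_r v = 0$ for all $r \in \{1, \ldots, ne-1\} \setminus \{je\}$ -- the generator $\psi_{je}$ does not lie in the parabolic $\scrr_{j\delta} \otimes \scrr_{k\delta}$ and so is not constrained. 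These are precisely the defining relations of the generator $z_\mu$ of $\spe{((ke),(je))}$ given in Definition~\ref{defn:regularspecht}. A codegree calculation (analogous to the one in the proof of Lemma~\ref{lem:switchcomps}) gives $\codeg(\ttt_\mu) = j$, so $z_\mu$ has degree $j$ in $\spe{((ke),(je))}$, matching the degree of $v$ after the shift $\langle -j \rangle$. Hence $v \mapsto z_\mu$ defines a homogeneous $\scrr_{n\delta}$-module homomorphism $\spe{(je)} \circ \spe{(ke)} \to \spe{((ke),(je))} \langle -j \rangle$, which is surjective since its image contains the generator $z_\mu$, and an isomorphism by the dimension equality $\dim \spe{((ke),(je))} = \binom{ne}{je} = \dim (\spe{(je)} \circ \spe{(ke)})$ (the latter computed as $1 \cdot 1$ times the number of minimal-length coset representatives for the parabolic).

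The main obstacle is the careful justification that $v \mapsto z_\mu$ extends to a well-defined module homomorphism, i.e., that the annihilator of $v$ in $\scrr_{n\delta}$ is generated as a left ideal precisely by the relations inherited from the two constituent Specht modules. This is a standard property of inductions of cyclic modules over KLR algebras following from the universal property of the induction functor, but it needs to be invoked carefully to confirm that no additional hidden relations obstruct the map into $\spe{((ke),(je))} \langle -j \rangle$.
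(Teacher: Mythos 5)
Your overall strategy is genuinely different from the paper's for the crucial first step, namely the isomorphism $\spe{((ke),(je))} \cong \spe{(je)}\circ\spe{(ke)}\langle j\rangle$: the paper assembles it by citing duality results together with the row-Specht induction theorem (combining \cref{eq:duals}, \cite[Theorems 7.25 \& 8.2]{kmr}, and \cite[Lemma 3.4.2]{km17}), whereas you attempt a direct comparison of cyclic presentations plus a dimension count. That is a reasonable and more self-contained route, and the character/degree/dimension bookkeeping in your sketch is all correct, as is the final identification $\Delta(1^a)=\sL{1^a}$.

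However, there is a logical inversion in the key step. Checking that $v := 1\otimes(z_{(je)}\boxtimes z_{(ke)})$ satisfies the defining relations of $z_\mu$ from \cref{defn:regularspecht} shows well-definedness of the map \emph{out of} the presented module $\spe\mu$, i.e.\ of $z_\mu\mapsto v$; it does not show that $v\mapsto z_\mu$ is well defined. You do flag this issue at the end — that $v\mapsto z_\mu$ would require understanding $\textup{Ann}_{\scrr_{n\delta}}(v)$ — but then wave it away as a ``standard property of inductions of cyclic modules,'' which is not generally true and in any case is not the cleanest fix. Two correct repairs: (a) simply run the map in the other direction: $z_\mu\mapsto v$ is well defined by exactly the relations check you already did, it is surjective because $v$ generates $\spe{(je)}\circ\spe{(ke)}$, and the dimension equality $\dim\spe\mu = \binom{ne}{je} = \dim(\spe{(je)}\circ\spe{(ke)})$ forces it to be an isomorphism — this sidesteps $\textup{Ann}(v)$ entirely; or (b) use Frobenius reciprocity (induction left adjoint to restriction), which reduces producing a map $\spe{(je)}\circ\spe{(ke)}\to\spe\mu\langle -j\rangle$ to defining a map $\spe{(je)}\boxtimes\spe{(ke)}\to\textup{Res}_{j\delta,k\delta}\spe\mu\langle -j\rangle$ in the parabolic subalgebra, where the annihilator of the boxed generator \emph{is} known to be generated by the lifted relations of the two factors; here the correct check is that $z_\mu$ satisfies those parabolic relations (which it does, by \cref{defn:regularspecht}). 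With either repair, your proof goes through and gives an argument independent of \cite[Theorem 8.2]{kmr}.
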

\begin{proof}
We have
\begin{align*}
\spe{((ke),(je))} &\cong
\spe{(je)} \circ \spe{(ke)} \langle j \rangle &\\
&\cong \widehat\scrm(\Delta(1^j)) \circ \widehat\scrm(\Delta(1^k)) \langle j \rangle & \textup{by \cref{Ske}}\\
&\cong \widehat\scrm_{n,j}(\Delta^n(1^j)) \circ \widehat\scrm_{n,k}(\Delta^n(1^k)) \langle j \rangle & \textup{by \cref{hirrev}}\\
&\cong \widehat\scrm(\Delta^n(1^j) \otimes \Delta^n(1^k)) \langle j \rangle & \textup{by \cref{MainKMimag}},
\end{align*}
where the first isomorphism follows by combining \cref{eq:duals}, \cite[Theorems 7.25 \& 8.2]{kmr}, \cite[Lemma 3.4.2]{km17}.
As \(\Delta(1^a) \cong \sL{1^a}\) for all \(a \in \bbn\), this completes the proof.
\end{proof}

Now, if \(V \in \scrr_{n \delta}\textup{-mod}\) is such that \(V \cong \widehat\scrm(W)\) for some \(W \in S_\bbf(h,n)\textup{-mod}\) (where we again consider \(\widehat\scrm(W)\) as an \(\scrr_{n \delta}\)-module via inflation), it necessarily implies that \((\textup{Ann}_{\scrr_{n \delta}}M_n)V = 0\).
Therefore 
\cref{Ske,indprodS} imply the immediate

\begin{cor}\label{lem:moritaequiv}
Let $n=j+k$.
We have \((\textup{Ann}_{\scrr_{n \delta}}M_n)\spe{((ke),(je))} = 0\), so that \(\spe{((ke),(je))}\) may be naturally considered as an \(\mathscr{S}_n\)-module.
Moreover,
\[
\scrm(\spe{((ke),(je))}) = \Delta^n(1^j) \otimes \Delta^n(1^k) \cong \Delta^n(1^k) \otimes \Delta^n(1^j) \in S_\bbf(n,n)\textup{-mod}.
\]
\end{cor}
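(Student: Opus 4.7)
The plan is to deduce this corollary as a direct bookkeeping consequence of Lemma \ref{indprodS} combined with the defining properties of the Morita equivalence in Theorem \ref{MainKMimag}.

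First, I would unpack the isomorphism provided by Lemma \ref{indprodS},
\[
\spe{((ke),(je))} \cong \widehat{\scrm}\left(\Delta^n(1^j) \otimes \Delta^n(1^k)\right)\langle j \rangle,
\]
viewing the right-hand side as an $\scrr_{n\delta}$-module via inflation along the quotient map $\scrr_{n\delta} \twoheadrightarrow \mathscr{S}_n = \scrr_{n\delta}/\textup{Ann}_{\scrr_{n\delta}}M_n$. Any module inflated in this way is, by construction, annihilated by $\textup{Ann}_{\scrr_{n\delta}}M_n$, and the grading shift $\langle j \rangle$ does not alter the algebra action. Hence the same is true of $\spe{((ke),(je))}$, yielding both the first claim and a well-defined $\mathscr{S}_n$-module structure on $\spe{((ke),(je))}$.

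Next, I would apply the quasi-inverse functor $\scrm = \scrm_{n,n}$ of Theorem \ref{MainKMimag} to both sides of the displayed isomorphism; using $\scrm \circ \widehat{\scrm} \cong \id$ one recovers $\Delta^n(1^j) \otimes \Delta^n(1^k)$, up to the shift $\langle j \rangle$, which is inessential on the Schur algebra side since $S_\bbf(n,n)$ is concentrated in degree $0$ and its graded modules live in a single degree. The remaining isomorphism $\Delta^n(1^j) \otimes \Delta^n(1^k) \cong \Delta^n(1^k) \otimes \Delta^n(1^j)$ is the standard swap isomorphism for tensor products of polynomial representations of $GL_n$.

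There is essentially no technical obstacle here: the corollary simply repackages the content of Lemmas \ref{Ske} and \ref{indprodS} in the form most convenient for the subsequent sections, where $\spe{((ke),(je))}$ will be analysed through its concrete image as a tensor product of one-column Weyl modules under the Morita equivalence.
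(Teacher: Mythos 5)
Your proposal is correct and follows essentially the same route as the paper: the paper's argument is precisely the observation (stated in the sentence preceding the corollary) that $V \cong \widehat{\scrm}(W)$ via inflation forces $(\textup{Ann}_{\scrr_{n\delta}}M_n)V = 0$, combined with \cref{indprodS} and an application of $\scrm$. You in fact supply slightly more detail than the paper by explicitly addressing the grading shift $\langle j\rangle$, which the paper's statement silently absorbs.
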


\subsection{Indecomposable summands of imaginary Specht modules}\label{sec:indecsumm}
Next, it will also be useful to us to know a little about the indecomposable summands of Young permutation modules of the form $\sM{k,j}$ over the symmetric group in characteristic $p$.
These summands are by definition Young modules, and the information we need is the following result of Henke.

Suppose that $a$ and $b$ are positive integers with $p$-adic expansions
\[
a = a_0 + p a_1 + p^2 a_2 + \cdots \quad \text{ and } \quad b = b_0 + p b_1 + p^2 b_2 + \cdots
\]
with $0 \leq a_i, b_i < p$ for all $i$.
Write $a \leq_p b$ if, for all $i$, $a_i \leq b_i$.
It will sometimes be helpful to compactly write the $p$-adic expansions in the form $[a_0,a_1,a_2,\dots]$.

\begin{thmc}{henkepkostka}{Theorem 3.3}\label{thm:Henkeformula}
Let $\la = (n-j,j)$ and $\mu = (n-m,m)$.
Then the Young module $\sY{\mu}$ appears as a summand of $\sM{\la}$ exactly once if $j-m \leq_p n-2m$, and does not appear otherwise.
\end{thmc}

\begin{lem}\label{lem:no.indecompsummands}
The number of indecomposable summands of $\spe{((ke),(je))}$ is equal to that of the permutation module $\sM{k,j}$, which may be computed using \cref{thm:Henkeformula}.
\end{lem}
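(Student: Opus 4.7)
The plan is to push the problem through the Morita equivalence of \cref{lem:moritaequiv} onto the classical Schur algebra side, recognize the resulting module as a tilting module, and then transfer the summand count to the symmetric group side via the classical Schur functor, where the image turns out to be a sign-twist of $\sM{k,j}$.

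First, by \cref{lem:moritaequiv}, the image $\scrm(\spe{((ke),(je))}) \cong \Delta^n(1^k) \otimes \Delta^n(1^j)$ lives in $S_\bbf(n,n)\textup{-mod}$, and since $\scrm$ is a Morita equivalence, it preserves the number of indecomposable summands. Each factor $\Delta(1^a) = \wedge^a V$ is simultaneously irreducible, standard, costandard, and indecomposable tilting; hence the tensor product is itself a tilting module, and decomposes as $\bigoplus_\mu \operatorname{T}(\mu)^{m_\mu}$ for some multiplicities $m_\mu$ indexed by partitions $\mu$ of $n$.

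Next, apply the classical Schur functor $f : S_\bbf(n,n)\textup{-mod} \to \bbf\mathfrak{S}_n\textup{-mod}$ (taking a module to its $(1^n)$-weight space). Two ingredients do the work. Donkin's theory of Young and tilting modules over the Schur algebra shows that $f$ sends distinct indecomposable tilting modules to distinct indecomposable Young modules (up to a transposition of indices), so $f$ preserves the number of indecomposable summands of any tilting module. A direct computation of the $(1^n)$-weight space gives
\[
f(\wedge^k V \otimes \wedge^j V) \;\cong\; \textup{Ind}_{\mathfrak{S}_k \times \mathfrak{S}_j}^{\mathfrak{S}_n} (\operatorname{sgn} \boxtimes \operatorname{sgn}) \;\cong\; \operatorname{sgn} \otimes \sM{k,j},
\]
where the signs arise because the stabilizer $\mathfrak{S}_k \times \mathfrak{S}_j$ of a basis vector $e_I \otimes e_J$ acts on it by sign via the exterior algebra structure, and because the restriction of $\operatorname{sgn}_n$ to $\mathfrak{S}_k \times \mathfrak{S}_j$ is $\operatorname{sgn}\boxtimes\operatorname{sgn}$.

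Finally, tensoring with the sign representation is a self-equivalence of $\bbf\mathfrak{S}_n\textup{-mod}$, so $\operatorname{sgn} \otimes \sM{k,j}$ has the same number of indecomposable summands as $\sM{k,j}$. Chaining these equalities yields the desired equality of summand counts, with \cref{thm:Henkeformula} then supplying the explicit enumeration. The principal obstacle is the first ingredient in the middle step: one must cite (or verify) a version of Donkin's Young--tilting correspondence compatible with the conventions in use here, taking care about the transposition of partition labels that links tilting multiplicities on one side with Young multiplicities on the other; once this reference is pinned down, the remainder of the argument is an elementary weight-space calculation.
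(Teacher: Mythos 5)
Your proposal is correct and follows essentially the same route as the paper: pass through the Morita equivalence of \cref{lem:moritaequiv} to the Schur algebra side, observe that $\Delta^n(1^k) \otimes \Delta^n(1^j)$ is a tilting module, apply the Schur functor to land on $\sM{k,j}\otimes\sgn$, use Donkin's Young--tilting correspondence (sending $\operatorname{T}(\la)$ to the signed Young module $\sY{\la}\otimes\sgn$) to preserve the summand count, and finally untwist by sign. Your added detail on the $(1^n)$-weight-space computation and the explicit caution about pinning down the Donkin reference are helpful elaborations, but the underlying argument is identical to the paper's.
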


\begin{proof}
As in the proof of \cref{tiltdual}, the module $\Delta^{j+k}(1^k) \otimes \Delta^{j+k}(1^j) = \operatorname{L}^{j+k}(1^k) \otimes \operatorname{L}^{j+k}(1^j)$ may be naturally decomposed as a sum of tilting modules.
Applying the Schur functor to $\Delta^{j+k}(1^k) \otimes \Delta^{j+k}(1^j)$ yields the module $\sM{k,j}\otimes \sgn$, which is a signed version of the Young permutation module $\sM{k,j}$.
This decomposes as a direct sum of signed Young modules of the form $\sY{\la}\otimes \sgn$, which are the images of the tilting modules $\operatorname{T}(\la)$ under the Schur functor.
As tensoring with sign preserves indecomposable modules and direct sums, the result follows by application of \cref{indprodS}.
\end{proof}

\begin{prop}\label{prop:selfdualspecht}
For any $j,k\geq 1$, any $e\geq2$, and over any field $\bbf$,
\[
\spe{((ke),(je))}^\circledast \cong \spe{((ke),(je))} \langle -2j \rangle.
\]
Moreover, if $M$ is an indecomposable summand of $\spe{((ke),(je))}$, then $M^\circledast \cong M \langle -2j \rangle$.
\end{prop}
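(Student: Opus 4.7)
The plan is to combine the Morita-equivalence picture from \cref{indprodS,lem:moritaequiv} with the self-duality of tilting images established in \cref{tiltdual}. From \cref{indprodS} we have
\[
\spe{((ke),(je))}\;\cong\;\widehat\scrm\bigl(\Delta^{n}(1^{j})\otimes\Delta^{n}(1^{k})\bigr)\langle j\rangle,
\]
where $n=j+k$. Setting $W:=\Delta^{n}(1^{j})\otimes\Delta^{n}(1^{k})$, the duality operation on graded $\scrr_{n\delta}$-modules satisfies $(M\langle r\rangle)^\circledast\cong M^\circledast\langle-r\rangle$, so it suffices to prove $\widehat\scrm(W)^\circledast\cong\widehat\scrm(W)$; the first assertion then follows after shifting by $-j$ on each side.

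For the self-duality of $\widehat\scrm(W)$, I would argue exactly as in the proof of \cref{tiltdual}. By \cite[3.3(1)]{Donkin}, the tensor product $W$ decomposes as a direct sum of tilting modules $\operatorname{T}(\la)$ for $\la\in\mathscr{P}_n$. Applying the additive functor $\widehat\scrm$ gives a decomposition
\[
\widehat\scrm(W)\;\cong\;\bigoplus_{\la}\widehat\scrm(\operatorname{T}(\la))^{\oplus m_\la},
\]
and \cref{tiltdual} tells us that each summand $\widehat\scrm(\operatorname{T}(\la))$ is self-dual. Since $(-)^\circledast$ commutes with finite direct sums, this yields $\widehat\scrm(W)^\circledast\cong\widehat\scrm(W)$, and hence the first claim.

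For the second assertion, I would use that the equivalence $\widehat\scrm$ from \cref{MainKMimag} preserves indecomposable decompositions, so every indecomposable summand of $\widehat\scrm(W)$ is of the form $\widehat\scrm(\operatorname{T}(\la))$ for some tilting summand $\operatorname{T}(\la)$ of $W$. Consequently, every indecomposable summand of $\spe{((ke),(je))}$ is isomorphic to some $\widehat\scrm(\operatorname{T}(\la))\langle j\rangle$, and applying \cref{tiltdual} together with the grading-shift identity above gives $M^\circledast\cong M\langle-2j\rangle$.

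The only mild subtlety, and the one step I would double-check carefully, is the interaction between the grading shift and $\circledast$: one must confirm that the Morita equivalence together with inflation to $\scrr_{n\delta}$ is compatible with $\tau$-duality in the way used above, i.e.\ that the isomorphism of \cref{tiltdual} is genuinely a graded isomorphism (not merely up to shift). This is exactly what \cref{tiltdual} asserts, since its proof pins down the character equality $\operatorname{ch}_q\widehat\scrm(\operatorname{T}(\la))=\operatorname{ch}_q\widehat\scrm(\operatorname{T}(\la))^\circledast$ via \cref{barimage,KMWords1} before invoking indecomposability. Given this, the rest of the argument is a short formal manipulation rather than a real obstacle.
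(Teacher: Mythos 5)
Your proof is correct and follows the same route as the paper's: both identify $\spe{((ke),(je))}$ (up to shift) with $\widehat\scrm$ applied to the tilting module $W = \Delta^n(1^j)\otimes\Delta^n(1^k)$ via \cref{indprodS}, then invoke \cref{tiltdual} for the self-duality of $\widehat\scrm$ of tilting summands. The paper's proof is just a one-line version of your argument; your expansion, including the bookkeeping of the shift by $j$ and the remark that the equivalence preserves indecomposable decompositions, is exactly what is implicit there.
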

\begin{proof}
The indecomposable summands of \(\operatorname{L}^{j+k}(1^j) \otimes \operatorname{L}^{j+k}(1^k)\) are tilting modules, so the result follows immediately from \cref{indprodS} and \cref{tiltdual}.
\end{proof}

\section{Specht module homomorphisms}\label{sec:homs}

The main results of this section are the homomorphisms in \cref{prop:alphahom,prop:gammahom}.
The reader may want to skip ahead to the next section, where these homomorphisms are applied -- the computations carried out in order to prove that these maps are indeed homomorphisms are not so instructive.

In light of \cref{lem:switchcomps}, we can assume that $k\geq j$ as we determine the structure of $\spe{((ke),(je))}$.
In computing homomorphisms, we inherently make use of \cref{lem:tabresi} throughout this section -- it enables us to write the image of the cyclic generator $z_\la$ in terms of standard tableaux with residue $\bfi_\la$.

We recall the necessary notation and results from~\cite{ss20}, in preparation for computing Specht module homomorphisms with domain $\spe{((ke),(je))}$.

For $1\leq i\leq j\leq n-1$, we define $\s{j}{i}:= s_j s_{j-1}\dots s_i$ and $\su{i}{j}:=s_i s_{i+1}\dots s_j$.
Similarly, we define
\[
\psid x y := \psi_x \psi_{x-1} \dots \psi_{y} \quad \text{and} \quad \psiu y x := \psi_y \psi_{y+1} \dots \psi_x
\]
if $x \geq y$, and set both equal to $1_\bbf$ if $x<y$.

We now introduce notation for the basis vectors $v_{\ttt}$ of $\spe{((ke),(je))}$.
Observe that a standard $\la$-tableau $\ttt$ is determined by the entries $a_r:= \ttt(1,r,2)$ lying in its second component, for all $r \in \{1, \dots, je\}$.
We can thus write $\ttt = w_\ttt \ttt_{((ke),(je))}$, where
\[
w_\ttt := \s{a_1-1}{1} \s{a_2-1}{2} \dots \s{a_{je}-1}{je}\in\sss.
\]
It follows that $v_{\ttt} = \psi_\ttt z_{((ke),(je))}$ where
\[
\psi_\ttt := \psid{a_1-1}{1} \psid{a_2-1}{2} \dots \psid{a_{je}-1}{je} \in \scrr_n^\La.
\]
In order to distinguish our standard tableaux compactly, we will often write $v(a_1, a_2, \dots, a_{je})$ for the standard $\la$-tableau with entries $a_1, a_2, \dots, a_{je}$ in the second component.

We recall from \cite[\S4]{ss20} the notions of \emph{$e$-bricks}, \emph{$e$-brick tableaux} and \emph{brick transpositions}.
Let $\ttt$ be a standard $((ke),(je))$-tableau.
We define an \emph{$e$-brick} to be a sequence of $e$ adjacent nodes containing entries $je+1, je+2, \dots, (j+1)e$ for $j\geq 0$.
We say that $\ttt$ is an \emph{$e$-brick tableau} if all entries of $\ttt$ lie in $e$-bricks.
For any $e$-brick tableau $\ttt$, we number the $e$-bricks in the order of their entries, i.e.~$\ttt$ comprises of bricks $1, 2, \dots, j+k$.
Then we have brick transpositions and their corresponding $\psi$ expressions, which we will denote by $\Psi_r$.
In particular, the brick transposition that swaps the $r$th and $(r{+}1)$th bricks corresponds to
\[
\Psi_r = \psid{re}{(r-1)e+1} \chaind{re+1}{(r-1)e+2} \clapdots \chaind{(r+1)e-1}{re}.
\]
As with our $\psi$ generators, we introduce the shorthand $\Psid xy = \Psi_x \Psi_{x-1} \dots \Psi_y$ and $\Psiu yx = \Psi_y \Psi_{y+1} \dots \Psi_x$.

The following results will be crucial in our computations.

\begin{lemc}{ss20}{Lemma 6.8}\label{lem:psiaction}
	Let $e>2$, $\ttt \in \std\la$, $v_\ttt = v(a_1, \dots, a_{je})$, $1\leq r <n$, and $1\leq s < je$ such that $r\not\equiv 2s\pmod{e}$.
	\begin{enumerate}[label=(\roman*)]
		\item\label{lem:psiaction1} If $a_s=r$, $a_{s+1}=r+1$, then $\psi_r v(a_1, \dots, a_{je}) = 0$.
		
		\item\label{lem:psiaction2} If $s$ is maximal such that $a_s\leq{r-1}$, and $r, r+1 \notin \{a_1,\dots, a_{je}\}$, then $\psi_r v(a_1, \dots, a_{je}) = 0$.
	\end{enumerate}
\end{lemc}
%
%*THIS IS FOR e=2*
%Observe parts (i) and (ii) of \cref{lem:e=2 y- and psi-action} are the analogous statements with $e=2$.
%
%\begin{lemc}{ss20}{Lemma 6.9(i)}\label{cor:psiaction}
%	Let $1\leq r <n$, $1\leq s < je$ with $r\geq s+1$ and $r\equiv{2s}\pmod{e}$.
%	Then
%	%\begin{enumerate}[label=(\roman*)]
%		%\item\label{cor:psiaction1}
%		 \[\psi_r v(1, \dots, s, r+2, a_{s+2}, \dots, a_{je}) = v(1, \dots, s, r, a_{s+2}, \dots, a_{je}).\]
%		
%		%\item\label{cor:psiaction2} $\psi_r v(1, \dots, s-1, r, r+1, a_{s+2}, \dots, a_{je}) = v(1, \dots, s-1, r-1, r, a_{s+2}, \dots, a_{je})$.
%	%\end{enumerate}
%\end{lemc}

\begin{lemc}{ss20}{Lemma 6.14(i) \& (ii)}\label{lem:newyaction}
	Let $e>2$, $0 \leq s \leq je-e$ and $v_\ttt = v(a_1, \dots, a_{je})$.
	\begin{enumerate}[label=(\roman*)]
		\item\label{lem:yaction2} 
		If $a_{s+e} = r$ for some $1\leq r \leq n$ such that $r \not \equiv 2s, 2s+1 \pmod{e}$ and $r-1,r+1, r+2, r+3, \dots, r+e-2 \not \in \{a_1, \dots, a_{je}\}$, then $y_{r-1} v_\ttt = 0$.
		
		\item\label{lem:yaction1}
		If $a_{s+e} = r$ for some $1\leq r \leq n$ such that $r \not \equiv 2s, 2s+1 \pmod{e}$ and $r+1, r+2, r+3, \dots, r+e-2 \not \in \{a_1, \dots, a_{je}\}$, then $y_r v_\ttt = 0$.
	\end{enumerate}
\end{lemc}

%*THIS IS FOR e=2*
%Observe parts three, four and five of \cref{lem:e=2 y- and psi-action} are the analogous statements with $e=2$.

Analogues for the previous two lemmas hold for $e=2$, but were not proved in \cite{ss20}.
They can be proved by similar computations, and are needed to prove that \cref{prop:gammahom} also holds for $e=2$.

Next, we note properties of brick transpositions, KLR generators and basis vectors.

%The following lemma is easy to see from the relations in the KLR algebras and their Specht modules, and we will use it frequently without reference.

\begin{lemc}{ss20}{Lemma 4.11}
	\begin{enumerate}[label=(\roman*)]
		\item If $|r-s|>1$, then $\Psi_r \Psi_s = \Psi_s \Psi_r$.
		
		\item If $\la =((ke),(je))$, for some $j,k \geq1$ and $r\neq j$, then $\Psi_r z_\la = 0$.
	\end{enumerate}
\end{lemc}

\begin{propc}{ss20}{Lemmas 4.9 and 4.10 and Proposition 4.12}\label{prop:cancellation}
Let $\la = ((ke),(je))$, for some $j, k \geq 1$.
	If $v \in e(\bfi_\la) \spe\la$, and $1\leq r \leq j+k-1$, then
	\begin{enumerate}[label=(\roman*)]
		\item\label{prop:cancellation0a} $y_s v = 0$ for all $1\leq s \leq (k+j)e$;
		
		\item\label{prop:cancellation0b} $\psi_s v = 0$ for all $1\leq s \leq (k+j)e-1$ with $s\not\equiv 0 \pmod e$;
		
		\item\label{prop:cancellation1} $\psi_{re} \Psi_r v = -2 \psi_{re} v$;
		
		\item\label{prop:cancellation2} for $r<j+k-1$, $\psi_{re} \Psi_{r+1} \Psi_r v = \psi_{re} v$;
		
		\item\label{prop:cancellation3} for $r>1$, $\psi_{re} \Psi_{r-1} \Psi_r v = \psi_{re} v$.
	\end{enumerate}
\end{propc}

Now we are ready to compute our first homomorphisms.
The image of the cyclic generator under the homomorphism below is seen to be a linear combination of basis vectors indexed by brick-tableaux.

\begin{prop}\label{prop:alphahom}
Let $k\geq j\geq 1$.
There exists a degree 1 Specht module homomorphism
\begin{align*}
\alpha_{k,j}: \spe{((ke+e),(je-e))} &\longrightarrow \spe{((ke),(je))}\\
z_{((ke+e),(je-e))} &\longmapsto \sum_{i=0}^k (k+1-i) \Psid{j+i-1}{j} z_{((ke),(je))}.
\end{align*}
\end{prop}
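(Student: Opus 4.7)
The plan is to show that the image
\[
w := \sum_{i=0}^k (k+1-i) \Psid{j+i-1}{j} z_{((ke),(je))}
\]
satisfies the defining relations of the cyclic generator $z_{((ke+e),(je-e))}$ from \cref{defn:regularspecht}, and that the total grading shift is $+1$. Both column-initial tableaux $\ttt_{((ke),(je))}$ and $\ttt_{((ke+e),(je-e))}$ have the same Gelfand--Graev residue sequence $(0,1,\dots,e-1)^{j+k}$, and since each brick transposition $\Psi_r$ preserves this idempotent component, $w \in e(\bfi_{((ke+e),(je-e))}) \spe{((ke),(je))}$. Then \cref{prop:cancellation}(i)--(ii) immediately give $y_r w = 0$ for all $r$ and $\psi_r w = 0$ for all $r \not\equiv 0 \pmod e$, as well as the idempotent relation.

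The substantive task is to verify $\psi_{se} w = 0$ for every $s \in \{1,\dots,j+k-1\}\setminus\{j-1\}$, which I would handle by a case analysis on $s$. Writing $v_i := \Psid{j+i-1}{j} z_{((ke),(je))}$: when $s \leq j-2$, every $\Psi_t$ in $v_i$ has $t \geq j \geq s+2$ and therefore commutes with $\psi_{se}$, reducing each term to a multiple of $\psi_{se} z_{((ke),(je))} = 0$. When $s \in \{j,\dots,j+k-1\}$, terms with $i < s-j$ vanish by the same commutation. For $i \geq s-j+3$, one first commutes $\psi_{se}$ past the outer factors $\Psi_{j+i-1},\dots,\Psi_{s+2}$ (whose indices lie at distance $\geq 2$ from $se$), applies \cref{prop:cancellation}(iv) to collapse $\psi_{se}\Psi_{s+1}\Psi_s v_{s-j}$ to $u := \psi_{se} v_{s-j}$ (valid since in this range $s \leq j+k-3 < j+k-1$), and then commutes the outer $\Psi$-factors back through the disjoint inner factors $\Psi_{s-1},\dots,\Psi_j$ until the rightmost $\Psi_{s+2}$ annihilates $z_{((ke),(je))}$, using that $\Psi_r z_{((ke),(je))} = 0$ for $r \neq j$.

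The only surviving contributions come from $i = s-j,\, s-j+1,\, s-j+2$ (the last omitted when $s = j+k-1$). Using \cref{prop:cancellation}(iii) we get $\psi_{se} v_{s-j+1} = -2u$, and using \cref{prop:cancellation}(iv) (when applicable) $\psi_{se} v_{s-j+2} = u$, so the desired vanishing reduces to the arithmetic identity
\[
(k+1-(s-j)) - 2(k-(s-j)) + (k-1-(s-j)) = 0,
\]
with boundary cases ($s = j$, where $u = \psi_{je} z_{((ke),(je))}$ itself; $s = j+k-1$, where only two terms survive) admitting analogous telescoping identities. The main obstacle is keeping the commutation bookkeeping transparent, in particular verifying that no unwanted quadratic or braid relation terms are introduced when moving $\psi_{se}$ past the outer brick transpositions or reshuffling disjoint $\Psi$-blocks; however, no new identities beyond \cref{prop:cancellation} and the annihilation $\Psi_r z_{((ke),(je))} = 0$ for $r \neq j$ are required. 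The degree check is a separate combinatorial verification: brick transpositions between monochromatic $e$-bricks are homogeneous of degree $0$, so $\deg(w) = \codeg(\ttt_{((ke),(je))})$, and a direct calculation via the recursive definition of codegree shows $\codeg(\ttt_{((ke),(je))}) - \codeg(\ttt_{((ke+e),(je-e))}) = 1$, confirming that $\alpha_{k,j}$ is homogeneous of degree $1$.
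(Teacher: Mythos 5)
Your proposal is correct and follows essentially the same route as the paper's proof: use \cref{prop:cancellation}(i)--(ii) to dispatch the $y_r$, idempotent, and off-$\psi_{se}$ relations, then for each $s$ isolate the three (or two at $s=j+k-1$) surviving terms $\Psid{s-1}{j},\Psid{s}{j},\Psid{s+1}{j}$ via the commutation/annihilation $\Psi_t z_{((ke),(je))}=0$ for $t\neq j$ and parts (iii)--(iv) of \cref{prop:cancellation}, with the coefficients $(k+1-i)$ chosen precisely so that the resulting $[(a+1)-2a+(a-1)]=0$ telescope makes $\psi_{se}w$ vanish. Your handling of the far-away terms (pushing $\Psi_{j+i-1},\dots,\Psi_{s+2}$ through $\psi_{se}$ and the disjoint inner block to hit $z_{((ke),(je))}$) and the degree-$0$ observation about brick transpositions spell out steps the paper compresses into ``by \cref{prop:cancellation}'' and ``an easy check'', but the argument is the same one.
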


\begin{proof}
Let $\la=((ke+e),(je-e))$ and $\mu = ((ke),(je))$.
We know from parts (i) and (ii) of \cref{prop:cancellation} that we need only show that $\psi_{re} \alpha_{k,j} (z_{\la}) = 0$ for each $r\in\{1,2,\dots,j-2\}\cup\{j,j+1,\dots,k+j-1\}$.
This is obvious if $r\in\{1,\dots,j-2\}$.
We now suppose that $r\in\{j,j+1,\dots,k+j-2\}$.

Firstly, if $s\geq r+2$ then
\[
\psi_{se} \Psid{r}{j} z_\mu
= \Psid{r}{j} \left( \psi_{se} z_\mu \right)
= 0.
\]

Secondly, if $s\leq r-2$ then
\[
\psi_{se} \Psid{r}{j} z_\mu
= \Psid{r}{s+2} \left( \psi_{se}\Psi_{s+1}\Psi_s \right) \Psid{s-1}{j} z_\mu
= \Psid{r}{s+2} \psi_{se} \Psid{s-1}{j} z_\mu
= 0,
\]
by \cref{prop:cancellation}.

We thus have
\begin{align*}
\psi_{re}\alpha_{k,j} (z_{\la})
&= \psi_{re} \left( (k-r+j-1) \Psid{r+1}{j} 
+ (k-r+j) \Psid{r}{j} + (k-r+j+1) \Psid{r-1}{j} \right) z_\mu\\
&= \left( (k-r+j-1) (\psi_{re}\Psi_{r+1}\Psi_r) \Psid{r-1}{j} 
+ (k-r+j) (\psi_{re}\Psi_r)\Psid{r-1}{j} + (k-r+j+1) \psi_{re} \Psid{r-1}{j} \right) z_{\mu}\\
&= \left((k-r+j-1) -2(k-r+j) +(k-r+j+1) \right) \Psid{r-1}{j} \psi_{re} z_\mu\\
&= 0,
\end{align*}
by \cref{prop:cancellation}.

Finally,
\begin{align*}
\psi_{(k+j-1)e}\alpha_{k,j} (z_{\la})
&= \psi_{(k+j-1)e} \left( \Psid{k+j-1}{j} + 2 \Psid{k+j-2}{j} \right) z_\mu\\
&= \left( (\psi_{(k+j-1)e}\Psi_{(k+j-1)})\Psid{k+j-2}{j} + 2 \psi_{(k+j-1)e} \Psid{k+j-2}{j} \right) z_{\mu}\\
&= 0.
\end{align*}
The degree of the homomorphism being 1 follows from an easy check that $\codeg \ttt = j$ for any $\ttt \in \std{(ke),(je)}$ with $\res\ttt = \bfi_{((ke),(je))}$.
\end{proof}

\begin{lem}\label{lem:psisinhom}
Let $k\geq j\geq 1$. Let $r=2je-e-2i$ for $i\geq 1$.
Then
\[
0=\psi_r v(1,\dots,\tfrac{r+e}{2},r+3,r+5,r+7,\dots,2je-e+1)\in\spe{((ke),(je))}.
\]
\end{lem}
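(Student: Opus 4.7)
The plan is to prove the lemma by a direct computation, mirroring the proof of \cref{lem:psiaction}(ii) from \cite{ss20} but explicitly handling the residue resonance $r \equiv 2(je-i) \pmod{e}$, which is precisely the case excluded from \cref{lem:psiaction}(ii). Writing $\la = ((ke),(je))$ and $s = je-i = \tfrac{r+e}{2}$, the tableau in question has $a_t = t$ for $t \leq s$ and $a_{s+m} = r+2m+1$ for $m = 1, \dots, i$, so that
\[
v = \psi_\ttt z_\la, \qquad \psi_\ttt = \psid{r+2}{s+1} \psid{r+4}{s+2} \cdots \psid{r+2i}{s+i},
\]
and the generator $\psi_r$ appears only inside the leftmost string.

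I would first commute $\psi_r$ past the leading $\psi_{r+2}$ factor and then apply the braid relation to the exposed subword $\psi_r\psi_{r+1}\psi_r$. Because of the residue resonance, this relation produces an \emph{extra} term on the relevant idempotent of the form $\psi_r\psi_{r+1}\psi_r e(\bfi) = (\psi_{r+1}\psi_r\psi_{r+1} \pm 1)e(\bfi)$, with the sign determined by the residue triple $(i_r, i_{r+1}, i_{r+2})$ in the partially evaluated word. The main rebracketed term I would continue pushing rightward through the remaining strings $\psid{r+2m}{s+m}$ (for $m = 2, \dots, i$) by alternating commutations and, when triggered, further braid relations; for the extra term I would directly commute the truncated product rightward. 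In each branch, the rearrangement eventually exposes a factor $\psi_t$ with $t \neq je$ adjacent to $z_\la$, which by \cref{defn:regularspecht} annihilates $z_\la$, so that summand vanishes.

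The main obstacle will be the bookkeeping of the braid cascade when $i > 1$: tracking the residue sequence at each intermediate state to identify the correct braid variant and the sign of the extra term, and confirming that no secondary term survives. The base case $i = 1$ is a single braid-plus-commutations argument; in the toy example $e = 3$, $j = 2$, $i = 1$, $r = 7$, one finds $\psi_r v = \psi_9 \psi_8 \psi_7 \psi_8 \psi_6 z_\la + \psi_9 \psi_6 z_\la$, and each summand vanishes once the displaced $\psi_8$ (respectively $\psi_9$) is commuted next to $z_\la$. For $i > 1$ I would proceed by induction on $i$, using that the deleted extra term inherits the shape of a shorter instance of the same configuration; any incidental $y$-factors appearing through the quadratic relation can be dispatched using \cref{lem:newyaction}. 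The $e = 2$ case is handled with the analogous quadratic and braid relations mentioned after \cref{lem:newyaction}.
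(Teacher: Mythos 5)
Your overall strategy matches the paper's: both proofs commute $\psi_r$ past $\psi_{r+2}$, hit the exposed $\psi_r\psi_{r+1}\psi_r$ with a braid relation whose resonance term drives an induction on $i$, and kill the remaining term by sliding a $\psi$-generator onto $z_\la$. Your $e=3$, $j=2$, $i=1$ computation is correct.

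The substantive difference is packaging. The paper does not re-derive the braid cascade at all: it invokes \cite[Corollary 6.9(i)]{ss20}, which already encodes the resonance straightening $\psi_r\,v(\ldots,r+2,\ldots)=v(\ldots,r,\ldots)$, including the disposal of the rebracketed term. With that in hand, the inductive step is four lines: pull $\psi_{r+2}$ to the front, apply Corollary 6.9(i), peel off a commuting prefix $\psid{r-1}{(r+e+2)/2}$, and recognise $\psi_{r+2}\,v(1,\dots,\tfrac{r+e+2}{2},r+5,\dots,2je-e+1)$ as the $(i-1)$ instance. Your plan instead proposes to redo this cascade from scratch — tracking the residue triple at each intermediate word, identifying the braid variant, and confirming the rebracketed term eventually dies against $z_\la$. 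That is exactly the content of Corollary 6.9(i), and it is the part of your sketch you explicitly leave undone ("the main obstacle will be the bookkeeping"). So what you have is the right shape but with the hardest lemma-sized step outsourced to future work; citing the existing corollary would close it immediately.

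Two smaller points. First, your description of the extra term ("directly commute the truncated product rightward") is not quite right for $i>1$: that term is not killed by commutation but is, after pulling out a commuting prefix, the $(i-1)$ instance to which the inductive hypothesis applies (as you do correctly say a sentence later). Second, the mention of incidental $y$-factors and \cref{lem:newyaction} is a red herring here — no quadratic $\psi_r^2$ arises anywhere in this argument, only braid resonances, and the paper's proof of this lemma uses no $y$-relations at all.
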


\begin{proof}
We proceed by induction on $i$.
%*THIS IS FOR e=2*
%First suppose $e=2$, and let
Suppose 
that $i=1$.
By applying part (i) of \cite[Corollary 6.9]{ss20}, we have
\begin{align*}
\psi_{2je-e-2} v(1,\dots,je-1,2je-e+1) 
& = \psi_{2je-e} \psi_{2je-e-2} v(1,\dots,je-1,2je-e)\\
& = \psi_{2je-e} v(1,\dots,je-1,2je-e-2)\\	
& = \psi_{2je-e} \psid{2je-e-3}{je} z_{\la}\\
& = \psid{2je-e-3}{je} \psi_{2je-e}z_{\la} \\
& =0.
\end{align*}
Now suppose that the statement holds for some $i>1$.
Then by applying part (i) of \cite[Corollary 6.9]{ss20}, we have
\begin{align*}
& \; \psi_r v(1,\dots,\tfrac{r+e}{2},r+3,r+5,r+7,\dots,2je-e+1) \\
= & \; \psi_{r+2} \psi_r v(1,\dots,\tfrac{r+e}{2},r+2,r+5,r+7,\dots,2je-e+1) \\
= & \; \psi_{r+2} v(1,\dots,\tfrac{r+e}{2},r,r+5,r+7,\dots,2je-e+1) \\
= & \; \psid{r-1}{\tfrac{r+e+2}{2}} \psi_{r+2} v(1,\dots,\tfrac{r+e+2}{2},r+5,r+7,\dots,2je-e+1)\\
= & \; 0 \text{ by the above inductive hypothesis.}\qedhere
\end{align*}
\end{proof}

We now come to our second family of homomorphisms.
Unlike $\alpha_{k,j}$ in \cref{prop:alphahom},
the image of the cyclic generator under a homomorphism $\gamma_{k,j}$ below is \emph{not} a linear combination of basis vectors indexed by brick-tableaux, as the generator has a different residue sequence.

\begin{prop}\label{prop:gammahom}
There is a degree $j$ Specht module homomorphism 
\begin{align*}
\gamma_{k,j}: \spe{((ke,je-e+1),(e-1))} &\longrightarrow \spe{((ke),(je))}\\
z_{((ke,je-e+1),(e-1))} & \longmapsto
v(1,2,\dots,e-1,e+1,e+3,e+5,\dots,2je-e+1).
\end{align*}
\end{prop}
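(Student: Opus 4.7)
The plan is to verify that $v := v(1,2,\dots,e-1,e+1,e+3,\dots,2je-e+1) \in \spe{((ke),(je))}$ satisfies each defining relation of $\spe{\la}$ with $\la = ((ke,je-e+1),(e-1))$ from \cref{defn:regularspecht}, so that the assignment $z_\la \mapsto v$ extends to a well-defined homomorphism by the universal property of cyclic modules. The residue relation $e(\bfi_\la) v = v$ is a direct bookkeeping exercise: the entries $1,\dots,e-1$ occupy $(1,r,2)$ in both $v$ and $\ttt_\la$ with residues $0,\dots,e-2$; for each $1 \leq r \leq je-e+1$, entry $e+2(r-1)$ sits at $(1,r,1)$ in both with residue $r-1$, while entry $e+2r-1$ sits at $(1,e+r-1,2)$ in $v$ and at $(2,r,1)$ in $\ttt_\la$, both of residue $r-2 \pmod e$; and the entries $2je-e+2,\dots,ke+je$ occupy matching positions of the first components. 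By \cref{lem:tabresi}, all subsequent calculations may be performed modulo summands with incompatible residue sequence.

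The $y$-relations $y_r v = 0$ follow from \cref{lem:newyaction} (and the $e=2$ analogue mentioned after that lemma) by a case split on whether position $r$ is adjacent to an entry of $v$'s second component or lies among entries of the first component. For the $\psi$-relations $\psi_r v = 0$, I would split into three regimes:
\begin{itemize}
\item $r \in \{1,\dots,e-2\}$: entries $r,r+1$ are adjacent in the second component and \cref{lem:psiaction}(i) with $s = r$ (noting $r \not\equiv 2r \equiv 0 \pmod e$) gives the vanishing.
\item $r \in \{2je-e+2,\dots,ke+je-1\}$: entries $r,r+1$ are adjacent in the first component, and a symmetric application of \cref{lem:psiaction} gives the vanishing.
\item $r \in \{e,e+2,\dots,2je-e\}$: this is the crux of the argument, as entries $r$ and $r+1$ sit in different components. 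Here I would write $v = \psi_w z_\mu$ with $\mu = ((ke),(je))$ and $\psi_w$ factored as in the excerpt, then commute $\psi_r$ past the leftmost factors $\psid{a_s-1}{s}$ of $\psi_w$ using the KLR braid and quadratic relations, ultimately reducing either to an expression ending in $\psi_s z_\mu$ for some $s \neq je$ (which vanishes by the presentation of $\spe\mu$) or to a tableau handled by \cref{lem:psisinhom}, whose inductive strategy is precisely designed for this configuration.
\end{itemize}
The composite relations $\psi_r\psi_{r+1}v = 0$ and $\psi_r\psi_{r-1}v = 0$ for $r \in \{e,e+2,\dots,2je-e\}$ are then obtained by applying the inner $\psi$, expanding in the standard basis of $\spe{((ke),(je))}$ via \cref{thm:basistab}, and re-applying the preceding case analysis term by term.

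Finally, the degree of the map is $\deg v - \deg z_\la = \codeg(\ttt) - \codeg(\ttt_\la)$, where $\ttt$ is the underlying $((ke),(je))$-tableau; the recursive codegree formula evaluates each of the $j$ modified positions (where the odd-offset entry in the second component of $v$ replaces the even-offset entry of $\ttt_\mu$) as contributing a net $+1$ via the addable-minus-removable count, totalling $j$. The main obstacle will be the middle range of $\psi$-relations: although \cref{lem:psisinhom} supplies the correct combinatorial template, matching our specific $v$ to the hypotheses of that lemma requires additional bookkeeping since its tableau $v(1,\dots,je-i,r+3,\dots,2je-e+1)$ coincides with $v$ only in the boundary case $i = je-e+1$, and the $e = 2$ case must rely on the analogues of \cref{lem:psiaction,lem:newyaction} whose proofs were deferred from \cite{ss20}.
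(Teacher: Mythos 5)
Your overall strategy — verify that $v := v(1,2,\dots,e-1,e+1,e+3,\dots,2je-e+1)$ satisfies the defining relations of $\spe{((ke,je-e+1),(e-1))}$ from \cref{defn:regularspecht} and invoke the universal property — is exactly the paper's approach, and your residue check and degree count are fine. However, the two central parts of the verification contain genuine gaps.

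The claim that the relations $y_r v = 0$ ``follow from \cref{lem:newyaction}\dots by a case split'' does not hold up: for $e \geq 4$ the hypotheses of \cref{lem:newyaction} fail for $v$ itself. Taking $r = e+2m+1 = a_{e+m}$ (so $s=m$), part (ii) of that lemma requires $r+1,\dots,r+e-2 \notin \{a_1,\dots,a_{je}\}$, but $a_{e+m+1} = r+2$ lies in that forbidden window once $e\geq 4$; and for the even offsets $r=e+2i$ the entry $r$ sits in the \emph{first} component, so the lemma (which is phrased in terms of $a_{s+e}$) does not even speak to it. The paper's proof of the $y$-relations instead proceeds by a nested reverse induction that repeatedly commutes $y$-terms through the $\psid{\cdot}{\cdot}$ factors, producing a cascade of error terms; \cref{lem:newyaction} and \cref{lem:psisinhom} are only invoked at the leaves of that induction, applied to \emph{different, reduced} tableaux that emerge from the computation, not to $v$. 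Your proposal skips this entire reduction.

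Similarly, your sketch of the hard case $\psi_r v$ for $r \in \{e,e+2,\dots,2je-e\}$ says to ``commute $\psi_r$ past the leftmost factors $\dots$ ultimately reducing either to $\psi_s z_\mu$ or to a tableau handled by \cref{lem:psisinhom}.'' But commuting $\psi_r$ inward produces a $\psi_r^2$ adjacent to a residue-distance-one pair, and the decisive step is the quadratic relation $\psi_r^2 e(i-1,i) = (y_r - y_{r+1})e(\bfi)$, which converts the problem into two \emph{$y$-terms}, each of which then has to be pushed through another layer of the bookkeeping from the $y$-relation proof (again terminating in \cref{lem:psisinhom} and in the $\psi_s z_\mu = 0$, $y_s z_\mu = 0$ relations). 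Neither the appearance of the $y$'s nor the subsequent reuse of the $y$-analysis is captured in your outline, so as written the crux of the argument is missing. Two minor points: for $r \in \{2je-e+2,\dots,ke+je-1\}$ there is no ``symmetric'' version of \cref{lem:psiaction} --- the vanishing is just $\psi_r v = \psi_w \psi_r z_{((ke),(je))} = 0$ by commuting past all of $\psi_w$ --- and the composite relations $\psi_r\psi_{r\pm 1} v = 0$ are handled in the paper in one line each by \cref{lem:psiaction}(ii) and (i), which is considerably cleaner than expanding in the standard basis and re-running the single-$\psi$ analysis term by term as you propose.
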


\begin{proof}
We will prove this for $e\neq 2$, and omit the proof for $e=2$ -- it is similar enough in spirit, but rather lengthy.

Let $\la=((ke),(je))$.
We show that $v(1,2,\dots,e-1,e+1,e+3,e+5,\dots,2je-e+1)$ satisfies the relations that $z_{((ke,je-e+1),(e-1))}$ satisfies in \cref{defn:regularspecht}.

\begin{enumerate}[label=(\roman*)]
	
	\item We first show that all $y$ terms kill $v(1,2,\dots,e-1,e+1,e+3,e+5,\dots,2je-e+1) $.

\begin{itemize}

\item For $r\in\{1,\dots,e-1\}\cup\{2je-e+2,\dots,ke+je\}$, it is obvious that $y_r v(1,2,\dots,e-1,e+1,e+3,e+5,\dots,2je-e+1) = 0$.

\item

Suppose that $r\in\{e,e+2,e+4,\dots,2je-e\}$, and let $r=e+2i$ for $0 \leq i \leq je-e$.
Observe that
\[
y_rv(1,2,\dots,e-1,e+1,e+3,e+5,\dots,2je-e+1) 
= 
\epsilon \cdot
y_r
\psid{r}{e+i}
\psid{r+2}{e+i+1}
\psid{r+4}{e+i+2}
\dots
\psid{2je-e}{je}
z_{\la},
\]
where
\[
\epsilon:=\psi_e\psid{e+2}{e+1}\psid{e+4}{e+2}\dots\psid{r-2}{e+i-1}.
\]
To prove that the above expression is zero, we now proceed to show that
\begin{align*}
& y_{r+ae} \psid{r+ae}{(a+1)e+i}\psid{r+ae+2}{(a+1)e+i+1}\psid{r+ae+4}{(a+1)e+i+2}\dots\psid{2je-(a+1)e}{je} z_{\la} = 0
\end{align*}
for all $a\in\{0,\dots,j-1\}$. We proceed by reverse induction on $a$. For $a=j-1$, we must have $i=0$, in which case this expression becomes
\[
\left( y_{je}\psi_{je}(-1,0) \right) z_{\la}
= \psi_{je} y_{je+1} z_{\la}
= 0.
\]
Now assuming the above statement holds for some $a-1<j-1$, we have
\begin{align*}
& \left(y_{r+ae}\psi_{r+ae}(i-1,i)\right) \psid{r+ae-1}{(a+1)e+i}\psid{r+ae+2}{(a+1)e+i+1}\psid{r+ae+4}{(a+1)e+i+2}\dots\psid{2je-(a+1)e}{je} z_{\la}  \\
= \; &  \psid{r+ae}{(a+1)e+i}
\psi_{r+ae+2}
\left( y_{r+ae+1}\psi_{r+ae+1}(i,i) \right)
\psid{r+ae}{(a+1)e+i+1}\psid{r+ae+4}{(a+1)e+i+2}
\psid{r+ae+6}{(a+1)e+i+3}\dots\psid{2je-(a+1)e}{je} z_{\la}  \\
= \; &  \psid{r+ae}{(a+1)e+i}
\psi_{r+ae+2}
\left( \psi_{r+ae+1}y_{r+ae+2}-1 \right)
\psid{r+ae}{(a+1)e+i+1}\psid{r+ae+4}{(a+1)e+i+2}
\psid{r+ae+6}{(a+1)e+i+3}\dots\psid{2je-(a+1)e}{je} z_{\la}
\end{align*}

The first term is

\begin{align*}
& \psid{r+ae}{(a+1)e+i}
\psid{r+ae+2}{(a+1)e+i+1}
\psid{r+ae+4}{r+ae+3}
\left( y_{r+ae+2}\psi_{r+ae+2} (i+1,i) \right)
\psid{r+ae+1}{(a+1)e+i+2}\\
& \cdot \psid{r+ae+6}{(a+1)e+i+3}
\psid{r+ae+8}{(a+1)e+i+4}\dots\psid{2je-(a+1)e}{je} z_{\la}  \\
= \; & \psid{r+ae}{(a+1)e+i}
\psid{r+ae+2}{(a+1)e+i+1}
\psid{r+ae+4}{(a+1)e+i+2}
\psid{r+ae+6}{r+ae+4}
\left( y_{r+ae+3}\psi_{r+ae+3} (i+2,i) \right)
\psid{r+ae+2}{(a+1)e+i+3}\\
&\cdot \psid{r+ae+8}{(a+1)e+i+4}
\psid{r+ae+10}{(a+1)e+i+5}\dots\psid{2je-(a+1)e}{je} z_{\la}  \\
\vdots \;\; & \\
= \; &
\psid{r+ae}{(a+1)e+i} 
\psid{r+ae+2}{(a+1)e+i+1} 
\psid{r+ae+4}{(a+1)e+i+2} \dots
\psid{r+ae+2e-2}{(a+2)e+i-1}
y_{r+ae+e}\\
&\cdot \psid{r+ae+2e}{(a+2)e+i}
\psid{r+ae+2e+2}{(a+2)e+i+1}
\psid{r+ae+2e+4}{(a+2)e+i+2}
\dots
\psid{2je-(a+1)e}{je}z_{\la} \\
= \; &
\left(\psid{r+ae}{(a+1)e+i} 
\psid{r+ae+2}{(a+1)e+i+1} 
%\psid{r+ae+4}{(a+1)e+i+2}
\dots
\psid{r+ae+2e-2}{(a+2)e+i-1} \right)
\left( \psid{r+ae+2e}{r+ae+e+1}
\psid{r+ae+2e+2}{r+ae+e+3}
%\psid{r+ae+2e+4}{r+ae+e+5}
\dots
\psid{2je-(a+1)e}{2je-(a+2)e+1} \right) \\
& \cdot 
y_{r+ae+e}
\psid{r+ae+e}{(a+2)e+i}
\psid{r+ae+e+2}{(a+2)e+i+1}
\psid{r+ae+e+4}{(a+2)e+i+2}
\dots
\psid{2je-(a+2)e}{je} z_{\la} \\
= \; & 0 \quad \text{by the inductive hypothesis.}
\end{align*}
The second term is %(for $e\neq 2$)
\begin{align*}
-& \psi_{r+ae+2}
\left( \psi_{r+ae}\psi_{r+ae-1}\psi_{r+ae} (i-1,i,i-1) \right)
\psid{r+ae-2}{(a+1)e+i} \chaind{r+ae-1}{(a+1)e+i+1}\\
&\cdot \psid{r+ae+4}{(a+1)e+i+2} \psid{r+ae+6}{(a+1)e+i+3}
\dots \psid{2je-(a+1)e}{je} z_{\la} \\
= \; & 
\psi_{r+ae+2}
\left( \psi_{r+ae-1}\psi_{r+ae} \psi_{r+ae-1} + 1 \right)
\psid{r+ae-2}{(a+1)e+i} \chaind{r+ae-1}{(a+1)e+i+1}
\psid{r+ae+4}{(a+1)e+i+2} \psid{r+ae+6}{(a+1)e+i+3}
\dots \psid{2je-(a+1)e}{je} z_{\la}.
\end{align*}
The first term of this expression is zero by \cref{lem:psiaction}\ref{lem:psiaction1}, while the second term is zero by a computation analogous to \cref{lem:psisinhom}.

%*THIS IS FOR e=2*
%Now let $e=2$.
%Then the second term is
%\begin{align*}
%& - \psid{r+ae}{(a+1)e+i}
%\psid{r+ae}{(a+1)e+i+1}
%\psi_{r+ae+4}
%\left( \psi_{r+ae+2}\psi_{r+ae+3}\psi_{r+ae+2} (i+1, i,i+1) \right) \\
%\ & \cdot
%\psid{r+ae+1}{(a+1)e+i+2}
%\psid{r+ae+6}{(a+1)e+i+3}
%\psid{r+ae+8}{(a+1)e+i+4}\dots\psid{2je-(a+1)e}{je} z_{\la} \\
%= \; & - \psid{r+ae}{(a+1)e+i}
%\psid{r+ae}{(a+1)e+i+1}
%\psi_{r+ae+4}
%\left( \psi_{r+ae+3}\psi_{r+ae+2}\psi_{r+ae+3} -2y_{r+ae+3}+y_{r+ae+2}+y_{r+ae+4} \right) \\
%\ & \cdot \psid{r+ae+1}{(a+1)e+i+2}
%\psid{r+ae+6}{(a+1)e+i+3}
%\psid{r+ae+8}{(a+1)e+i+4}\dots\psid{2je-(a+1)e}{je} z_{\la} 
%\end{align*}
%where the first, third and fourth terms are equal to zero by parts (ii), (iv) and (v) of \cref{lem:e=2 y- and psi-action}, respectively.
%The second term of this is
%\begin{align*}
%& 2 \psid{r+ae}{(a+1)e+i} y_{r+ae+3}
%\psid{r+ae}{(a+1)e+i+1}
%\psid{r+ae+1}{(a+1)e+i+2}
% \psi_{r+ae+4}
%\psid{r+ae+6}{(a+1)e+i+3}
%\psid{r+ae+8}{(a+1)e+i+4}\dots\psid{2je-(a+1)e}{je} z_{\la},
%\end{align*}
%which is zero by a computation analogous to \cref{lem:psisinhom}.

\item Suppose that $r\in\{e+1,e+3,\dots,2je-e+1\}$, and let $r=e+2i+1$ for $0 \leq i \leq je-e$.
Observe that
\[
y_rv(1,2,\dots,e-1,e+1,e+3,e+5,\dots,2je-e+1) 
= 
\epsilon \cdot
y_r
\psid{r-1}{i+e}\psid{r+1}{i+e+1}\psid{r+3}{i+e+2}\dots\psid{2je-e}{je}z_{\la},
\]
where
\[
\epsilon:=\psi_e\psid{e+2}{e+1}\psid{e+4}{e+2}\dots\psid{r-3}{i+e-1}.
\]
It thus suffices to show that $y_r \psid{r-1}{i+e}\psid{r+1}{i+e+1}\psid{r+3}{i+e+2}\dots\psid{2je-e}{je}z_{\la}=0$.
We have
\begin{align*}
 y_r \psid{r-1}{i+e}\psid{r+1}{i+e+1}\psid{r+3}{i+e+2}\dots\psid{2je-e}{je}z_{\la}	
&=
\left( y_r \psi_{r-1} (i-1,i) \right)
\psid{r-2}{i+e}\psid{r+1}{i+e+1}\psid{r+3}{i+e+2}\dots\psid{2je-e}{je}z_{\la}\\
&=
\psi_{r-1} 
 y_{r-1}
\psid{r-2}{i+e}\psid{r+1}{i+e+1}\psid{r+3}{i+e+2}\dots\psid{2je-e}{je}z_{\la}.
\end{align*}
If $i=0$, then the above is
\begin{align*}
\psi_e y_e \psid{e+2}{e+1}\psid{e+4}{e+2}\dots\psid{2je-e}{je}z_{\la}
= \psi_e\psid{e+2}{e+1}\psid{e+4}{e+2}\dots\psid{2je-e}{je} y_e z_{\la}
=0.
\end{align*}
If $i>0$, then the above becomes
\begin{align*}
&
\psi_{r-1} 
\left( y_{r-1} \psi_{r-2} (i-1,i-1) \right)
\psid{r-3}{i+e}\psid{r+1}{i+e+1}\psid{r+3}{i+e+2}\dots\psid{2je-e}{je}z_{\la}\\
=  \;
&\psi_{r-1} 
\left( \psi_{r-2} y_{r-2} + 1 \right)
\psid{r-3}{i+e}\psid{r+1}{i+e+1}\psid{r+3}{i+e+2}\dots\psid{2je-e}{je}z_{\la}.
\end{align*}
The second term of this expression becomes
\begin{align*}
\psi_{r-1} 
\psid{r-3}{i+e}\psid{r+1}{i+e+1}\psid{r+3}{i+e+2}\dots\psid{2je-e}{je}z_{\la} 
&= 
\psid{r-3}{i+e}
\psi_{r-1}
v(1,\dots,i+e,r+2,r+4,\dots,2je-e+1)\\
&= 0 \text{ by \cref{lem:psisinhom}.}
\end{align*}
If $i=1$, then the first term becomes
\begin{align*}
& \psid{e+2}{e+1} y_{e+1} 
\psid{e+4}{e+2}\psid{e+5}{e+3}\dots\psid{2je-e}{je}z_{\la}	
= \psid{e+2}{e+1}\psid{e+4}{e+2}\psid{e+5}{e+3}\dots\psid{2je-e}{je}
y_{e+1} z_{\la}
=0.
\end{align*}
If $i>1$, then this term becomes
\begin{align*}
\psid{r-1}{r-2} y_{r-2}
v(1,\dots,i+e-1,r-2,r+2,r+4,\dots,2je-e+1) = 0,
\end{align*}
%*THIS IS FOR e=2*
%by \cref{lem:e=2 y- and psi-action}(iii) if $e=2$, and 
by \cref{lem:newyaction}\ref{lem:yaction1}. % if $e\in\{3,4,5\}$.
If $e>5$, we have
\begin{align*}
& \psi_{r-1}\psi_{r-2} 
\left( y_{r-2} \psi_{r-3} (i-1,i-2) \right)
\psid{r-4}{i+e}\psid{r+1}{i+e+1}\psid{r+3}{i+e+2}\dots\psid{2je-e}{je}z_{\la} \\
=  \;
&\psi_{r-1}\psi_{r-2} \psi_{r-3}
y_{r-3} 
\psid{r-4}{i+e}\psid{r+1}{i+e+1}\psid{r+3}{i+e+2}\dots\psid{2je-e}{je}z_{\la}.
\end{align*}
If $i=2$, then this expression becomes
\begin{align*}
\psid{e+4}{e+2}
y_{e+2} 
\psid{e+6}{e+3}\psid{e+8}{e+4}\dots\psid{2je-e}{je}z_{\la}
=
\psid{e+4}{e+2}
\psid{e+6}{e+3}\psid{e+8}{e+4}\dots\psid{2je-e}{je}y_{e+2} z_{\la}
=0.
\end{align*}
Now assuming that $i>2$, the expression is
\[
\psi_{r-1}\psi_{r-2} \psi_{r-3}
y_{r-3} 
v(1,\dots,i+e-1,r-3,r+2,r+4,\dots,2je-e+1) = 0
\]
by \cref{lem:newyaction}\ref{lem:yaction1} if $e=6$.
If $e>6$, we continue in this way until we reach
\begin{align*}
& \psid{r-1}{2i+4}
y_{2i+4}
v(1,\dots,i+e-1,2i+4,r+2,r+4,\dots,2je-e+1) = 0
\end{align*}
by \cref{lem:newyaction}\ref{lem:yaction1}.
\end{itemize}

\item We now show that $v(1,\dots,e-1,e+1,e+3,\dots,2je-e+1)$ satisfies each of the $\psi$ relations in \cref{defn:regularspecht}.

\begin{itemize}
	
\item For all $r\in\{1,\dots,e-2\}\cup\{2je-e+2,\dots,ke+je-1\}$, it is obvious that $\psi_r$ kills $v(1,\dots,e-1,e+1,e+3,\dots,2je-e+1)$.
	
\item Suppose that $r\in\{e,e+2,e+4,\dots,2je-e\}$, and let $r=e+2i$ for $0 \leq i \leq je-e$.
%If $e\neq 2$,
Then
	
\begin{align*}
& \psi_r\psi_{r+1}
v(1,\dots,e-1,e+1,e+3,\dots,2je-e+1) \\
= \; &
\psi_e\psid{e+2}{e+1}\psid{e+4}{e+2}\dots\psid{r-2}{e+i-1}
\psi_r
v(1,\dots,e+i-1,r+2,r+3,r+5,r+7,\dots,2je-e+1)\\
= \; & 0 \text{ by \cref{lem:psiaction}\ref{lem:psiaction2}}.
\end{align*}
\item Suppose that $r\in\{e+2,e+4,\dots,2je-e\}$, and let $r=e+2i$ for $1\leq i \leq je-e$.
Then
\begin{align*}
& \psi_r\psi_{r-1}
v(1,\dots,e-1,e+1,e+3,\dots,2je-e+1) \\
= \; &
\psi_e\psid{e+2}{e+1}\psid{e+4}{e+2}\dots\psid{r-4}{e+i-2}
\psi_r
v(1,\dots,e+i-2,r,r+1,r+3,r+5,\dots,2je-e+1)\\
= \; & 0 \text{ by \cref{lem:psiaction}\ref{lem:psiaction1}.}	
\end{align*}

\item Suppose that $r\in\{e,e+2,e+4,\dots,2je-e\}$, and let $r=e+2i$ for some $0 \leq i \leq je-e$.
Observe that
\begin{align*}
& \psi_r v(1,\dots,e-1,e+1,e+3,\dots,2je-e+1) \\
= \; &	
\psi_e\psid{e+2}{e+1}\psid{e+4}{e+2}\dots\psid{r-2}{e+i-1}
\left( \psi_r^2 e(i-1, i) \right) \psid{r-1}{e+i}\psid{r+2}{e+i+1}\psid{r+4}{e+i+2}\dots\psid{2je-e}{je}z_{\la}\\
%	\end{align*}
%
%*THIS IS FOR e=2*	
%	First let $e=2$. Then the above expression becomes
%	\[
%	- \psi_e\psid{e+2}{e+1}\psid{e+4}{e+2}\dots\psid{r-2}{e+i-1}
%	\left( y_r^2 -2y_ry_{r+1} +y_{r+1}^2 \right) \psid{r-1}{e+i}\psid{r+2}{e+i+1}\psid{r+4}{e+i+2}\dots\psid{2je-e}{je}z_{\la}.
%	\]
%	The first and second terms are
%	\begin{align*}
%	& - \psi_e\psid{e+2}{e+1}\psid{e+4}{e+2}\dots\psid{r-4}{e+i-2}
%	\left( y_r -2y_{r+1} \right)y_r \psid{r-2}{e+i-1} \psid{r-1}{e+i}\psid{r+2}{e+i+1}\psid{r+4}{e+i+2}\dots\psid{2je-e}{je}z_{\la} \\
%	= \; &
%	- \psi_e\psid{e+2}{e+1}\psid{e+4}{e+2}\dots\psid{r-4}{e+i-2}
%	\left( y_r -2y_{r+1} \right)y_r 
%	v(1,\dots,e+i-2,r-1,r,r+3,r+5,\dots,2je-e+1),
%	\end{align*}
%	which is zero by the fourth part of \cref{lem:e=2 y- and psi-action}.
%	Now the third term is
%	\begin{align*}
%	& -  \psi_e\psid{e+2}{e+1}\psid{e+4}{e+2}\dots\psid{r-2}{e+i-1}
%	\psid{r-1}{e+i} y_{r+1} \psi_{r+2}
%	\left( y_{r+1}\psi_{r+1} (i,i+1) \right)
%	\psid{r}{e+i+1}\psid{r+4}{e+i+2}\dots\psid{2je-e}{je}z_{\la} \\
%	= \; & -  \psi_e\psid{e+2}{e+1}\psid{e+4}{e+2}\dots\psid{r-2}{e+i-1}
%	\psid{r-1}{e+i} y_{r+1} \psid{r+2}{r+1} \psid{r+4}{r+3} y_{r+2}
%	\psid{r}{e+i+1}\psid{r+2}{e+i+2}\dots\psid{2je-e}{je}z_{\la} \\
%	= \; & -  \psi_e\psid{e+2}{e+1}\psid{e+4}{e+2}\dots\psid{r-2}{e+i-1}
%	\psid{r-1}{e+i} y_{r+1} \psid{r+2}{r+1} \psid{r+4}{r+3} y_{r+2}
%	v(1,\dots,e+i,r+1,r+3,a_{e+i+3},\dots,a_{je}),
%	\end{align*}
%	which is zero by the fifth part of \cref{lem:e=2 y- and psi-action}.
%
%If $e>2$, then the above expression becomes
%\begin{align*}
= \; &
\psi_e\psid{e+2}{e+1}\psid{e+4}{e+2}\dots\psid{r-2}{e+i-1}
\left( y_r -y_{r+1} \right) \psid{r-1}{e+i}\psid{r+2}{e+i+1}\psid{r+4}{e+i+2}\dots\psid{2je-e}{je}z_{\la}.
\end{align*}
In the first term, we have
\begin{align*}
& \psi_e\psid{e+2}{e+1}\psid{e+4}{e+2}\dots\psid{r-2}{e+i-1}
\left( y_r \psi_{r-1} e(i-1, i-1) \right) \psid{r-2}{e+i}\psid{r+2}{e+i+1}\psid{r+4}{e+i+2}\dots\psid{2je-e}{je}z_{\la} \\
= \; &
\psi_e\psid{e+2}{e+1}\psid{e+4}{e+2}\dots\psid{r-2}{e+i-1}
\left( \psi_{r-1} y_{r-1} + 1 \right) \psid{r-2}{e+i}\psid{r+2}{e+i+1}\psid{r+4}{e+i+2}\dots\psid{2je-e}{je}z_{\la},
\end{align*}
and the first term can be shown to be $0$, as at the end of part (i) of this proof.
%where $y_{r-1} \psid{r-2}{e+i}\psid{r+2}{e+i+1}\psid{r+4}{e+i+2}\dots\psid{2je-e}{je}z_{\la}$ equals zero similarly to part 1(c) above.
We now let
\[
\zeta:= \psid{r+2}{e+i+1}\psid{r+4}{e+i+2}\dots\psid{2je-e}{je}.
\]
By applying \cref{lem:psiaction}\ref{lem:psiaction1} throughout, we thus have
\begin{align*}
& \psi_e\psid{e+2}{e+1}\psid{e+4}{e+2}\dots\psid{r-4}{e+i-2}
\left( \psi_{r-2}\psi_{r-3}\psi_{r-2} e(i-2,i-1,i-2) \right)
\psid{r-4}{e+i-1} \psid{r-3}{e+i}\cdot\zeta z_{\la} \\
= \; &
\psi_e\psid{e+2}{e+1}\psid{e+4}{e+2}\dots\psid{r-4}{e+i-2}
\left( \cancel{\psi_{r-3}\psi_{r-2}\psi_{r-3}} +1 \right)
\psid{r-4}{e+i-1} \psid{r-3}{e+i}\cdot\zeta z_{\la} \\
= \; &
\psi_e\psid{e+2}{e+1}\psid{e+4}{e+2}\dots\psid{r-6}{e+i-3}
\left( \psi_{r-4}\psi_{r-5}\psi_{r-4} e(i-3,i-2,i-3) \right)
\psid{r-6}{e+i-2} \psid{r-5}{e+i-1}\psid{r-3}{e+i}\cdot\zeta z_{\la}\\
= \; &
\psi_e\psid{e+2}{e+1}\psid{e+4}{e+2}\dots\psid{r-6}{e+i-3}
\left( \cancel{\psi_{r-5}\psi_{r-4}\psi_{r-5}} +1 \right)
\psid{r-6}{e+i-2} \psid{r-5}{e+i-1}\psid{r-3}{e+i}\cdot\zeta z_{\la}\\
\vdots \; \; & \\
= \; &
\psi_e \psid{e+2}{e+1}
\left( \psi_{e+4}\psi_{e+3}\psi_{e+4} e(1,2,1) \right) \psi_{e+2} \psi_{e+3}\psid{e+5}{e+4}\psid{e+7}{e+5}\dots\psid{r-3}{e+i} \cdot\zeta z_{\la}\\
= \; &
\psi_e \psid{e+2}{e+1}
\left( \cancel{\psi_{e+3}\psi_{e+4}\psi_{e+3}} +1 \right) \psi_{e+2} \psi_{e+3}\psid{e+5}{e+4}\psid{e+7}{e+5}\dots\psid{r-3}{e+i} \cdot\zeta z_{\la}\\
= \; &
\psi_e \left( \psi_{e+2}\psi_{e+1}\psi_{e+2} e(0,1,0) \right)
\psi_{e+3} \psid{e+5}{e+4}\psid{e+7}{e+5}\dots\psid{r-3}{e+i} \cdot\zeta z_{\la}\\
= \; &
\psi_e \left( \psi_{e+1}\psi_{e+2}\psi_{e+1} +1 \right)
\psi_{e+3} \psid{e+5}{e+4}\psid{e+7}{e+5}\dots\psid{r-3}{e+i} \cdot\zeta z_{\la}\\
= \; &
\psi_e \psi_{e+1}\psi_{e+2}
\psi_{e+3} \psid{e+5}{e+4}\psid{e+7}{e+5}\dots\psid{r-3}{e+i} \cdot\zeta \left(\psi_{e+1} z_{\la}\right)
+ 
\psi_{e+3} \psid{e+5}{e+4}\psid{e+7}{e+5}\dots\psid{r-3}{e+i} \cdot\zeta \left( \psi_e z_{\la} \right)\\
= \; & 0.
\end{align*}
Now let
\[
\eta:= \psi_e\psid{e+2}{e+1}\psid{e+4}{e+2}\dots\psid{r-2}{e+i-1}.
\]
Then the second $y$ term of the expression above becomes
\begin{align*}
& \eta\cdot
\psid{r-1}{e+i}
\psi_{r+2} \left( y_{r+1} \psi_{r+1} (i,i) \right) \psid{r}{e+i+1}
\psid{r+4}{e+i+2}\psid{r+6}{e+i+3}\dots\psid{2je-e}{je} z_{\la} \\
= \; & 
\eta\cdot
\psid{r-1}{e+i}
\psi_{r+2} \left( \psi_{r+1} y_{r+2} -1 \right) \psid{r}{e+i+1}
\psid{r+4}{e+i+2}\psid{r+6}{e+i+3}\dots\psid{2je-e}{je} z_{\la},
\end{align*}
where $\psi_{r+2} \psid{r+4}{e+i+2}\psid{r+6}{e+i+3}\dots\psid{2je-e}{je} z_{\la} = 0$ by \cref{lem:psisinhom}.
We thus have
\begin{align*}
& \eta\cdot
\psid{r-1}{e+i}\psid{r+2}{e+i+1} y_{r+2}
\psid{r+4}{e+i+2}\psid{r+6}{e+i+3}\dots\psid{2je-e}{je} z_{\la} \\
= \; &
\eta\cdot
\psid{r-1}{e+i}\psid{r+2}{e+i+1}
\psid{r+4}{r+3}\psid{r+6}{r+5}\dots\psid{2je-e}{2je-e-1}
y_{r+2} \psid{r+2}{e+i+2}\psid{r+4}{e+i+3}\dots\psid{2je-e-2}{je} z_{\la},
\end{align*}
where $y_{r+2} \psid{r+2}{e+i+2}\psid{r+4}{e+i+3}\dots\psid{2je-e-2}{je} z_{\la} = 0$ as in the proof of the $y_r$ relations above.
\end{itemize}
\end{enumerate}
%Longer proof of degree:
%
%We note that if $e=2$, then $\ttt_{((ke,je-e+1),(e-1))}$ has degree $2j$, with no negative contributions to the degree, and all positive contributions (each $+1$) coming from the solitary node in the second component and each node in the second row of the first component.
%If $e>2$, then $\ttt_{((ke,je-e+1),(e-1))}$ has degree $1$, only picking up the degree contribution from the first node in the second component, which is filled with entry $1$.
%
%Now, if $e=2$ it suffices to observe that the tableau corresponding to $v(1,2,\dots,e-1,e+1,e+3,e+5,\dots,2je-e+1)$ has degree $3j$, with each entry of a $0$-node in the second component contributing $+1$, and each entry of a $1$-node in the second component contributing $+2$.
%If $e>2$, then the corresponding tableau has degree $j+1$, with $+1$ contributions coming from the node containing $1$ and every $(-1)$-node in the second component.\\
%
Finally, we prove that $\gamma_{k,j}$ has degree $j$.
If $e=2$, then $\ttt_{((ke,je-e+1),(e-1))}$ has degree $2j$ and the degree of the tableau corresponding to $v(1,2,\dots,e-1,e+1,e+3,e+5,\dots,2je-e+1)$ is $3j$.
If $e>2$, the above degrees are $1$ and $j+1$, respectively.
\end{proof}

\section{Main results}\label{sec:main}

To begin with, we introduce some notation.
We will write
\[
M \cong L_1 \hspace{2pt} |\hspace{1pt} L_2 \hspace{2pt} |\hspace{1pt} \dots \hspace{2pt} |\hspace{1pt} L_r
\]
to mean that the module $M$ is uniserial with composition factors $L_1, \dots, L_r$ listed from socle to head.
We will write
\[
M \cong L_1 \hspace{2pt} |\hspace{1pt} \dots \hspace{2pt} |\hspace{1pt} L_r \oplus \dots \oplus N_1 \hspace{2pt} |\hspace{1pt} \dots \hspace{2pt} |\hspace{1pt} N_r
\]
to mean that the module is isomorphic to the direct sum of uniserial modules $L_1 \hspace{2pt} |\hspace{1pt} \dots \hspace{2pt} |\hspace{1pt} L_r, \dots, N_1 \hspace{2pt} |\hspace{1pt} \dots \hspace{2pt} |\hspace{1pt} N_r$. We will also have occasion, in \cref{prop:j=2}(vi) and \cref{eg:5factorsummand}, to indicate module structure via Alperin diagrams, see \cite{alperin}.

We will assume that $k,j\geq 1$ throughout, otherwise the Specht module $\spe{((ke),(je))}$ is simple.
First, we handle the case where $j=1$.

\begin{prop}\label{prop:j=1}
Suppose $k\geq 1$, and let $n = ke + e$.
Then if $p\nmid k+1$, $\spe{((ke),(e))}$ is semisimple and is isomorphic to $\D{\left( \left(ke,1\right),(e-1) \right)}\langle 1\rangle \oplus \D{((n),\varnothing)}\langle 1\rangle$.
If $p \mid k+1$, then
\[
\spe{((ke),(e))} \cong \D{((n),\varnothing)}\langle 1\rangle \hspace{2pt} |\hspace{1pt} \D{\left( \left(ke,1\right),(e-1) \right)}\langle 1\rangle \hspace{2pt} |\hspace{1pt} \D{((n),\varnothing)}\langle 1\rangle.
\]
\end{prop}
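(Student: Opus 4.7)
The plan is to transfer the question to the classical Schur algebra via the Morita equivalence of \cref{lem:moritaequiv}. By \cref{indprodS} we have
\[
\spe{((ke),(e))} \cong \widehat{\scrm}\bigl(\Delta^{k+1}(1^k) \otimes \Delta^{k+1}(1)\bigr)\langle 1\rangle,
\]
and the Pieri rule \cref{lem:Pierifilt} yields a short exact sequence
\[
0 \longrightarrow \Delta(2,1^{k-1}) \longrightarrow \Delta(1^k) \otimes \Delta(1) \longrightarrow \Delta(1^{k+1}) \longrightarrow 0
\]
in $S_\bbf(k+1,k+1)$-mod. Since $\Delta(1^{k+1}) = \sL{1^{k+1}}$, and by \cref{cor:irredweyls}(i) (together with its proof) the Weyl module $\Delta(2,1^{k-1})$ is simple iff $p\nmid k+1$ and otherwise is uniserial with socle $\sL{1^{k+1}}$ and head $\sL{2,1^{k-1}}$, the composition factors of the tensor product are now known.

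Next I would count the indecomposable summands via \cref{lem:no.indecompsummands,thm:Henkeformula}: the permutation module $\sM{k,1}$ has two summands if $p\nmid k+1$ (the partitions $(k+1)$ and $(k,1)$ both satisfy the inequality) and only one summand (indexed by $(k,1)$) if $p\mid k+1$. If $p\nmid k+1$, both Weyl factors are simple and the two-summand count forces the Weyl filtration to split, giving $\widehat{\scrm}(\sL{1^{k+1}})\langle 1\rangle \oplus \widehat{\scrm}(\sL{2,1^{k-1}})\langle 1\rangle$. If $p\mid k+1$, the Specht is indecomposable with composition factors $\sL{1^{k+1}},\sL{2,1^{k-1}},\sL{1^{k+1}}$; I would argue that any simple summand of the socle outside the submodule $\Delta(2,1^{k-1})$ would inject into the quotient $\sL{1^{k+1}}$ and split off, contradicting indecomposability, so the socle equals the simple $\sL{1^{k+1}}$ from $\soc\Delta(2,1^{k-1})$. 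By the self-duality \cref{prop:selfdualspecht}, the head is likewise simple, forcing the uniserial Loewy structure $\sL{1^{k+1}}\,|\,\sL{2,1^{k-1}}\,|\,\sL{1^{k+1}}$.

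Finally, the two simple factors must be identified in the KLR notation. The homomorphism $\alpha_{k,1}$ of \cref{prop:alphahom} (specialised to $j=1$) has domain $\spe{((n),\varnothing)}$, which is one-dimensional and hence equals the simple module $\D{((n),\varnothing)}$; its nonzero degree-one image embeds $\D{((n),\varnothing)}\langle 1\rangle$ into the socle of $\spe{((ke),(e))}$, so $\widehat{\scrm}(\sL{1^{k+1}})\langle 1\rangle\cong\D{((n),\varnothing)}\langle 1\rangle$. The remaining simple factor must therefore be $\widehat{\scrm}(\sL{2,1^{k-1}})\langle 1\rangle\cong\D{((ke,1),(e-1))}\langle 1\rangle$, which is also witnessed directly by the nonzero homomorphism $\gamma_{k,1}$ of \cref{prop:gammahom}. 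A check of the tableau degrees in \cref{defn:regularspecht} (or a direct appeal to self-duality, yielding $a+c=2$ and $b=1$ for the three Loewy layers) confirms the grading shifts stated in the proposition.

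The main obstacle is the rigidity argument for uniseriality when $p\mid k+1$: one must simultaneously exploit indecomposability from the summand count and the self-duality of \cref{prop:selfdualspecht} to rule out semisimple pieces in the socle and head. The identification of the Morita images $\widehat{\scrm}(\sL{\cdot})$ with KLR simples $\D{\cdot}$ is the other subtle point, but is handled cleanly by the Specht homomorphisms $\alpha_{k,1}$ and $\gamma_{k,1}$ that were constructed precisely for this purpose in \cref{sec:homs}.
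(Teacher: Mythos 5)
Your argument is correct and follows the same core path as the paper's proof: transfer to the Schur algebra via the Morita equivalence of \cref{lem:moritaequiv}, filter $\Delta(1^k)\otimes\Delta(1)$ by Weyl modules using \cref{lem:Pierifilt}, apply \cref{cor:irredweyls} to see when $\Delta(2,1^{k-1})$ is simple, and use the homomorphisms $\alpha_{k,1}$ and $\gamma_{k,1}$ of \cref{prop:alphahom,prop:gammahom} to match the two simple factors with $\D{((n),\varnothing)}\langle 1\rangle$ and $\D{((ke,1),(e-1))}\langle 1\rangle$. The one real deviation is how you settle the module structure once the composition factors are known: you count indecomposable summands via \cref{thm:Henkeformula} and \cref{lem:no.indecompsummands} to conclude that the Specht module splits into two pieces when $p \nmid k+1$ and is indecomposable when $p\mid k+1$, and then combine with \cref{prop:selfdualspecht} to pin down the Loewy structure. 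The paper instead extracts everything directly from self-duality of $\Delta(1^k)\otimes\Delta(1)$ over the Schur algebra: a self-dual module with two distinct simple factors each occurring once must be semisimple, and in the $p\mid k+1$ case self-duality together with the Weyl filtration forces the uniserial picture without needing to know the number of summands in advance. Your route is a valid alternative and in fact mirrors the strategy the paper adopts for larger $j$ (e.g.\ \cref{prop:j=2}(v)--(vi)), at the mild cost of one extra lemma; both arguments are equally sound here.
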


\begin{proof}
Using the Morita equivalence $\scrm$ of \cref{sec:imag}, we have that $\scrm(\spe{((ke),(e))}) \cong \Delta(1^k) \otimes \Delta(1)$, which has a filtration by the modules $\Delta(1^{k+1}) = \sL{1^{k+1}}$ and $\Delta(2,1^{k-1})$, by \cref{lem:Pierifilt}.
By \cref{cor:irredweyls}, $\Delta(2,1^{k-1})$ is irreducible if and only if $p\nmid k+1$.
Since taking tensor products commutes with duality, and all simple modules of Schur algebras are self-dual, $\Delta(1^k) \otimes \Delta(1) = \sL{1^k}\otimes \sL1$ must be self-dual.
It follows that if $p \nmid k+1$, $\Delta(1^k) \otimes \Delta(1) \cong \sL{1^{k+1}} \oplus \sL{2,1^{k-1}}$.
This implies that $\scrm(\spe{((ke),(e))})$ must also be a direct sum of two simple modules, since it is the Morita pre-image of $\Delta(1^k) \otimes \Delta(1)$.
Finally, by \cref{prop:alphahom,prop:gammahom}, we know that $\D{\left( \left(ke,1\right),(e-1) \right)}\langle 1\rangle$ and $\D{((n),\varnothing)}\langle 1\rangle$ are composition factors of $\scrm(\spe{((ke),(e))})$, which completes the proof if $p\nmid k+1$.

If $p\mid k+1$, then $\Delta(2,1^{k-1}) \cong \sL{1^{k+1}} \hspace{2pt} |\hspace{1pt} \sL{2,1^{k-1}}$. 
Since $\Delta(1^k) \otimes \Delta(1)$ must be self-dual, it follows that $\Delta(1^k) \otimes \Delta(1) \cong \sL{1^{k+1}} \hspace{2pt} |\hspace{1pt} \sL{2,1^{k-1}} \hspace{2pt} |\hspace{1pt} \sL{1^{k+1}}$.
\cref{prop:alphahom} tells us that $\D{((n),\varnothing)}\langle 1\rangle$ is a submodule of $\spe{((ke),(e))}$, while \cref{prop:gammahom} implies that $\D{\left( \left(ke,1\right),(e-1) \right)}\langle 1\rangle$ is a composition factor of $\spe{((ke),(e))}$.
It follows that $\spe{((ke),(e))} \cong \D{((n),\varnothing)}\langle 1\rangle \hspace{2pt} |\hspace{1pt} \D{\left( \left(ke,1\right),(e-1) \right)}\langle 1\rangle \hspace{2pt} |\hspace{1pt} \D{((n),\varnothing)}\langle r \rangle$ for some $r\in\bbz$.
To see that $r=1$ as claimed, we apply \cref{prop:selfdualspecht}.
\end{proof}

\subsection{The semisimple cases}\label{subsec:ss}

In this section, we will handle all the semisimple Specht modules and determine their decompositions.

\begin{thm}\label{thm:k>jss}
Suppose $k \geq j\geq 1$.
Then $\spe{((ke),(je))}$ is semisimple if and only if one of the following holds:
\begin{itemize}
\item $p\neq 2$ and $p$ does not divide any of the integers $k+j, k+j-1, \dots, k-j+2$;

\item $p=2$, $j=1$, and $k$ is even;

\item $p=2$, $j=2$, and $k\equiv 1 \pmod 4$.
\end{itemize}
When semisimple, $\spe{((ke),(je))}$ is isomorphic to
\begin{align*}
&\D{(((k+j-1)e,1), (e-1))} \langle j\rangle
\oplus \D{(((k+j-2)e,e+1), (e-1))} \langle j\rangle
%\oplus \D{(((k+j-3)e,2e+1), (e-1))} \langle j\rangle
\oplus \cdots \oplus \D{((ke,(j-1)e+1),(e-1))} \langle j\rangle
\oplus \D{((ke+je),\varnothing)}\langle j\rangle\\
= \; &\bigoplus_{r=1}^{j} \D{\left( \left((k+j-r)e,(r-1)e+1\right),(e-1) \right)}\langle j\rangle \oplus \D{((ke+je),\varnothing)}\langle j\rangle.
\end{align*}
\end{thm}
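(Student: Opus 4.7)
The plan is to transport the problem to the classical Schur algebra via the Morita equivalence of \cref{lem:moritaequiv}. Writing $n = k+j$, we have
\[
\scrm(\spe{((ke),(je))}) \cong \Delta^n(1^k) \otimes \Delta^n(1^j),
\]
which is a tilting $S_\bbf(n,n)$-module (being a tensor product of the modules $\Delta(1^a) = \sL{1^a} = T(1^a)$, which are simultaneously simple, standard, costandard, and tilting). By Pieri (\cref{lem:Pierifilt}), it admits a $\Delta$-filtration whose factors are exactly $\Delta(1^{k+j}), \Delta(2,1^{k+j-2}), \dots, \Delta(2^j, 1^{k-j})$, each appearing with multiplicity one, and semisimplicity transfers across the Morita equivalence.

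I would next show that the tensor product is semisimple if and only if each of these Weyl modules is simple. For the ``only if'' direction, any sub-quotient of a semisimple module is semisimple and any standard module $\Delta(\nu)$ is indecomposable, so if the tensor product is semisimple then the $\Delta$-factors in any filtration must each satisfy $\Delta(\nu) = \sL{\nu}$. For the ``if'' direction, when every $\Delta$-factor is simple, the tensor product has pairwise distinct composition factors $\sL{2^r, 1^{k+j-2r}}$; combining this with the self-duality of the tensor product (both tensor factors being self-dual simples, and all simple $S_\bbf(n,n)$-modules being self-dual) forces the module to split as $\bigoplus_{r=0}^{j} \sL{2^r, 1^{k+j-2r}}$. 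Applying \cref{cor:irredweyls} (with its $n$ playing the role of our $k+j$) then translates simultaneous simplicity of $\Delta(1^{k+j}), \dots, \Delta(2^j, 1^{k-j})$ into the three bulleted conditions in the theorem statement.

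It then remains to identify each simple summand on the Specht side. The summand $\widehat\scrm(\sL{1^{k+j}})$ matches $\D{((ke+je),\varnothing)}\langle j \rangle$ via \cref{Ske,indprodS}. For $r = 1, \dots, j$, the plan is to use the degree-$j$ homomorphisms $\gamma_{k+j-r, r}$ of \cref{prop:gammahom} together with an induction on $j$ based at \cref{prop:j=1}, producing $\D{\left( \left((k+j-r)e,(r-1)e+1\right),(e-1) \right)}\langle j \rangle$ as a simple summand; the highest-weight word criterion of \cref{cor:wordrec} then matches this constituent with $\widehat\scrm(\sL{2^r, 1^{k+j-2r}})$ on the Schur side. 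The uniform grading shift $\langle j \rangle$ is forced by combining \cref{indprodS} with the self-duality $\spe{((ke),(je))}^\circledast \cong \spe{((ke),(je))}\langle -2j \rangle$ of \cref{prop:selfdualspecht}, together with the self-duality of each simple $\D{\mu}$.

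The main obstacle will be the identification step: verifying that the image produced by each $\gamma_{k+j-r,r}$, combined with the inductive decomposition of its source Specht module, really is the claimed simple $\D{\left( \left((k+j-r)e,(r-1)e+1\right),(e-1) \right)}$, and that under the Morita equivalence this corresponds to $\sL{2^r, 1^{k+j-2r}}$ with the correct grading shift. This requires careful bookkeeping of residue words in conjunction with \cref{KMwords,cor:wordrec}, together with sufficient control over the head and socle of the source of $\gamma$ supplied by the inductive hypothesis.
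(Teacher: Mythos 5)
Your plan follows essentially the same route the paper takes: Morita transport to the Schur algebra, the Pieri $\Delta$-filtration, the observation that Weyl modules are indecomposable (so semisimplicity of the Specht module forces every $\Delta$-factor to be simple), translation via \cref{cor:irredweyls}, and self-duality to obtain the splitting. For the identification of summands and the grading shifts you propose two small variants that are worth noting. The paper identifies the Morita labels by an induction on $j$ with $k+j$ fixed, comparing $\scrm(\spe{((ke),(je))})$ to $\scrm(\spe{((ke+e),(je-e))})$, so that the matching of $\D\mu \leftrightarrow \sL\nu$ is carried forward from the previous step and only one new factor needs to be pinned down via $\gamma_{k,j}$; you instead propose to use \cref{cor:wordrec} to match each constituent directly by its Gelfand–Graev words. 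Both work, but the inductive transfer used in the paper avoids the explicit residue-word bookkeeping you flag as the main obstacle. Your grading-shift argument — combining $\spe{((ke),(je))}^\circledast \cong \spe{((ke),(je))}\langle -2j \rangle$ with the self-duality of each $\D\mu$ and the fact that each simple occurs once, forcing every shift to equal $j$ — is actually slightly cleaner than the paper's appeal to the degrees of $\alpha$ and $\gamma$. Two small inaccuracies to fix: $\gamma_{k+j-r,r}$ has degree $r$ (not $j$), with codomain $\spe{(((k+j-r)e),(re))}$, so these maps land in the intermediate Specht modules of the induction rather than directly in $\spe{((ke),(je))}$; and the jump from ``self-dual with pairwise distinct composition factors'' to ``splits'' is not automatic for arbitrary modules — you should invoke the tilting structure (so each indecomposable summand is some $T(\lambda)$, and with $\Delta(\lambda)=L(\lambda)=\nabla(\lambda)$ plus multiplicity one of $L(\lambda)$ you get $T(\lambda)=L(\lambda)$), which the paper also leaves implicit.
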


\begin{proof}
For $j=1$, the result is contained in \cref{prop:j=1}.
So assume now that $j>1$.

Note that under the Morita equivalence $\scrm$, our Specht module is mapped to a module that is filtered by the Weyl modules $\Delta{(1^{k+j})}, \Delta{(2,1^{k+j-2})}, \dots, \Delta{(2^j,1^{k-j})}$, by \cref{lem:Pierifilt}.
Since Weyl modules are \emph{always} indecomposable, $\spe{((ke),(je))}$ cannot possibly be semisimple unless each of those Weyl modules is irreducible.
By \cref{cor:irredweyls}, this happens exactly in the cases of our theorem statement, which proves the `only if' part of the theorem.

For the `if' part, it suffices to note that under the stated conditions, $\scrm(\spe{((ke),(je))})$ is a self-dual module with simple factors $\sL{1^{k+j}}, \sL{2,1^{k+j-2}}, \dots, \sL{2^j,1^{k-j}}$, each occurring exactly once.

Finally, we prove the stated decomposition by induction on $j$, keeping $k+j$ fixed.
For $j=1$, the result is contained in \cref{prop:j=1}, so we may assume that
\[
\uspe{((ke+e),(je-e))} \cong \bigoplus_{r=1}^{j-1} \uD{\left( \left((k+j-r)e,(r-1)e+1\right),(e-1) \right)} \oplus \uD{((ke+je),\varnothing)}.
\]
We also know that
\[
\scrm(\spe{((ke),(je))}) \cong \sL{1^{k+j}} \oplus \sL{2,1^{k+j-2}} \oplus \dots \oplus \sL{2^{j-1},1^{k-j+2}} \oplus \sL{2^j,1^{k-j}},
\]
while 
\[
\scrm(\spe{((ke+e),(je-e))}) \cong \sL{1^{k+j}} \oplus \sL{2,1^{k+j-2}} \oplus \dots \oplus \sL{2^{j-1},1^{k-j+2}}.
\]
It follows that $\scrm(\D{((ke+je),\varnothing)}) \cong \sL{1^{k+j}}$ and $\scrm(\D{\left( \left((k+j-r)e,(r-1)e+1\right),(e-1) \right)}) \cong \sL{2^r,1^{k+j-2r}}$ for $r=1, \dots, j-1$.
To see that $\scrm(\D{\left( \left(ke,je-e+1\right),(e-1) \right)}) \cong \sL{2^j,1^{k-j}}$, it suffices to note that $\spe{((ke+e),(je-e))} \subseteq \spe{((ke),(je))}$, and that these two Specht modules only differ by a single extra simple summand whose image under $\scrm$ is $\sL{2^j,1^{k-j}}$.
\cref{prop:gammahom} tells us that $\D{\left( \left(ke,je-e+1\right),(e-1) \right)}\langle j \rangle$ is a composition factor of $\spe{((ke),(je))}$.
Since this simple is not already accounted for, the result follows.
To see the grading shifts of the other simple factors, we may argue the same as in the proof of \cref{prop:j=1}.
\end{proof}

\begin{eg}
For $e=3$, $k=7$, and $j=5$,
%\begin{multline*}
%\spe{((21),(18))} \langle -6 \rangle \cong \D{((39),\varnothing)} \oplus \D{((36,1),(2))} 
%\oplus \D{((33,4),(2))} 
%\oplus \D{((30,7),(2))} 
%\oplus \D{((27,10),(2))}\\ 
%\oplus \D{((24,13),(2))} 
%\oplus \D{((21,16),(2))}.
%\end{multline*}
\[
\spe{((21),(15))} \langle -5 \rangle \cong \D{((36),\varnothing)} \oplus \D{((33,1),(2))} 
\oplus \D{((30,4),(2))} 
\oplus \D{((27,7),(2))} 
\oplus \D{((24,10),(2))}\\ 
\oplus \D{((21,13),(2))} .
\]
\end{eg}

It will be useful to introduce some notation for the combinatorial map on labels matching an $\sL\la$ with $\D\mu$ that is implicit in the above proof.

\begin{defn}\label{def:moritamaponlabels}
We let $\scrt$ denote the combinatorial map that sends a two-column partition $\mu$ to a bipartition $\la$ precisely when $\scrm(\D\la) = \sL\mu$.
In particular, given a two-column partition $\mu$ of $n$, we define
\begin{align*}
\scrt(\mu):=
\begin{cases}
((ne),\varnothing)
&\text{ if $\mu=(1^n)$;}\\
(((n-m)e,(m-1)e+1),(e-1))
&\text{ if $\mu = (2^m,1^{n-2m}) $ with $m\geqslant 1$.  }
\end{cases}
\end{align*}
\end{defn}

\begin{eg}
For any $e$, $\scrt(1^7) = ((7e),\varnothing)$, $\scrt(2,1^5) = ((6e,1),(e-1))$, $\scrt(2^2,1^3) = ((5e,e+1),(e-1))$, and $\scrt(2^3,1) = ((4e,2e+1),(e-1))$.
\end{eg}

From \cref{thm:k>jss}, applying $i$-induction functors yields:

\begin{thm}\label{thm:generalised k>j ss}
Let $\la = ((ke + a, 1^b), (je + a, 1^b))$, for some $k \geq j\geq 1$, with $0 < a \leq e$ and $0 \leq b < e$ with $a + b \neq e$, or for $a = b = 0$ and set $n:=|\la|$.
Suppose that $p$ does not divide any of the integers $k+j, k+j-1, \dots, k-j+2$, or else that $p = 2 = j$ and $k\equiv 1 \pmod 4$.
Then,
%\[
%\spe\la \cong \begin{cases}
%\bigoplus_{r=1}^{j} \D{\left( \left((k+j-r)e+a,(r-1)e+1+a\right),(e-1) \right)}\langle j \rangle \oplus \D{(((k+j)e+a), (a))} \langle j \rangle & \text{if } b = 0,\\
%\\
%\bigoplus_{r=1}^{j} \D{\left( \left((k+j-r)e+a,(r-1)e+1+a, 1^{b-1}\right),(e,1^b) \right)}\langle j \rangle \oplus \D{(((k+j)e+a,1^b),(a,1^b))} \langle j \rangle & \text{if } b \geq 1,
%\end{cases}
%\]
\[
\spe\la \cong \begin{cases}
\bigoplus_{r=1}^{j} \D{\left( \left(n-re-a,(r-1)e+1+a\right),(e-1) \right)}\langle j \rangle \oplus \D{((n-a), (a))} \langle j \rangle & \text{if } b = 0,\\
\\
\bigoplus_{r=1}^{j} \D{\left( \left(n-re-a-2b,(r-1)e+1+a, 1^{b-1}\right),(e,1^b) \right)}\langle j \rangle \oplus \D{((n-a-2b,1^b),(a,1^b))} \langle j \rangle & \text{if } b \geq 1,
\end{cases}
\]
\end{thm}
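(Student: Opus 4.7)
The plan is to derive this result from \cref{thm:k>jss} by applying a suitable composition of graded divided power functors $f_i^{(n)}$ from \cref{subsec:divpowers}, in the same spirit as the proof of \cref{thm:decompbihooks} in the prequel paper.

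The first step is to exhibit a composition $F$ of induction functors which carries $\spe{((ke),(je))}$ to $\spe\la$ up to a grading shift. Starting from $((ke),(je))$, the bipartition $\la$ is built by simultaneously adding, in both components, the $a$ new nodes at the end of row $1$ with residues $0,1,\dots,a-1$ (in that order), followed by the $b$ new nodes descending column $1$ with residues $-1,-2,\dots,-b$. At each stage the current bipartition carries exactly two $i$-cogood nodes for the relevant $i$ (one per component), so the composition
\[
F = f_{-b}^{(2)} \cdots f_{-1}^{(2)} \, f_{a-1}^{(2)} \cdots f_1^{(2)} f_0^{(2)}
\]
sends $\spe{((ke),(je))}$ to $\spe\la$ up to a grading shift, by the same branching argument used in the prequel to establish \cref{thm:decompbihooks}.

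The second step is to evaluate $F$ on each simple summand appearing in \cref{thm:k>jss}. Since induction commutes with direct sums, this can be done separately for each simple $\D\mu$. For every $\mu$ arising and every $i$ appearing in $F$, I will check that $\mu$ has exactly two $i$-cogood nodes at the stage when $f_i^{(2)}$ is applied; whenever this holds, \cref{lem:grbranch} together with $f_i^{n}\cong [n]!\, f_i^{(n)}$ implies that $f_i^{(2)} \D\mu \cong \D{\tilde f_i^{\,2} \mu}$, up to a grading shift. Iterating, each summand of $\spe{((ke),(je))}$ is carried by $F$ to a single simple module whose label is obtained by adjoining the prescribed nodes. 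The cases $b=0$ and $b\geq 1$ are handled separately: the summand $\D{((ke+je),\varnothing)}$ produces the unique non-bihook summand $\D{((n-a),(a))}$ (or $\D{((n-a-2b,1^b),(a,1^b))}$), while the bihook summands $\D{\left( \left((k+j-r)e,(r-1)e+1\right),(e-1) \right)}$ produce the remaining bihook summands claimed in the statement.

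The main obstacle is the cogood-node bookkeeping: one must verify, at every intermediate stage and for every simple summand, that the relevant $i$-signature consists of exactly two $+$s with no unmatched $-$s, so that $f_i^{(2)}$ indeed adds the two cogood nodes we want and introduces no additional composition factors. This is most delicate for the column residues $-1,\dots,-b$, which can interact with the pre-existing part of shape $(e-1)$ in the second component of the bihook summands. The specific form of the labels in \cref{thm:k>jss} is precisely what makes this verification succeed cleanly, and the grading shift $\langle j\rangle$ from \cref{thm:k>jss} is preserved by $F$ because the shifts $\langle \varphi_i(\mu)-1\rangle$ contributed by each $f_i^{(2)}$ cancel in the symmetric application over both components.
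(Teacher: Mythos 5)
Your approach is the same as the paper's in outline, but it has a genuine gap: you only set up the composition of functors for the case $a+b < e$. When $a+b > e$ (which the hypotheses allow, since the only exclusion is $a+b\neq e$), the row residues $\{0,1,\dots,a-1\}$ and the column residues $\{e-1,e-2,\dots,e-b\}$ overlap in $\{e-b,\dots,a-1\}$. Concretely, after you have applied $f_{a-1}^{(2)}\cdots f_0^{(2)}$ to reach the bipartition $((ke+a),(je+a))$, there are not two but \emph{four} addable $(e-1)$-nodes (one at the end of row $1$ and one at the start of row $2$, in each of the two components) whenever $a-1\geq e-b$, and similarly for the other overlapping residues. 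So your claim that ``at each stage the current bipartition carries exactly two $i$-cogood nodes'' fails, and the naive composition $f_{-b}^{(2)}\cdots f_{-1}^{(2)}f_{a-1}^{(2)}\cdots f_0^{(2)}$ does not land on $\spe\la$. The paper fixes this by reordering the functors and inserting a single $f_{a-1}^{(4)}$ at the step where four conormal $(a{-}1)$-nodes appear, then checking separately that $f_{a-1}^{(4)}\D\nu \cong \D{\tilde f_{a-1}^{4}\nu}$ via \cref{lem:grbranch} and the relation $f_i^{4}\cong[4]!\,f_i^{(4)}$.

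Two smaller points. First, your parenthetical ``(one per component)'' for the cogood-node locations is inaccurate for the bihook summands $\D{(((k+j-r)e,(r-1)e+1),(e-1))}$: for most residues $i$ both conormal $i$-nodes sit in the first component (rows $1$ and $2$), and for $i=e-2$ one even has three addable nodes in the first component together with a removable one in the second, with exactly two conormal nodes surviving. The conclusion is still that there are exactly two conormal nodes, but your stated mechanism is wrong and would not survive a careful verification. Second, the justification ``the shifts $\langle\varphi_i(\mu)-1\rangle$ cancel in the symmetric application over both components'' is not the right explanation; the cancellation comes from $f_i^{n}\cong[n]!\,f_i^{(n)}$, which forces $f_i^{(2)}\D\mu\cong\D{\tilde f_i^{\,2}\mu}$ with no shift whenever $\varphi_i(\mu)=2$, combined with the fact that $f_i^{(r)}$ is a graded functor so commutes with $\langle j\rangle$. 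None of these corrections change the overall strategy, which is indeed the one the paper uses, but as written the proposal proves only the $a+b<e$ subcase and even there rests on an imprecise description of the conormal-node positions.
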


\begin{proof}
The proof is in essence a similar argument to that of~\cite[Propositions 4.3 and 4.4]{ss20}.
As there, we apply the divided power functors of \cref{subsec:divpowers} to $\spe{((ke),(je))}$, whose decomposition we know by \cref{thm:k>jss}.
In particular, if $a+b<e$, we examine
\[
f_{e-b}^{(2)} f_{e-b+1}^{(2)} \dots f_{e-1}^{(2)} \cdot f_{a-1}^{(2)} f_{a-2}^{(2)} \cdots f_{0}^{(2)} \spe{((ke),(je))},
\]
while if $a+b>e$, we examine
\[
f_{e-b}^{(2)} f_{e-b+1}^{(2)} \cdots f_{a-3}^{(2)} f_{a-2}^{(2)} \cdot f_{a-1}^{(4)} \cdot f_{a}^{(2)} f_{a+1}^{(2)} \dots f_{e-1}^{(2)} \cdot f_{a-2}^{(2)} f_{a-3}^{(2)} \cdots f_{0}^{(2)} \spe{((ke),(je))}.
\]
By the cyclotomic analogues of the arguments used in the proof of \cite[Theorem~3.2]{ls14}, if $\phi_i = \phi_i(\spe\mu)$ is the number of addable $i$-nodes of $\mu$, then $f_i^{(\phi_i)} \spe\mu$ is the Specht module labelled by the bipartition obtained by adding all of these addable $i$-nodes to $\mu$.
At each step, we apply a divided power $f_i^{(2)}$ to a Specht module $\spe\mu$, where $\mu$ has exactly two addable $i$-nodes and no removable $i$-nodes, and thus $f_i^{(2)}\spe\mu = \spe\nu$, where $\nu$ is obtained from $\mu$ by adding both of these $i$-nodes.
The only exception is that if $a+b>e$, there is one step at which $\mu$ has exactly four addable $i$-nodes, and $f_i^{(4)}\spe\mu = \spe\nu$, where $\nu$ is obtained from $\mu$ by adding all four of these $i$-nodes.
So each case above yields the Specht module $\spe\la$.

Now, \cref{thm:k>jss} gives the decomposition of $\spe{((ke),(je))}$, and so it suffices to check the result of applying our series of divided power functors (which are exact) to the simple modules appearing in the decomposition.
Indeed, we will show that for each simple summand and at each step, we have $f_i^{(2)}\D\mu = \D\nu$ for some $\nu$, eventually leading to the desired decomposition.

First, we begin with the case $b=0$, and examine the summand $\D{((ke+je),\varnothing)}\langle j \rangle$.
At each step in $f_{a-1}^{(2)} f_{a-2}^{(2)} \cdots f_{0}^{(2)} \D{((ke+je),\varnothing)}$, we have some $f_i^{(2)} \D\mu$, where $\mu$ has exactly two addable $i$-nodes (one at the end of the first row of each component), and no removable $i$-nodes.
Thus both addable $i$-nodes are conormal, and it follows from \cref{lem:grbranch} that $f_i^2 \D\mu = \D{\tilde{f_i}^2 \mu} \langle -1 \rangle \oplus \D{\tilde{f_i}^2 \mu} \langle 1 \rangle$, and thus $f_i^{(2)} \D\mu =  \D{\tilde{f_i}^2 \mu}$.

A similar argument applies to the other summands, noting that if $i < e-2$, then the bipartition $\left( \left((j+k-r)e+i,(r-1)e+1+i\right),(e-1) \right)$ has exactly two addable $i$-nodes -- in the first two rows of the first component -- and no removable $i$-nodes.
If $i=e-2$, then the above bipartition has an addable $i$-node in each of the first three rows of the first component, and a removable $i$-node at the end of the first row of the second component.
In both cases, there are exactly two conormal $i$-nodes.

The case $b\neq 0$ is almost identical, with conormal nodes now appearing in the first column of each component in many of the steps.
The only non-trivial difference is when $a+b > e$, and we are applying the functor $f_{a-1}^{(4)}$ to a simple module $f_{a}^{(2)} f_{a+1}^{(2)} \dots f_{e-1}^{(2)} \cdot f_{a-2}^{(2)} f_{a-3}^{(2)} \cdots f_{0}^{(2)} \D\mu$.
In each case, we are applying $f_{a-1}^{(4)}$ to a module $\D\nu = f_{a}^{(2)} f_{a+1}^{(2)} \dots f_{e-1}^{(2)} \cdot f_{a-2}^{(2)} f_{a-3}^{(2)} \cdots f_{0}^{(2)} \D\mu$ indexed by a bipartition $\nu$ that has exactly four addable $(a{-}1)$-nodes and no removable $(a{-}1)$-nodes; in particular, $\nu$ has exactly four conormal $(a{-}1)$-nodes.
Applying the functor $f_{a-1}$ to the head four times tells us that the desired module has head $\D{\tilde{f}_{a-1}^{(4)}\nu}\langle -6 \rangle$ and a similar application to the socle tells us that the desired module has socle $\D{\tilde{f}_{a-1}^{(4)}\nu}\langle 6 \rangle$, both by \cref{lem:grbranch}.
Since $f_i^{4} \cong [4]! f_i^{(4)}$, it follows that $f_{a-1}^{(4)} \D\nu = \D{\tilde{f}_{a-1}^4 \nu}$.
Finally, the degree shifts are unchanged by application of these functors, since $f_i^{(r)} (\D\mu \langle k \rangle) = f_i^{(r)} (\D\mu) \langle k \rangle$, as \(f_i^{(r)}\) is a graded functor.
\end{proof}

It will be useful to introduce some notation for the combinatorial map on simple labels taking labels of composition factors of $\spe{((ke),(je))}$ to the corresponding labels of composition factors of $\spe\la$ as in the above proof.

\begin{defn}\label{def:bipartitioninduction}
We let $\calf_{a,b}$ denote the map on bipartitions defined by
\[
\calf_{a,b}(\mu) = \begin{cases}
\tilde{f}_{e-b}^{2} \tilde{f}_{e-b+1}^{2} \dots \tilde{f}_{e-1}^{2} \cdot \tilde{f}_{a-1}^{2} \tilde{f}_{a-2}^{2} \cdots \tilde{f}_{0}^{2} \mu & \text{ if $a+b<e$,}\\
\\
\tilde{f}_{e-b}^{2} \tilde{f}_{e-b+1}^{2} \cdots \tilde{f}_{a-3}^{2} \tilde{f}_{a-2}^{2} \cdot \tilde{f}_{a-1}^{4} \cdot \tilde{f}_{a}^{2} \tilde{f}_{a+1}^{2} \dots \tilde{f}_{e-1}^{2} \cdot \tilde{f}_{a-2}^{2} \tilde{f}_{a-3}^{2} \cdots \tilde{f}_{0}^{2} \mu & \text{ if $a+b>e$.}
\end{cases}
\]
More explicitly on the bipartitions appearing in \cref{thm:generalised k>j ss},
\[
\calf_{a,b}: ((ke+je),\varnothing) \mapsto ((ke+je+a,1^b),(a,1^b)) \text{ and}
\]
\[
\calf_{a,b}: ((ke+je-re,re-e+1),(e-1)) \longmapsto \begin{cases}
((ke+je-re+a,re-e+1+a),(e-1)) & \text{ if $b=0$,}\\
((ke+je-re+a,re-e+1+a,1^{b-1}),(e,1^b)) & \text{ if $b>0$.}
\end{cases}
\]
We define $-\calf_{a,b}$ as above but with all residue subscripts replaced by their negatives.
\end{defn}

\begin{eg}
Let $e=4$.
Then \cref{thm:k>jss} tells us that $\spe{((4),(4))} \cong \D{((4,1),(3))}\langle 1 \rangle \oplus \D{((8),\varnothing)}\langle 1 \rangle$.
We get from ${((4),(4))}$ to ${((6,1),(6,1))}$ by applying $\calf_{2,1}=\tilde{f}_3^{2} \tilde{f}_1^{2} \tilde{f}_0^{2}$, depicted as follows.
\[
\begin{tikzpicture}[scale=0.8,every node/.style={scale=0.8}]
\fill [] (2,0) node {$\xmapsto{\tilde{f}_0^{2}}$};
\fill [] (6,0) node {$\xmapsto{\tilde{f}_1^{2}}$};
\fill [] (10,0) node {$\xmapsto{\tilde{f}_3^{2}}$};
\fill [] (0,0) node {$\gyoung(0123,,0123)$};
\fill [] (4,0) node {$\gyoung(0123!\lc0,,!\wh0123!\lc0)$};
\fill [] (8,0) node {$\gyoung(01230!\lc1,,!\wh01230!\lc1)$};
\fill [] (12,0) node {$\gyoung(012301,!\lc3,,!\wh012301,!\lc3)$};
\end{tikzpicture}
\]
\cref{thm:generalised k>j ss} tells us that 
%\[
$\spe{((6,1),(6,1))} \cong \D{((6,3),(4,1))}\langle 1 \rangle \oplus \D{((10,1),(2,1))}\langle 1 \rangle$.
%\]
Now we focus on the first summand, letting $\mu = ((4,1),(3))$.
It follows from above that the first summand of $\spe{((6,1),(6,1))}$ is obtained by applying $\calf_{2,1}$ to $\mu$, depicted as follows.
\[
\begin{tikzpicture}[scale=0.8,every node/.style={scale=0.8}]
\fill [] (2,0) node {$\xmapsto{\tilde{f}_0^{2}}$};
\fill [] (6,0) node {$\xmapsto{\tilde{f}_1^{2}}$};
\fill [] (10,0) node {$\xmapsto{\tilde{f}_3^{2}}$};
\fill [] (0,0) node {$\gyoung(0123,3,,012)$};
\fill [] (4,0) node {$\gyoung(0123!\lc0,!\wh3!\lc0,,!\wh012)$};
\fill [] (8,0) node {$\gyoung(01230!\lc1,!\wh30!\lc1,,!\wh012)$};
\fill [] (12,0) node {$\gyoung(012301,301,,012!\lc3,3)$};
\end{tikzpicture}
\]
Similarly, we may apply $\calf_{2,3}= \tilde{f}_1^{4} \tilde{f}_2^{2} \tilde{f}_3^{2} \tilde{f}_0^{2}$ to $((4),(4))$ to obtain ${((6,1^3),(6,1^3))}$, depicted as follows.
\[
\begin{tikzpicture}[scale=0.8,every node/.style={scale=0.8}]
\fill [] (2,0) node {$\xmapsto{\tilde{f}_0^{2}}$};
\fill [] (6,0) node {$\xmapsto{\tilde{f}_3^{2}}$};
\fill [] (10,0) node {$\xmapsto{\tilde{f}_2^{2}}$};
\fill [] (14,0) node {$\xmapsto{\tilde{f}_1^{4}}$};
\fill [] (0,0) node {$\gyoung(0123,,0123)$};
\fill [] (4,0) node {$\gyoung(0123!\lc0,,!\wh0123!\lc0)$};
\fill [] (8,0) node {$\gyoung(01230,!\lc3,,!\wh01230,!\lc3)$};
\fill [] (12,0) node {$\gyoung(01230,3,!\lc2,,!\wh01230,3,!\lc2)$};
\fill [] (16,0) node {$\gyoung(01230!\lc1,!\wh3,2,!\lc1,,!\wh01230!\lc1,!\wh3,2,!\lc1)$};
\end{tikzpicture}
\]
Now \cref{thm:generalised k>j ss} tells us that 
%\[
$\spe{((6,1^3),(6,1^3))} \cong \D{((6,3,1^2),(4,1^3))}\langle 1 \rangle \oplus \D{((10,1^3),(2,1^3))}\langle 1 \rangle$,
%\]
so the first summand of $\spe{((6,1^3),(6,1^3))}$ is obtained by applying $\calf_{2,3}$ to $\mu$, depicted as follows.
\[
\begin{tikzpicture}[scale=0.8,every node/.style={scale=0.8}]
\fill [] (2,0) node {$\xmapsto{\tilde{f}_0^{2}}$};
\fill [] (6,0) node {$\xmapsto{\tilde{f}_3^{2}}$};
\fill [] (10,0) node {$\xmapsto{\tilde{f}_2^{2}}$};
\fill [] (14,0) node {$\xmapsto{\tilde{f}_1^{4}}$};
\fill [] (0,0) node {$\gyoung(0123,3,,012)$};
\fill [] (4,0) node {$\gyoung(0123!\lc0,!\wh3!\lc0,,!\wh012)$};
\fill [] (8,0) node {$\gyoung(01230,30,,012!\lc3,3)$};
\fill [] (12,0) node {$\gyoung(01230,30,!\lc2,,!\wh0123,3,!\lc2)$};
\fill [] (16,0) node {$\gyoung(01230!\lc1,!\wh30!\lc1,!\wh2,!\lc1,,!\wh0123,3,2,!\lc1)$};
\end{tikzpicture}
\]
\end{eg}

We next introduce and extend some notation from~\cite[Definition 3.1]{Sutton17II} in order to work with bihooks that are the transpose of those handled in \cref{thm:generalised k>j ss}.

\begin{defn}
For $x\in\bbn$, we define the following weakly decreasing sequence of $e-1$ non-negative integers summing to $x$.
\[
\{x\} := \left\lfloor \frac{x+e-2}{e-1} \right\rfloor, \left\lfloor \frac{x+e-3}{e-1} \right\rfloor, \dots, \left\lfloor \frac{x}{e-1} \right\rfloor
\]
If $\la = (\la_1, \la_2, \dots, \la_r)$ is a partition, then
we define $\{\la\} := (\{\la_1\},\{\la_2\},\dots,\{\la_r\})$, and analogously if $\la = (\la^{(1)}, \la^{(2)}) \in \mptn 2 n$, then $\{\la\}:= (\{\la^{(1)}\}, \{\la^{(2)}\})$.
\end{defn}

\begin{eg}
Let $e=3$, $\la = ((5e),\varnothing) = ((15),\varnothing)$ and $\mu = ((3e,e+1),(e-1)) = ((9,4),(2))$.
Then $\{\la\} = ((8,7),\varnothing)$ and $\{\mu\} = ((5,4,2^2),(1^2))$.
\end{eg}

\begin{cor}\label{cor:generalised k>j ss transpose}
Let $\la = ((b+1, 1^{je+a-1}), (b+1, 1^{ke+a-1}))$, for some $k\geq j \geq 1$, with $0< a \leq e$ and $0 \leq b < e$ with $a+b \neq e$, or for $a=b=0$ and set $n:=|\la|$.
Suppose that $p$ does not divide any of the integers $k+j, k+j-1, \dots, k-j+2$, or else that $p = 2 = j$ and $k\equiv 1 \pmod 4$.
Then,
\[
\spe\la \cong %\begin{cases}
\bigoplus_{r=1}^{j} \D{-\calf_{a,b}\{\left( \left(n-re,(r-1)e+1\right),(e-1) \right)\}}\langle 2k+j \rangle \oplus \D{-\calf_{a,b}\{((n), \varnothing)\}} \langle 2k+j \rangle. %& \text{if } b = 0,\\
%\\
%\bigoplus_{r=1}^{j} \D{\calf_{b,a}\{\left( \left(n-re-a-2b,(r-1)e+1+a, 1^{b-1}\right),(e,1^b) \right)\}}\langle 2k+j \rangle \oplus \D{\calf_{b,a}\{((n-a-2b,1^b),(a,1^b))\}} \langle 2k+j \rangle & \text{if } b \geq 1,
%\end{cases}
\]
\end{cor}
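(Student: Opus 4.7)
The strategy is to reduce to \cref{thm:generalised k>j ss} via the sign automorphism of $\scrr_n^\La$ (well-defined here since $\kappa=(0,0)$). Observe that $\la = \mu'$, where $\mu := ((ke+a,1^b),(je+a,1^b))$ is precisely the bipartition covered by \cref{thm:generalised k>j ss}. The plan is to produce a graded isomorphism of the form
\[
\spe{\la} \;\cong\; \bigl((\spe{\mu})^\circledast\bigr)^{\operatorname{sgn}}\langle c\rangle
\]
for a specific shift $c$, and then combine it with \cref{thm:generalised k>j ss} to extract the decomposition of $\spe{\la}$.

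The isomorphism would be established by mirroring \cref{lem:switchcomps}: duality (via \cite[Theorem~7.25]{kmr}) sends $\spe{\mu}$ to the row Specht module $\rspe{\mu}$ up to a grading shift by the defect of $\mu$, and the sign automorphism then sends $\rspe{\mu}$ to $\spe{\mu'} = \spe{\la}$ up to a further grading shift. The second step is verified by matching presentations: applying $\operatorname{sgn}$ to the row Specht presentation of $\mu$ negates all residues, and the resulting relations yield precisely the column Specht presentation of $\mu'$.

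Granted this isomorphism, the decomposition of $\spe{\mu}$ from \cref{thm:generalised k>j ss} together with $(\D{\nu})^\circledast \cong \D{\nu}$ and $(\D{\nu})^{\operatorname{sgn}} \cong \D{m_{e,(0,0)}(\nu)}$ gives
\[
\spe{\la} \;\cong\; \bigoplus_{\nu}\D{m_{e,(0,0)}(\nu)}\langle c-j\rangle,
\]
where $\nu$ ranges over the simple labels of \cref{thm:generalised k>j ss}. The shift $\langle 2k+j\rangle$ then follows from $c-j = 2k+j$, which can be checked directly using the defect of $\mu$, or verified a posteriori via graded self-duality of the Specht summands in the spirit of \cref{prop:selfdualspecht}.

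It remains to match the simple labels. Since both $m_{e,(0,0)}$ and $\calf_{a,b}$ are defined via cogood-node residue sequences, with $-\calf_{a,b}$ being the analogue of $\calf_{a,b}$ after negating those residues, we have formally that $m_{e,(0,0)}\circ\calf_{a,b} = (-\calf_{a,b})\circ m_{e,(0,0)}$. This reduces the identification to verifying $m_{e,(0,0)}(\xi)=\{\xi\}$ for the two seed families $\xi = ((ke+je), \varnothing)$ and $\xi = ((ke+je-re, re-e+1), (e-1))$ for $1\le r\le j$, by tracing the cogood-node algorithm on these explicit hook-shaped bipartitions and comparing with the formula for $\{\cdot\}$. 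The main obstacle is this final combinatorial verification -- particularly in the second family, where the Mullineux algorithm is more involved -- as the sign/duality isomorphism above is fairly routine given the tools already in the paper.
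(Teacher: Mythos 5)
Your proposal is correct and takes essentially the same approach as the paper, which starts from the $a=b=0$ case of \cref{thm:k>jss}, applies the sign/duality twist to obtain $\spe{((1^{je}),(1^{ke}))}$, computes $m_{e,\kappa}(\xi) = \{\xi\}$ for the seed families, and then applies $-\calf_{a,b}$; you instead start from \cref{thm:generalised k>j ss} and apply the sign twist last, which the paper's remark immediately following the corollary explicitly identifies as an interchangeable ordering. The commutation $m_{e,(0,0)}\circ\calf_{a,b} = (-\calf_{a,b})\circ m_{e,(0,0)}$ you invoke is indeed valid, since $m_{e,\kappa}\circ\tilde{f}_i = \tilde{f}_{-i}\circ m_{e,\kappa}$ follows directly from the cogood-residue-sequence definition of the level-two Mullineux map (and $\tilde{f}_i$ preserves regularity), so the only genuine combinatorial work remaining is the verification $m_{e,(0,0)}(\xi) = \{\xi\}$, exactly as the paper computes.
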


\begin{proof}
We assume that $a=b=0$, and observe that by~\cite[Theorems 7.25 and 8.5]{kmr}, we have
\[
((\spe{((ke),(je))})^{\sgn})^\circledast \langle 2k{+}2j\rangle \cong \rspe{((1^{je}),(1^{ke}))}{^\circledast} \langle 2k{+}2j \rangle \cong \spe{((1^{je}),(1^{ke}))},
\]
and so
\[
\spe{((1^{je}),(1^{ke}))} \cong \bigoplus_{\nu} \D{m_{e,\kappa}(\nu)} \langle 2k{+}j \rangle,
\]
where the sum is over all $\nu$ appearing in \cref{thm:k>jss}.

Thus we must compute the images of the bipartitions $\nu$ under the Mullineux map.
First, it is simple to check that
\[
m_{e,\kappa} (((n),\varnothing)) = (\tilde{f}_1 \tilde{f}_2 \dots \tilde{f}_{-1} \tilde{f}_0)^{k+j} \varnothing
= \{((n),\varnothing)\}.
\]
Likewise, it's not so difficult to see that
\[
m_{e,\kappa} ( \left(((k+j-r)e,(r-1)e+1),(e-1)\right) ) = (\tilde{f}_1^2  \tilde{f}_2^2\dots \tilde{f}_{-1}^2 \tilde{f}_0^2)^{r} (\tilde{f}_1 \tilde{f}_2 \dots \tilde{f}_{-1} \tilde{f}_0)^{k+j-2r}\varnothing.
\]
Note that
\[
\tilde{f}_2^2 \dots \tilde{f}_{-1}^2 \tilde{f}_0^2 (\tilde{f}_1 \tilde{f}_2 \dots \tilde{f}_{-1} \tilde{f}_0)^{k+j-2r}\varnothing = \{(((k+j-2r+1)e-1),(1^{e-1}))\}
\]
and that applying $(\tilde{f}_1^2 \tilde{f}_2^2 \dots \tilde{f}_{-1}^2\tilde{f}_0^2 )^{r-1} \tilde{f}_1^2$ to this adds two conormal nodes, one in one of the first $e{-}1$ rows of the first component and the other in one of the next $e{-}1$ rows of the first component.

Now we note that $-\calf_{a,b}(((1^{je}),(1^{ke})))=\la$, and that $-\calf_{a,b}$ is well-behaved on all bipartitions we are applying it to in the statement of the corollary; by this, we mean that at each step we add the maximal number of conormal nodes, so that the corresponding sequence of $i$-induction functors sends a simple module $\D\nu$ to a simple module $\D{-\calf_{a,b}(\nu)}$.
The result follows.
\end{proof}

\begin{rem}
In the corollary above, and its proof, we essentially applied the twist by sign and then applied $-\calf_{a,b}$.
We could have done these the other way round, taking our bipartitions in \cref{thm:k>jss} and applying $\calf_{a,b}$ first, followed by the sign-twist.
\end{rem}

\subsection{Some non-semisimple cases: small $j$}\label{subsec:nonss}

Here, we further examine some of the non-semisimple cases, once again starting with the Specht modules $\spe{((ke),(je))}$.
Once the structures of these modules are determined, we can argue as in \cref{thm:generalised k>j ss,cor:generalised k>j ss transpose} to determine the structure of the other decomposable Specht modules that are obtained by applying divided power functors, and then a sign-twist.
We already completely determined the structure of $\spe{((ke),(je))}$ when $j=1$ in \cref{prop:j=1}, and for $j>1$ \cref{thm:k>jss} tells us exactly when $\spe{((ke),(je))}$ is not semisimple.
Of course, we have more bihooks corresponding to decomposable Specht modules for this `$j=1$ situation'.
We can apply the argument from \cref{thm:generalised k>j ss} to the $j=1$ case in \cref{prop:j=1} to yield the following.

\begin{cor}\label{cor:j=1induced}
Suppose $k\geq 1$, and let $\la = ((ke + a, 1^b), (e + a, 1^b))$, for some $k \geq 1$, with $0 < a \leq e$ and $0 \leq b < e$ with $a + b \neq e$, or for $a = b = 0$.
Then if $p\nmid k+1$, $\spe{\la}$ is semisimple, and moreover
\[
\spe\la \cong \begin{cases}
\D{((ke+e+a), (a))} \langle 1 \rangle \oplus \D{\left( \left(ke+a,a+1\right),(e-1) \right)}\langle 1 \rangle & \text{if } b = 0,\\
\\
\D{((ke+e+a,1^b),(a,1^b))} \langle 1 \rangle \oplus \D{\left( \left(ke+a,a+1, 1^{b-1}\right),(e,1^b) \right)}\langle 1 \rangle & \text{if } b \geq 1.
\end{cases}
\]
If $p \mid k+1$, then $\spe{\la}$ is non-semisimple, and moreover
\[
\spe\la \cong \begin{cases}
\D{((ke+e+a), (a))} \langle 1 \rangle \hspace{2pt} |\hspace{1pt} \D{\left( \left(ke+a,a+1\right),(e-1) \right)}\langle 1 \rangle  \hspace{2pt} |\hspace{1pt} \D{((ke+e+a), (a))} \langle 1 \rangle& \text{if } b = 0,\\
\\
\D{((ke+e+a,1^b),(a,1^b))} \langle 1 \rangle \hspace{2pt} |\hspace{1pt} \D{\left( \left(ke+a,a+1, 1^{b-1}\right),(e,1^b) \right)}\langle 1 \rangle \hspace{2pt} |\hspace{1pt} \D{((ke+e+a,1^b),(a,1^b))} \langle 1 \rangle & \text{if } b \geq 1.
\end{cases}
\]
\end{cor}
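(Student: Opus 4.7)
The plan is to mimic the argument used to prove Theorem \ref{thm:generalised k>j ss}, but starting from Proposition \ref{prop:j=1} with $j=1$. The composite divided-power functor $\calf_{a,b}$ of Definition \ref{def:bipartitioninduction} is an exact graded functor on $\scrr_n^\La$-modules, and by the cyclotomic-branching arguments used there (together with the description of addable $i$-nodes of $((ke),(e))$ at each intermediate stage), we have $\calf_{a,b}(\spe{((ke),(e))}) \cong \spe\la$.

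Next I would check, exactly as in Theorem \ref{thm:generalised k>j ss}, that at every intermediate step each simple composition factor $\D\mu$ of the current Specht module has exactly the right number of conormal $i$-nodes and no removable $i$-nodes. By Lemma \ref{lem:grbranch}, this forces $f_i^{(r)}\D\mu \cong \D{\tilde f_i^r \mu}$, so $\calf_{a,b}(\D\mu) \cong \D{\calf_{a,b}(\mu)}$ for each simple factor appearing in Proposition \ref{prop:j=1}. In the case $p\nmid k+1$, applying the exact functor $\calf_{a,b}$ to the direct sum in Proposition \ref{prop:j=1} then immediately gives the claimed semisimple decomposition of $\spe\la$.

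In the case $p\mid k+1$, applying $\calf_{a,b}$ to the two short exact sequences realising the uniserial filtration of $\spe{((ke),(e))}$ yields analogous short exact sequences for $\spe\la$, whose subquotients are the three claimed simple modules with the stated grading shifts (since each divided power functor is graded and the shifts in Proposition \ref{prop:j=1} are all $\langle 1\rangle$). This produces a 3-step filtration of $\spe\la$ with the correct composition factors in the correct order.

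The main obstacle is to upgrade this filtration to genuine uniseriality: one must rule out configurations such as $\mathrm{rad}\,\spe\la$ being a direct sum of its two composition factors, or a full decomposition with the middle factor splitting off. I would argue this as follows. The Specht module $\spe\la$ is cyclic (generated by $z_\la$), hence has simple head; by examining the cyclic generator and its image under $\calf_{a,b}$ from the head of $\spe{((ke),(e))}$, this head must be $\D{((ke+e+a,1^b),(a,1^b))}\langle 1\rangle$. Dually, one needs to see that the socle is the same simple module. Since Proposition \ref{prop:selfdualspecht} only covers $\spe{((ke),(je))}$, the cleanest replacement is to exploit that $\calf_{a,b}$ is an equivalence of categories between appropriate weight-space subcategories (Chuang--Rouquier style $\mathfrak{sl}_2$-categorification), so it preserves indecomposability; the uniserial indecomposable $\spe{((ke),(e))}$ is then sent to an indecomposable module whose only possible Loewy structure with these three composition factors and simple head is the claimed uniserial one. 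Alternatively, a direct Ext-vanishing computation using the two-factor structure of the radical forces the same conclusion.
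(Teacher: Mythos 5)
Your outline starts correctly: $\calf_{a,b}$ is exact and graded, $\calf_{a,b}(\spe{((ke),(e))}) \cong \spe\la$, and the semisimple case $p\nmid k+1$ is a special case of \cref{thm:generalised k>j ss}. You also correctly identify that the entire difficulty is in showing the three-step filtration of $\spe\la$ in the case $p\mid k+1$ is genuinely uniserial. However, your argument for this has a fatal flaw.

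The claim ``the Specht module $\spe\la$ is cyclic (generated by $z_\la$), hence has simple head'' is false. Cyclic modules do not in general have simple heads, and in fact this paper is precisely about cyclic Specht modules that are decomposable (hence have non-simple heads); see \cref{thm:decompbihooks}. So this step collapses, and with it the plan to pin down the head of $\spe\la$ a priori. Even granting that one could show simple head by some other means, the inference ``indecomposable with three composition factors and simple head $\Rightarrow$ uniserial'' is also unsound: an indecomposable module with head $A$ and radical isomorphic to $A\oplus B$ (socle $A\oplus B$) is a counterexample, and ruling it out requires knowing the socle is simple too, which is exactly what you acknowledge you do not have. Your fallback to a Chuang--Rouquier categorical equivalence of weight subcategories is plausible in spirit but is not spelled out, and it is more machinery than is needed.

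The paper's actual route is more elementary and avoids both problems. It applies $\calf$ to \emph{both} short exact sequences coming from the uniserial filtration of $\spe{((ke),(e))}$ (the one with submodule $A$ and the one with quotient $A$), obtaining that $\calf(M)$ has a submodule isomorphic to $\calf(A|B)$ and a quotient isomorphic to $\calf(B|A)$. It then observes that $\calf$ must preserve the non-splitness of these two-step extensions: if, say, $\calf(A|B)$ were split, then applying the appropriate composition of the (exact) restriction divided power functors $e_i^{(r)}$ would split the corresponding extension inside $\spe{((ke),(e))}$, contradicting that this module is uniserial (equivalently, indecomposable). Once one knows $\calf(M)$ has a uniserial submodule $\calf(A)|\calf(B)$ and a uniserial quotient $\calf(B)|\calf(A)$ with these three composition factors, uniseriality of $\calf(M)$ follows by a direct check: the only Loewy structure admitting both is $\calf(A)|\calf(B)|\calf(A)$. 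You would do well to replace your head/socle argument with this submodule--quotient argument.
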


\begin{proof}
If $p\nmid k+1$, this is just a special case of \cref{thm:generalised k>j ss}.
If $p \mid k+1$, then we apply divided power functors as in the proof of \cref{thm:generalised k>j ss}, noting that they are exact functors that send simple modules to simple modules.

Thus in the non-semisimple cases we may apply the corresponding (composition of) functors to the module $\spe{((ke),(je))} \cong \D{((n),\varnothing)}\langle 1\rangle \hspace{2pt} |\hspace{1pt} \D{\left( \left(ke,1\right),(e-1) \right)}\langle 1\rangle \hspace{2pt} |\hspace{1pt} \D{((n),\varnothing)}\langle 1\rangle$ to obtain the result, using the following simple observation.

Note that if we have a module $M\cong A\hspace{2pt} |\hspace{1pt}B\hspace{2pt} |\hspace{1pt}A$, for simple modules $A$ and $B$, and an exact functor $\calf$ such that $\calf(A)$ and $\calf(B)$ are simple modules, then we may apply $\calf$ to the short exact sequences
\[
0 \rightarrow A \rightarrow M \rightarrow B\hspace{2pt} |\hspace{1pt}A \rightarrow 0 \quad \text{ and } \quad 0 \rightarrow A\hspace{2pt} |\hspace{1pt}B \rightarrow M \rightarrow A \rightarrow 0
\]
to yield 
\[
0 \rightarrow \calf(A) \rightarrow \calf(M) \rightarrow \calf(B\hspace{2pt} |\hspace{1pt}A) \rightarrow 0 \quad \text{ and } \quad 0 \rightarrow \calf(A\hspace{2pt} |\hspace{1pt}B) \rightarrow \calf(M) \rightarrow \calf(A) \rightarrow 0.
\]
If we additionally assume that $\calf$ preserves non-split extensions for all modules we are applying it to,
it is easy to see that $\calf(A\hspace{2pt} |\hspace{1pt}B) \cong \calf(A)\hspace{2pt} |\hspace{1pt}\calf(B)$ and $\calf(B\hspace{2pt} |\hspace{1pt}A) \cong \calf(B)\hspace{2pt} |\hspace{1pt}\calf(A)$.
It follows that $\calf(M)$ must have submodules isomorphic to $\calf(A)$ and $\calf(A)\hspace{2pt} |\hspace{1pt}\calf(B)$ and quotients isomorphic to $\calf(A)$ and $\calf(B)\hspace{2pt} |\hspace{1pt}\calf(A)$, and thus that $\calf(M) \cong \calf(A) \hspace{2pt} |\hspace{1pt} \calf(B) \hspace{2pt} |\hspace{1pt} \calf(A)$.
The result now follows by applying this (with $\calf$ being the appropriate composition of divided power functors, as in the proof of \cref{thm:generalised k>j ss}) to
\[
\spe{((ke),(je))} \cong \D{((n),\varnothing)}\langle 1\rangle \hspace{2pt} |\hspace{1pt} \D{\left( \left(ke,1\right),(e-1) \right)}\langle 1\rangle \hspace{2pt} |\hspace{1pt} \D{((n),\varnothing)}\langle 1\rangle,
\]
noting that
\[
\calf(\D{((n),\varnothing)}\langle 1\rangle) \cong
\begin{cases}
\D{((ke+e+a), (a))} \langle 1 \rangle & \text{ if } b=0,\\
\D{((ke+e+a,1^b),(a,1^b))} \langle 1 \rangle & \text{ if } b\geq1;
\end{cases}
\]
and 
\[
\calf(\D{\left( \left(ke,1\right),(e-1) \right)}\langle 1\rangle) \cong
\begin{cases}
\D{\left( \left(ke+a,a+1\right),(e-1) \right)}\langle 1 \rangle & \text{ if } b=0,\\
\D{\left( \left(ke+a,a+1, 1^{b-1}\right),(e,1^b) \right)}\langle 1 \rangle & \text{ if }b\geq 1;
\end{cases}
\]
and that the non-split extensions are by assumption preserved by our divided power functors.
If they were not preserved, we would have a split extension that we could then apply an appropriate composition of (exact) divided power functors $e_i^{(r)}$, that would yield $\spe{((ke),(je))}$ as a decomposable module, which is a contradiction.
The result now follows.
\end{proof}

As in \cref{cor:generalised k>j ss transpose} and the remark thereafter, we may twist our Specht modules by sign to obtain the result for the conjugate bipartitions.

\begin{cor}\label{cor:j=1induced transpose}
Suppose $k\geq 1$, and let $\la = ((b+1, 1^{e+a-1}), (b+1, 1^{ke+a-1}))$, for some $k \geq 1$, with $0 < a \leq e$ and $0 \leq b < e$ with $a + b \neq e$, or for $a = b = 0$.
Then $\spe\la$ is semisimple if and only if $p\nmid k+1$, and the structure of $\spe\la$ is given by applying the Mullineux map $m_{e,\kappa}$ to all of the bipartitions appearing in \cref{cor:generalised k>j ss transpose}.
\end{cor}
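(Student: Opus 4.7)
The plan is to mirror the strategy used in the proof of \cref{cor:generalised k>j ss transpose}, but with \cref{cor:j=1induced} as the input in place of \cref{thm:generalised k>j ss}. First I would set $\mu = ((ke+a,1^b),(e+a,1^b))$ and note that the conjugate bipartition satisfies $\mu' = \la$. Using \cite[Theorems 7.25 and 8.5]{kmr} together with the relationship between column and row Specht modules and the sign isomorphism, I can produce a graded isomorphism
\[
\spe\la \cong ((\spe\mu)^{\operatorname{sgn}})^\circledast \langle N \rangle
\]
for an explicit integer $N$ obtained by comparing the codegrees of the column- and row-initial tableaux of $\mu$ (this is exactly the kind of degree calculation that appears in \cref{lem:switchcomps} and in the opening of the proof of \cref{cor:generalised k>j ss transpose}).

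Next I would record how the two autoequivalences interact with the simple modules and their grading shifts. Both $(-)^{\operatorname{sgn}}$ and $(-)^\circledast$ are exact. The sign twist satisfies $(\D\nu)^{\operatorname{sgn}} \cong \D{m_{e,\kappa}(\nu)}$ without any internal grading shift, while graded duality sends each self-dual simple $\D\nu$ to itself and converts $M\langle r\rangle$ into $M^\circledast \langle -r\rangle$. In particular, the composite $((-)^{\operatorname{sgn}})^\circledast$ sends a layer $\D\nu\langle r\rangle$ to $\D{m_{e,\kappa}(\nu)}\langle -r\rangle$ and reverses the socle-to-head ordering of composition factors in a uniserial module.

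Finally I would apply these operations to the two cases in \cref{cor:j=1induced}. In the semisimple case $p\nmid k+1$, $\spe\mu$ is a direct sum of two simple modules, so $\spe\la$ is too, and the labels are precisely the Mullineux images of those in \cref{cor:j=1induced}; a quick comparison shows these coincide with the $j=1$ instance of \cref{cor:generalised k>j ss transpose}. In the non-semisimple case $p\mid k+1$, $\spe\mu$ is uniserial with palindromic composition series $A\hspace{2pt}|\hspace{1pt}B\hspace{2pt}|\hspace{1pt}A$, so $\spe\la$ inherits a uniserial structure with composition series $m_{e,\kappa}(A)\hspace{2pt}|\hspace{1pt}m_{e,\kappa}(B)\hspace{2pt}|\hspace{1pt}m_{e,\kappa}(A)$, with grading shifts determined by combining $N$ with the negation coming from dualisation. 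The main obstacle is purely bookkeeping: one must verify that the net effect of $N$ together with the sign flip of each summand's internal shift reproduces the shifts predicted by applying Mullineux to \cref{cor:generalised k>j ss transpose}. Since each step is exact and simple-to-simple, this reduces to the same style of degree computation that underlies \cref{lem:switchcomps}, and I anticipate no genuinely new difficulty.
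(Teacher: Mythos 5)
Your proposal matches the paper's intent: the paper gives no freestanding proof but simply remarks that, as in Corollary~\ref{cor:generalised k>j ss transpose} and the remark following it, one twists by sign (and dualises) to pass from the Specht modules of Corollary~\ref{cor:j=1induced} to those labelled by the conjugate bihooks. Your chain $\spe\la \cong ((\spe\mu)^{\operatorname{sgn}})^\circledast \langle N\rangle$, the observation that $(-)^{\operatorname{sgn}}$ acts on simple labels via the Mullineux map, and the bookkeeping of grading shifts under $(-)^\circledast$ reproduce exactly the argument being pointed to, so the proof is correct and takes essentially the same route.
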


\begin{rem}
When $j=1$, observe that the set of all decomposable Specht modules indexed by bihooks coincides with those that are semisimple.
\end{rem}

The next natural case to handle is $j=2$, since this will leave us with a uniform condition for the non-semisimplicity in the remaining cases.
The following lemma will enable us to apply results for the Schur algebra to our Specht modules when they are not semisimple.

\begin{lem}\label{lem:charchange}
Over an arbitrary field \(\bbf\), we have \(\sL\mu \cong \scrm(\D{\scrt(\mu)}) \) for all two-column partitions \(\mu \in \scrp_n\).
\end{lem}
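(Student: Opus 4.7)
The plan is to proceed by induction on $m$, where $\mu = (2^m, 1^{n-2m})$ is a two-column partition of $n$. The base case $m = 0$ is immediate: since $\spe{(ne)}$ is one-dimensional, hence simple, it coincides with its simple head $\D{((ne), \varnothing)} = \D{\scrt(1^n)}$, and Lemma \ref{Ske} gives $\D{\scrt(\mu)} \cong \widehat{\scrm}(\sL{1^n})$, whence $\scrm(\D{\scrt(\mu)}) \cong \sL\mu$.

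For the inductive step with $m \geq 1$, I would set $k = n-m$ and $j = m$, so $k \geq j \geq 1$ and $k+j = n$, and study the $\mathscr{S}_n$-module $\spe{((ke), (je))}$, whose $\scrm$-image (ungraded) is $\Delta(1^k) \otimes \Delta(1^j)$ by Corollary \ref{lem:moritaequiv}. By Lemma \ref{lem:Pierifilt}, this tensor product admits a Weyl filtration with subquotients $\Delta(\mu_s)$, where $\mu_s := (2^s, 1^{n-2s})$ for $0 \leq s \leq m$. The elementary observation that $\nu \dom \mu_s$ forces $\nu_1 \leq 2$ implies that every composition factor of each $\Delta(\mu_s)$ is indexed by a two-column partition, so all composition factors of $\Delta(1^k) \otimes \Delta(1^j)$ lie in $\{\sL{\mu_s} \mid 0 \leq s \leq m\}$; in particular $\sL\mu = \sL{\mu_m}$ occurs as a composition factor (as the simple head of the top quotient $\Delta(\mu)$). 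On the KLR side, Proposition \ref{prop:gammahom} provides a nonzero homomorphism $\gamma_{k,j}: \spe{\scrt(\mu)} \to \spe{((ke), (je))}$; since $\scrt(\mu)$ is regular, $\spe{\scrt(\mu)}$ has simple head $\D{\scrt(\mu)}$, and so does the nonzero image of $\gamma_{k,j}$. This exhibits $\D{\scrt(\mu)}$ as a composition factor of $\spe{((ke),(je))}$.

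Combining these facts, we obtain $\scrm(\D{\scrt(\mu)}) \cong \sL{\mu_s}$ for some $s \in \{0, \ldots, m\}$. The only remaining step will be to rule out $s < m$: if $s < m$, then the inductive hypothesis gives $\sL{\mu_s} \cong \scrm(\D{\scrt(\mu_s)}) \cong \scrm(\D{\scrt(\mu)})$, forcing $\D{\scrt(\mu)} \cong \D{\scrt(\mu_s)}$ since $\scrm$ is an equivalence, and the visible injectivity of $\scrt$ on two-column partitions then yields $\mu = \mu_s$ --- a contradiction. Hence $s = m$, and $\scrm(\D{\scrt(\mu)}) \cong \sL\mu$, as required. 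The main technical point underpinning the whole argument is the dominance observation confining composition factors of two-column Weyl modules to two-column simples; without it, $\scrm(\D{\scrt(\mu)})$ could a priori be indexed by a partition outside the scope of the inductive hypothesis, and the induction would fail to close.
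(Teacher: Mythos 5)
Your proposal is correct, and it takes a genuinely different route from the paper's. The paper's proof works by reduction modulo $p$: it takes a $\bbz$-form of the complex simple $\D{\scrt(\mu),\bbc}$, observes via Brundan--Kleshchev's reduction theorem \cite[Theorem 5.17]{bk09} that the decomposition matrix of $\operatorname{J}_{\scrt(\mu)}$ is unitriangular with respect to dominance, invokes \cite[Lemma 7.1]{km17a} to know that the composition factors of $\operatorname{J}_{\scrt(\mu)}$ are of the form $\widehat\scrm(\sL\gamma_\bbf)$ with matching unitriangularity, and then closes an induction over dominance. In particular it requires $\D{\scrt(\mu),\bbc}\cong\widehat\scrm(\sL\mu_\bbc)$ as input from the characteristic-zero semisimplicity analysis. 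Your argument instead works directly over $\bbf$: you produce $\D{\scrt(\mu)}$ as a composition factor of $\spe{((ke),(je))}$ by exhibiting the nonzero homomorphism $\gamma_{k,j}$ of \cref{prop:gammahom} (whose image, being a nonzero quotient of $\spe{\scrt(\mu)}$, surjects onto the head $\D{\scrt(\mu)}$), then combine \cref{lem:moritaequiv} and the Pieri filtration of \cref{lem:Pierifilt} with the observation that composition factors of two-column Weyl modules are two-column to confine $\scrm(\D{\scrt(\mu)})$ to $\{\sL{\mu_s}\}_{s\le m}$, and finally use the inductive hypothesis and injectivity of $\scrt$ to pin down $s=m$. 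What your approach buys is self-containment within the present paper's toolkit (no appeal to \cite[Lemma 7.1]{km17a} or to a characteristic-zero base case), at the cost of relying on the rather lengthy explicit homomorphism computation in \cref{sec:homs}; the paper's proof avoids those computations entirely. One small correction: the confinement step should read that $\mu_s\dom\nu$ (not $\nu\dom\mu_s$) forces $\nu_1\le 2$, since composition factors of $\Delta(\mu_s)$ are indexed by partitions dominated by $\mu_s$; your intended conclusion is the right one, but the dominance inequality is written in the wrong direction.
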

\begin{proof}
Consider the simple \(\scrr_{n\delta, \bbc}\)-module \(\D{\scrt(\mu),\bbc}\).
Following \cite[\S5.6]{bk09}, we select a \(\bbz\)-form \(\operatorname{M} \subset \D{\la, \bbc}\) such that \(\operatorname{M} \otimes_{\bbz} \bbc = \D{\scrt(\mu), \bbc}\), and define the \(\scrr_{n\delta, \bbf}\)-module \(\operatorname{J}_{\scrt(\mu)} = \operatorname{M} \otimes_{\bbz} \bbf\).
By \cite[Theorem 5.17]{bk09}, we have
\begin{align}\label{sd2}
[\operatorname{J}_{\scrt(\mu)}: \D{\nu, \bbf}] = 
\begin{cases}
1 & \textup{if }\nu = \scrt(\mu);\\
0 & \textup{if }\nu \ndomby \scrt(\mu).
\end{cases}
\end{align}
As \(\D{\scrt(\mu),\bbc} \cong \widehat{\scrm}(\sL\mu_\bbc)\), it follows from \cite[Lemma 7.1]{km17a} that all simple factors of \(J_{\scrt(\mu)}\) are of the form \(\widehat{\scrm}(\operatorname{L}(\gamma)_\bbf)\), where
\begin{align}\label{sd1}
[\operatorname{J}_{\scrt(\mu)} : \widehat{\scrm}(\sL{\gamma}_\bbf)]  = 
\begin{cases}
1 & \textup{if }\gamma = \mu;\\
0 & \textup{if }\gamma \ndomby \mu.
\end{cases}
\end{align}

Now, going by induction on dominance order on two-column partitions, we show that  \(\widehat{\scrm}(\sL{\mu}_\bbf) \cong \D{\scrt(\mu),\bbf}\).
Make the induction assumption on \(\mu\).
By (\ref{sd1}), the simple factors of \(\operatorname{J}_{\scrt(\mu)}\) are \(\widehat{\scrm}(\sL{\mu}_\bbf)\) and modules of the form \(\widehat{\mathscr{M}}(\sL{\gamma}_\bbf) \cong \D{\mathscr{T}(\gamma),\bbf}\) for \(\gamma \domsby \mu\).
But we have \(\D{\scrt(\mu),\bbf} \not \cong \D{\scrt(\gamma),\bbf}\) when \(\gamma \domsby \mu\), so it follows from (\ref{sd2}) that  \(\widehat{\scrm}(\sL{\mu}_\bbf) \cong \D{\scrt(\mu),\bbf}\), as required.
\end{proof}

\begin{rem}
If we combine \cref{thm:decompnos,lem:moritaequiv,lem:charchange,lem:Pierifilt},
%and the fact that $\Delta(1^k) \otimes \Delta(1^j)$ has a filtration by the Weyl modules $\Delta{(1^{k+j})}, \Delta{(2,1^{k+j-2})}, \dots, \Delta{(2^j,1^{k-j})}$
we now have a formula for readily computing all composition factors of $\spe{((ke),(je))}$, and their multiplicities. We remark that by \cref{indprodS} and \cref{KMWords1}, all simple factors of $\spe{((ke),(je))}$ are relatively unshifted with respect to each other; i.e., if one simple is shifted by $j$, all shifts must be by $j$.
\end{rem}

\begin{prop}\label{prop:j=2}
Suppose $k \geq 2$, and let $n=ke+2e$.
Then
\begin{enumerate}[label=(\roman*)]
\item if $p\neq 2$ and $p$ does not divide any of the integers $k+2$, $k+1$, $k$, or if $p=2$ and $k \equiv 1 \pmod 4$, then
\[
\spe{((ke),(2e))} \cong \D{\left( \left(ke,e+1\right),(e-1) \right)}\langle 2\rangle \oplus\D{\left( \left(ke+e,1\right),(e-1) \right)}\langle 2\rangle \oplus \D{((n),\varnothing)}\langle 2\rangle;
\]

\item if $2\neq p \mid k+2$, then
\[
\spe{((ke),(2e))} \cong \D{\left( \left(ke,e+1\right),(e-1) \right)}\langle 2\rangle
\oplus
\left(\D{((n),\varnothing)}\langle 2\rangle \hspace{2pt} |\hspace{1pt} \D{\left( \left(ke+e,1\right),(e-1) \right)}\langle 2\rangle \hspace{2pt} |\hspace{1pt} \D{((n),\varnothing)}\langle 2\rangle \right);
\]

\item if $2\neq p \mid k+1$, or if $p=2$ and $k \equiv 3\pmod 4$ then
\[
\spe{((ke),(2e))} \cong \D{\left( \left(ke+e,1\right),(e-1) \right)}\langle 2\rangle
\oplus
\left(\D{((n),\varnothing)}\langle 2\rangle \hspace{2pt} |\hspace{1pt} \D{\left( \left(ke,e+1\right),(e-1) \right)}\langle 2\rangle \hspace{2pt} |\hspace{1pt} \D{((n),\varnothing)}\langle 2\rangle \right);
\]

\item if $2\neq p \mid k$, then
\[
\spe{((ke),(2e))} \cong \D{((n),\varnothing)}\langle 2\rangle
\oplus
\left( \D{\left( \left(ke+e,1\right),(e-1) \right)}\langle 2\rangle \hspace{2pt} |\hspace{1pt} \D{\left( \left(ke,e+1\right),(e-1) \right)}\langle 2\rangle \hspace{2pt} |\hspace{1pt}  \D{\left( \left(ke+e,1\right),(e-1) \right)}\langle 2\rangle \right);
\]

\item if $p=2$ and $k \equiv 0 \pmod 4$, then
\begin{multline*}
\spe{((ke),(2e))} \cong \D{((n),\varnothing)}\langle 2\rangle
\oplus\\
\left(\D{\left( \left(ke+e,1\right),(e-1) \right)}\langle 2\rangle \hspace{2pt} |\hspace{1pt}
\D{((n),\varnothing)}\langle 2\rangle \hspace{2pt} |\hspace{1pt} \D{\left( \left(ke,e+1\right),(e-1) \right)}\langle 2\rangle
\hspace{2pt} |\hspace{1pt} \D{((n),\varnothing)}\langle 2\rangle \hspace{2pt} |\hspace{1pt} \D{\left( \left(ke+e,1\right),(e-1) \right)}\langle 2\rangle\right);
\end{multline*}

\item if $p=2$ and $k \equiv 2 \pmod 4$, then $\spe{((ke),(2e))}$ is indecomposable, and
\[
\begin{tikzpicture}
\node at (-3,1) {$\spe{((ke),(2e))} \langle -2 \rangle \cong$};

\node (triv1) at (0,0) {$\D{((n),\varnothing)}$};

\node (l11) at (0,2) {$\D{((ke+e,1),(e-1))}$};

\node (l2) at (3.5,1) {$\D{((ke,e+1),(e-1))}$};

\node (l12) at (7,0) {$\D{((ke+e,1),(e-1))}$};

\node (triv2) at (7,2) {$\D{((n),\varnothing)}$};

\draw (triv1.north) -- (l11.south);

\draw (l11.south east) -- (l2.north west);

\draw (l2.south east) -- (l12.north west);

\draw (l12.north) -- (triv2.south);
\end{tikzpicture}
\]
\end{enumerate}
\end{prop}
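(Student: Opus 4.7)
The plan is to use the Morita equivalence of \cref{lem:moritaequiv} to transport $\spe{((ke),(2e))}$ (up to a grading shift of $-2$) to the Schur algebra module $\Delta^{k+2}(1^k) \otimes \Delta^{k+2}(1^2)$, which by \cref{lem:Pierifilt} has a Weyl filtration with factors $\Delta(1^{k+2})$, $\Delta(2,1^k)$ and $\Delta(2^2,1^{k-2})$. In each of the six cases I will: (a) read off the composition factors of each Weyl module from \cref{thm:decompnos}; (b) decide which Weyl modules are simple versus length-two uniserial using \cref{cor:irredweyls}; and (c) count the number of indecomposable summands using \cref{lem:no.indecompsummands} together with Henke's formula \cref{thm:Henkeformula}. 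This yields three indecomposable summands in cases (i)--(iv), two in case (v), and a single summand in case (vi).

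Case (i) is a direct specialisation of \cref{thm:k>jss}. For each of cases (ii), (iii), and (iv), the three summands will turn out to consist of two simple modules and one self-dual uniserial module of length three; which Weyl module in the Pieri filtration is non-simple then dictates which pair of simples gets glued into a length-three piece and which simple ends up in its middle. Self-duality from \cref{prop:selfdualspecht}, together with the explicit submodule inclusions provided by $\alpha_{k,2}$ and $\gamma_{k,2}$ from \cref{prop:alphahom,prop:gammahom}, will then force the stated arrangement.

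Cases (v) and (vi) are the most intricate, since both yield modules with the same five composition factors and the same multiplicities, but with genuinely different extension structures. In case (v) we obtain two summands: the simple $\D{((n),\varnothing)}\langle 2\rangle$ together with a self-dual uniserial module of length five, whose palindromic composition series is forced by self-duality. In case (vi) the entire Specht module is a single indecomposable module with a non-uniserial Alperin diagram in which both head and socle split as a sum of two distinct simples. The hardest part of the argument will be to correctly distinguish these two extension patterns and verify the diagonal extensions of the Alperin diagram in case (vi); I shall do this by combining the self-duality constraint from \cref{prop:selfdualspecht} with the explicit submodules coming from $\alpha_{k,2}$ and $\gamma_{k,2}$, and by pulling back the precise tilting module structure of $\Delta(1^k) \otimes \Delta(1^2)$ on the Schur algebra side.
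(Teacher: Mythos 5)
Your overall strategy matches the paper's: transport $\spe{((ke),(2e))}$ across the Morita equivalence of \cref{lem:moritaequiv}, use the Pieri filtration of \cref{lem:Pierifilt}, read off composition factors of the Weyl pieces from \cref{thm:decompnos}, count indecomposable summands via Henke's formula and \cref{lem:no.indecompsummands}, then pin down the arrangement using self-duality (\cref{prop:selfdualspecht}) and the homomorphisms $\alpha_{k,2}$, $\gamma_{k,2}$. However, you have a concrete error in the summand count: in cases (ii), (iii), and (iv) there are \emph{two} indecomposable summands (one simple and one self-dual uniserial module of length three), not three. A quick check with \cref{thm:Henkeformula} in case (ii), say, shows $\sY{k,2}$ and $\sY{k+1,1}$ appear but $\sY{k+2}$ does not; and indeed the total number of composition factors is four ($\sL{1^{k+2}}$ twice, $\sL{2,1^k}$ once, $\sL{2^2,1^{k-2}}$ once), which cannot fit into ``two simples plus a length-three uniserial''. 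This slip would propagate to a wrong final structure for (ii)--(iv) if carried through literally.

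Two further points deserve attention. First, in case (vi) the paper must distinguish between two candidate self-dual Alperin diagrams on the same five composition factors, and the deciding step is \cite[Lemma~A3.1]{Donkin}: a module with a $\Delta$-filtration must have its ``top'' standard factor $\Delta(1^{k+2})=\sL{1^{k+2}}$ appearing as a quotient. Your phrase ``pulling back the precise tilting module structure'' gestures in this direction but does not isolate the actual argument; self-duality and the images of $\alpha_{k,2}$, $\gamma_{k,2}$ alone do not distinguish the two diagrams, so you should be explicit that the $\Delta$-filtration of $\Delta(1^k)\otimes\Delta(1^2)$ forces $\sL{1^{k+2}}$ into the head. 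Second, the grading shifts in cases (iii) and (iv) require a bit more than the mere existence of $\alpha_{k,2}$: the paper establishes injectivity of $\alpha_{k,2}$ in case (ii) by a leading-term argument, and in case (iv) uses the composite $\alpha_{k,2}\circ\alpha_{k+1,1}$ to produce a nonzero degree-2 map from $\spe{((n),\varnothing)}$, since $\alpha_{k,2}$ alone might a priori kill the trivial factor. A full write-up should address this.
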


\begin{proof}
Part (i) is completely handled in \cref{thm:k>jss}.
For part (ii), we apply \cref{thm:decompnos} to $\scrm(\spe{((ke),(2e))}) \cong \Delta(1^k) \otimes \Delta(1^2)$, which has a filtration by the Weyl modules $\Delta(1^{k+2}) = \sL{1^{k+2}}$, $\Delta(2,1^k)$, and $\Delta(2^2,1^{k-2})$.
We may check that $\Delta(2,1^k) \cong \sL{1^{k+2}} \hspace{2pt} |\hspace{1pt} \sL{2,1^k}$ and $\Delta(2^2,1^{k-2}) = \sL{2^2,1^{k-2}}$.
Given that $\spe{((ke),(2e))}$ is decomposable and that each summand is self-dual, it follows that
\[
\Delta(1^k) \otimes \Delta(1^2) \cong \sL{2^2,1^{k-2}} \oplus \left( \sL{1^{k+2}} \hspace{2pt} |\hspace{1pt} \sL{2,1^k} \hspace{2pt} |\hspace{1pt} \sL{1^{k+2}} \right).
\]
Since $\scrm(\D{((n),\varnothing)}) = \sL{1^{k+2}}$,
$\scrm(\D{( (ke+e,1),(e-1) )}) =  \sL{2,1^k}$,
and $\scrm(\D{( (ke,e+1),(e-1) )}) = \sL{2^2,1^{k-2}}$, by \cref{lem:charchange}, the ungraded result follows.

To see the grading shifts, we note that $\spe{((ke+e),(e))} \cong \D{((n),\varnothing)}\langle 1\rangle \hspace{2pt} |\hspace{1pt} \D{\left( \left(ke+e,1\right),(e-1) \right)}\langle 1\rangle \hspace{2pt} |\hspace{1pt} \D{((n),\varnothing)}\langle 1\rangle$ by \cref{thm:k>jss}.
It is easy to check that $\dim (\im\alpha_{k,2})>1$.
Indeed, $\psi_e$ kills the `leading' term $(k+1) z_{((ke),(2e))}$ of $\alpha_{k,2}(z_{((ke+e),(e))})$, while mapping all other terms in $\alpha_{k,2}(z_{((ke+e),(e))})$ to new basis vectors that are more dominant, with coefficients $k, k-1, \dots, 1$, at least some of which are nonzero modulo $p$.
It easily follows that $\alpha_{k,2}$ is injective: $\im \alpha_{k,2}$ must have simple head $\D{((n),\varnothing)}\langle 2\rangle$, and, up to grading shifts, it must be a submodule of $\D{((n),\varnothing)} \hspace{2pt} |\hspace{1pt} \D{\left( \left(ke+e,1\right),(e-1) \right)} \hspace{2pt} |\hspace{1pt} \D{((n),\varnothing)}$.
\cref{prop:alphahom,prop:gammahom} thus give us the required shifts for all simple factors.

Parts (iii) and (iv) are almost identical, and the details of the ungraded result are left to the reader.
For the grading, we now have that $\spe{((ke+e),(e))} \cong \D{((n),\varnothing)}\langle 1\rangle \oplus  \D{\left( \left(ke+e,1\right),(e-1) \right)}\langle 1\rangle$ in both cases, so that \cref{prop:alphahom} provides us the necessary grading shift for the non-trivial simple submodule (which is a summand in the case of part (iii), but not for part (iv)).
It is difficult to directly check that the homomorphism $\alpha_{k,2}$ is injective in this case, so a priori, it may kill the trivial factor.
Applying \cref{prop:gammahom} gives the grading shift for $\D{((ke,e+1),(e-1))}$.
In part (iii), this also gives us the grading shift on the simple submodule, and the grading shift for the remaining simple factor (the one-dimensional simple head of the non-simple summand) follows by \cref{prop:selfdualspecht}, as in the proof of \cref{prop:j=1}.
In part (iv), \cref{prop:gammahom} gives the grading shift for $\D{((ke,e+1),(e-1))}$ and the non-trivial simple module below it, from which we can once again deduce the grading shift on the simple head of the non-simple summand.
In order to obtain the grading shift on the trivial summand in this case, we note that $\alpha_{k,2}\circ \alpha_{k+1,1}: \spe{((n),\varnothing)} \rightarrow \spe{((ke),(2e))}$ is a non-zero degree 2 homomorphism -- this can be seen by noting that the leading term is $(k+2)(k+1) z_{((ke),(2e))}$, and that this cannot appear when reducing any of the products of $\Psi$ terms that arise when composing the two homomorphisms.

For part (v), one can check using \cref{thm:Henkeformula,lem:no.indecompsummands} that the Schur functor maps $\Delta(1^k) \otimes \Delta(1^2)$ to $\sM{k,2}$, which has two indecomposable summands when $p=2$ and $k\equiv 0 \pmod 4$.
Furthermore, by \cref{thm:decompnos}, in this case $\Delta(2,1^{k})$ has a copy of $\sL{1^{k+2}}$ as a submodule, and $\Delta(2^2,1^{k-2})$ has both $\sL{2,1^{k}}$ and $\sL{1^{k+2}}$ appearing as composition factors.
There is a unique way to combine these modules to make one with two summands, each of which is self-dual.
Namely,
\[
\Delta(1^k) \otimes \Delta(1^2) \cong \sL{1^{k+2}}
\oplus
\left(\sL{2,1^{k}} \hspace{2pt} |\hspace{1pt} \sL{1^{k+2}} \hspace{2pt} |\hspace{1pt} \sL{2^2,1^{k-2}} \hspace{2pt} |\hspace{1pt} \sL{1^{k+2}} \hspace{2pt} |\hspace{1pt} \sL{2,1^{k}}\right).
\]
Matching labels as in \cref{thm:k>jss} and applying \cref{lem:charchange} yields the ungraded version of the stated result.
For the grading shifts, \cref{prop:alphahom,prop:gammahom} give us the required information as in part (ii).

In part (vi), one can check using \cref{thm:Henkeformula} that $\sM{k,2}$ is indecomposable when $p=2$ and $k\equiv 2 \pmod 4$.
Furthermore, by \cref{thm:decompnos}, in this case $\Delta(2,1^{k})$ has a copy of $\sL{1^{k+2}}$ as a submodule, and $\Delta(2^2,1^{k-2})$ has a copy of $\sL{2,1^{k}}$ as a submodule.
With the fact that $\Delta(1^k) \otimes \Delta(1^2)$ has a filtration by the Weyl modules $\Delta(1^{k+2})$, $\Delta(2,1^{k})$, and $\Delta(2^2,1^{k-2})$, it follows that 
there are only two ways to construct this indecomposable self-dual module with these factors, namely
\[
\begin{tikzcd}
	& {\sL{2,1^k}} &&& {\sL{2,1^k}} && {\sL{1^{k+2}}} \\
	{\sL{1^{k+2}}} & {\sL{2^2,1^{k-2}}} & {\sL{1^{k+2}}} & {\text{or}} && {\sL{2^2,1^{k-2}}} \\
	& {\sL{2,1^k}} &&& {\sL{1^{k+2}}} && {\sL{2,1^k}}
	\arrow[no head, from=3-5, to=1-5]
	\arrow[no head, from=1-7, to=3-7]
	\arrow[no head, from=1-5, to=2-6]
	\arrow[no head, from=2-6, to=3-7]
	\arrow[no head, from=1-2, to=2-2]
	\arrow[no head, from=1-2, to=2-1]
	\arrow[no head, from=1-2, to=2-3]
	\arrow[no head, from=2-1, to=3-2]
	\arrow[no head, from=2-2, to=3-2]
	\arrow[no head, from=2-3, to=3-2]
\end{tikzcd}
\]
By \cite[Lemma~A3.1]{Donkin}, $\Delta(1^k) \otimes \Delta(1^2)$ must have $\Delta(1^{k+2}) = \sL{1^{k+2}}$ appearing as a quotient module, which rules out the first configuration above.
The ungraded result follows as before.
The gradings follow from \cref{prop:alphahom,prop:gammahom} as before.
\end{proof}

\begin{rem}
For the Specht modules in parts (v) and (vi) of the above proposition, the second and third authors did not determine in \cite{ss20} whether or not they were decomposable, so that even this coarser information is new.
\end{rem}

Applying the map $\calf_{a,b}$ to the labels of the above decomposable Specht modules, the observation in the proof of \cref{cor:j=1induced} immediately yields the following.

\begin{cor}\label{cor:j=2induced}
Suppose $k \geq 2$, and let $n=ke+2e$.
For $\la = ((ke + a, 1^b), (2e + a, 1^b))$, for some $k \geq 2$, with $0 < a \leq e$ and $0 \leq b < e$ with $a + b \neq e$, or for $a = b = 0$, the structure of $\spe\la$ is obtained from \cref{prop:j=2} by replacing each simple module $\D\mu\langle 2 \rangle$ therein with $\D{\calf_{a,b}(\mu)}\langle 2 \rangle$, and the structure of $\spe{\la'}$ is obtained from \cref{prop:j=2} by replacing each simple module $\D\mu \langle 2 \rangle$ therein with $\D{-\calf_{a,b}(\{\mu\})}\langle 2 \rangle = \D{m_{e,\kappa}\circ\calf_{a,b}(\mu)}\langle 2 \rangle$.
\end{cor}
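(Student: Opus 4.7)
The plan is to mirror, almost verbatim, the strategy that proved \cref{cor:j=1induced} and \cref{cor:generalised k>j ss transpose}, now starting from the six-case structural decomposition of $\spe{((ke),(2e))}$ supplied by \cref{prop:j=2}. Concretely, I would let $\calf$ denote the composition of divided power functors prescribed by $\calf_{a,b}$ in \cref{def:bipartitioninduction}, and verify the three ingredients: (a) $\calf \spe{((ke),(2e))} \cong \spe\la$; (b) for each simple module $\D\mu\langle 2\rangle$ appearing in \cref{prop:j=2}, $\calf \D\mu\langle 2\rangle \cong \D{\calf_{a,b}(\mu)}\langle 2\rangle$; and (c) $\calf$ preserves the uniserial/Alperin structure of each summand.

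For (a), the argument from the proof of \cref{thm:generalised k>j ss} carries over without change, since at each step we apply $f_i^{(r)}$ to a Specht module whose indexing bipartition has exactly $r$ addable $i$-nodes and no removable $i$-nodes. For (b), exactly the same combinatorial bookkeeping shows that at each step each simple $\D\mu$ has $r$ conormal $i$-nodes and no removable $i$-nodes, so \cref{lem:grbranch} gives $f_i^r \D\mu = [r]!\, f_i^{(r)}\D\mu$ and a head/socle computation forces $f_i^{(r)}\D\mu \cong \D{\tilde f_i^r \mu}$ with no grading shift. Iterating, $\calf\D\mu\langle 2\rangle \cong \D{\calf_{a,b}(\mu)}\langle 2\rangle$.

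The main obstacle, and the only genuinely new content, is step (c), particularly for the diamond module in part (vi) of \cref{prop:j=2}. Here I would adapt the short-exact-sequence argument given in the proof of \cref{cor:j=1induced}: for any submodule/quotient pair $0\to N\to M\to Q\to 0$ in the structure of $\spe{((ke),(2e))}$, exactness of $\calf$ yields $0\to \calf N\to \calf M\to \calf Q\to 0$, and inductively these sequences recover the layered structure. Non-splitness is preserved by the usual trick: if some extension split under $\calf$, then applying the corresponding composition of exact functors $e_i^{(r)}$ (which send $\spe\la$ back to $\spe{((ke),(2e))}$, as in the biadjoint setup of~\cite{bk09}) would yield a splitting of the original extension, contradicting the indecomposable-summand structure already established in \cref{prop:j=2}. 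The diamond shape in case (vi) is pinned down by applying this to each of the four covering relations in the Alperin diagram simultaneously.

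For the conjugate case $\spe{\la'}$, I would combine the preceding argument with the sign-twist strategy of \cref{cor:generalised k>j ss transpose}: conjugating $\spe{((ke),(2e))}$ via $(\mathrm{sgn},\circledast)$ (using~\cite[Theorems~7.25~and~8.5]{kmr}) replaces each label $\mu$ by $m_{e,\kappa}(\mu) = \{\mu\}$, and then applying $-\calf_{a,b}$ rather than $\calf_{a,b}$ produces the bihook $\la'$ while transforming labels by $-\calf_{a,b}\circ\{\cdot\}$. The identity $-\calf_{a,b}(\{\mu\}) = m_{e,\kappa}\circ\calf_{a,b}(\mu)$ used in the statement follows from the compatibility of the Mullineux involution with divided-power induction under residue negation, exactly as invoked in the proof of \cref{cor:generalised k>j ss transpose}. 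Finally, the grading shifts $\langle 2\rangle$ are untouched throughout since each $f_i^{(r)}$ is graded, and they match the shifts already established by \cref{prop:alphahom} and \cref{prop:gammahom} applied to $\spe\la$ directly.
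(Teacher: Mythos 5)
Your proposal follows essentially the same route as the paper: the published proof is a one-sentence appeal to the observation in the proof of \cref{cor:j=1induced}, transported along $\calf_{a,b}$ (and $-\calf_{a,b}$ together with the sign twist for the conjugate case), which is exactly what you spell out. The only subtlety you correctly flag---and the paper leaves implicit---is extending the ``exact functor sending simples to simples and preserving non-split extensions'' argument from the length-three uniserial module of \cref{cor:j=1induced} to the longer uniserial summand in part (v) and the non-uniserial Alperin diagram in part (vi) of \cref{prop:j=2}, which does go through by applying the same reasoning to each covering relation.
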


\subsection{Decomposability in characteristic 2}\label{subsec:p=2}

With the Specht modules for $j=1$ and $j=2$ completely understood, we will shift away from examining cases of small $j$.
First, we will build on parts (v) and (vi) of \cref{prop:j=2}.
Observe that the condition in the next result is remarkably similar to that of Murphy~\cite[Theorem 4.5]{gm80}, which determines the decomposability of level 1 Specht modules indexed by hooks.

\begin{prop}\label{prop:p=2decomposables}
Let $k\geq j \geq 1$, $p=2$, and let $l$ be such that $2^{l-1} \leq j < 2^l$.
Then $\spe{((ke),(je))}$ is decomposable if and only if $k \not\equiv j \pmod{2^l}$.
\end{prop}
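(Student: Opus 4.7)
The plan is to translate the question into a purely combinatorial one about base-$2$ expansions, via the results already in hand, and then settle that combinatorial question by hand.

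First, by \cref{lem:no.indecompsummands}, $\spe{((ke),(je))}$ is decomposable if and only if the Young permutation module $\sM{k,j}$ over $\bbf\mathfrak{S}_{k+j}$ has more than one indecomposable summand. Since $p = 2$, \cref{thm:Henkeformula} (Henke's formula) says that the indecomposable Young summands of $\sM{k,j}$ are exactly the $\sY{(k+j-m,m)}$ with $0 \leq m \leq j$ and $j - m \leq_2 k + j - 2m$. The value $m = j$ always satisfies this condition, so $\spe{((ke),(je))}$ is decomposable if and only if there exists some $0 \leq m < j$ with $j - m \leq_2 k + j - 2m$. Substituting $t = j - m$ and $c = k - j \geq 0$, this becomes: there exists $1 \leq t \leq j$ with $t \leq_2 c + 2t$. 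Thus the proposition reduces to the combinatorial claim that such a $t$ exists if and only if $2^l \nmid c$.

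For the "only if" direction, suppose $2^l \mid c$, so the binary expansion of $c$ has $c_0 = c_1 = \dots = c_{l-1} = 0$. Then bit $0$ of $2t$ is $0$, and bit $i$ of $2t$ equals $t_{i-1}$ for $i \geq 1$. Since bits $0,\dots,l-1$ of $c$ vanish, no carries propagate into bits $0,\dots,l-1$ of $c + 2t$, and a direct calculation gives $(c+2t)_0 = 0$ and $(c+2t)_i = t_{i-1}$ for $1 \leq i \leq l-1$. The condition $t_i \leq (c+2t)_i$ for $0 \leq i \leq l-1$ then forces $t_0 = 0$ and $t_i \leq t_{i-1}$, so $t_0 = \dots = t_{l-1} = 0$, i.e.\ $2^l \mid t$. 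But $1 \leq t \leq j < 2^l$, a contradiction.

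For the "if" direction, suppose $2^l \nmid c$, and let $i$ be the smallest index with $c_i = 1$; necessarily $i \leq l - 1$. Take $t = 2^i$. Then $2t = 2^{i+1}$, and since both $c$ and $2t$ have zero bits strictly below position $i$, no carry reaches bit $i$ when forming $c + 2t$. Hence $(c+2t)_i = c_i + 0 = 1 = t_i$, and $t_j = 0 \leq (c+2t)_j$ for all $j \neq i$. Moreover $t = 2^i \leq 2^{l-1} \leq j$, so $t$ lies in the allowed range. This produces the desired $t$, completing the combinatorial claim and hence the proposition. The main (and only real) obstacle is bookkeeping with carries in the base-$2$ addition $c + 2t$; once the initial-segment analysis is in place, both directions are short.
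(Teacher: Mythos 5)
Your proof is correct and follows essentially the same route as the paper: reduce via \cref{lem:no.indecompsummands} to the summand structure of $\sM{k,j}$, apply Henke's formula (\cref{thm:Henkeformula}) to translate into the condition $j-m \leq_2 k+j-2m$, and then analyse this bit-by-bit. The minor differences are stylistic: your substitution $t=j-m$, $c=k-j$ makes the combinatorial claim cleaner, and you spell out the carry-propagation argument in the ``only if'' direction (where the paper just says the conclusion ``follows immediately'' from $k+j-2m \equiv 2(j-m) \pmod{2^l}$), whereas for the ``if'' direction the paper scans the whole family $m=j-2^a$ for $a<l$ while you pick a single witness $t=2^i$ from the lowest set bit of $c$; these are two phrasings of the same underlying observation.
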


\begin{proof}
Using the Morita equivalence $\scrm$ as before, it suffices to show that $\sM{k,j} \cong \sY{k,j}$ if and only if $k\equiv j \pmod{2^l}$.
In other words, we will show that if $0\leq m < j$, some $\sY{k+j-m,m}$ is a summand of $\sM{k,j}$ if and only if $k \not\equiv j \pmod{2^l}$.
We use \cref{thm:Henkeformula,lem:no.indecompsummands} at length, and we proceed to determine if $j-m \leq_2 k+j-2m$.

Set $m = j - 2^a$ for $a \in \{0,1, \dots,  l-1\}$.
In order for $\sY{k+j-m,m}$ to \emph{not} be a summand of $\sM{k,j}$, we must have $k-j \equiv 0,1, \dots, 2^a-1 \pmod{2^{a+1}}$.
In order for this to hold simultaneously for all $a \in \{0,1, \dots,  l-1\}$, we must have $k-j \equiv 0 \pmod{2^l}$.
We thus have shown that if $k \not\equiv j \pmod{2^l}$, then $\spe{((ke),(je))}$ is decomposable.

It remains to show that when $k \equiv j \pmod{2^l}$, no $\sY{k+j-m,m}$ appears as a summand of $\sM{k,j}$, for $m$ not of the form considered above.
If $k \equiv j \pmod{2^l}$, then $k+j-2m \equiv 2(j-m) \pmod{2^l}$.
It follows immediately that $j-m \not\leq_2 k+j-2m$ 
%more detail -- $j-m<2^l$. Look at first $l$ places in $p$-adic expansions of $j-m$ and $2(j-m)$. Any leading $0$s for $j-m$ will be $0$ for $2(j-m)$. The first $1$ for $j-m$ will correspond to a $0$ for $2(j-m)$, 
and the result follows.
\end{proof}

By applying $i$-induction functors, we deduce the following corollary.

\begin{cor}\label{cor:p=2decomposables}
Let $p=2$, $\la= ((ke + a, 1^b), (je + a, 1^b))$ or $((b + 1, 1^{je+a-1}), (b + 1, 1^{ke+a-1}))$, for some $j, k > 1$, $0 < a \leq e$ and $0 \leq b < e$ with $a + b \neq e$, or for $a = b = 0$, and let $l$ be such that $2^{l-1} \leq j < 2^l$.
Then $\spe\la$ is decomposable if and only if $k \not\equiv j \pmod{2^l}$.
\end{cor}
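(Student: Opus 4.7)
The plan is to reduce to \cref{prop:p=2decomposables}, handling the transpose bipartitions via the sign isomorphism combined with graded duality, and the non-transpose generalised bihooks via the composition of graded divided power functors.

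For the transpose case $\la = ((b+1,1^{je+a-1}),(b+1,1^{ke+a-1}))$, observe that $\la = \mu'$ where $\mu = ((ke+a,1^b),(je+a,1^b))$. Arguing as in the proof of \cref{cor:generalised k>j ss transpose}, and using that $\kappa = (0,0) = -\kappa$ so that the sign isomorphism is an autoequivalence of the category of $\scrr_n^\La$-modules, we obtain an isomorphism $\spe\la \cong ((\spe\mu)^{\operatorname{sgn}})^\circledast\langle\,\cdot\,\rangle$. Since both sign-twist and graded duality preserve the number of indecomposable summands, $\spe\la$ is decomposable if and only if $\spe\mu$ is. It thus suffices to treat the non-transpose case.

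For $\mu = ((ke+a,1^b),(je+a,1^b))$, exactly as in the proof of \cref{thm:generalised k>j ss}, applying the exact composition $\calf_{a,b}$ of graded divided power functors from \cref{def:bipartitioninduction} sends $\spe{((ke),(je))}$ to $\spe\mu$. In one direction, if $\spe{((ke),(je))} \cong A \oplus B$ is a nontrivial decomposition (i.e., $k \not\equiv j \pmod{2^l}$ by \cref{prop:p=2decomposables}), then exactness yields $\spe\mu \cong \calf_{a,b}(A) \oplus \calf_{a,b}(B)$; both summands remain nonzero because, as in the proof of \cref{thm:generalised k>j ss}, each $\calf_{a,b}$-image contains explicit Specht composition factors identifiable through the branching calculation used there. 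Conversely, a nontrivial decomposition of $\spe\mu$ pulls back through the adjoint $e$-restriction functors $e_i^{(r)}$ applied in reverse order to yield a nontrivial decomposition of $\spe{((ke),(je))}$, using at each step that $\tilde{e}_i^{r}$ precisely reverses the corresponding $\tilde{f}_i^{r}$ along our specific sequence of Specht modules. This is the same indecomposability-preservation strategy invoked in the proof of \cref{cor:j=1induced}.

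The main obstacle will be rigorously verifying that the reverse restriction preserves nontriviality of the decomposition, i.e., that applying the string of $e$-functors does not annihilate either summand. This can be settled by a graded dimension argument using \cref{KMWords1,KMwords} to track the characters $\operatorname{qdim}\widehat{\scrm}(W)_{\bgg^\nu}$: since these are positive for Specht factors of both putative summands at every step, the sequence of $e$-restrictions cannot collapse either summand to zero, forcing the contradiction with the indecomposability of $\spe{((ke),(je))}$ when $k \equiv j \pmod{2^l}$.
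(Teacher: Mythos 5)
Your overall strategy matches the paper's (the published proof is simply ``By applying $i$-induction functors, we deduce the following corollary'', so the paper itself glosses over all the details you try to fill in). The sign-twist-plus-duality reduction of the transpose case, and the forward direction (decomposability of $\spe{((ke),(je))}$ forces decomposability of $\spe\mu$) via exactness of $\calf_{a,b}$, are both correct. You also correctly identify that the genuine content of the converse is showing the reverse composition of $e$-restriction functors cannot annihilate either summand.

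However, the specific tool you propose to close that gap does not work. \cref{KMWords1} and \cref{KMwords} concern modules over the imaginary Schur algebra $\mathscr{S}_n$, which are precisely the $\scrr_{n\delta}$-modules annihilated by $\textup{Ann}_{\scrr_{n\delta}}M_n$. The Specht module $\spe{((ke),(je))}$ lies in the block of residue content $(j{+}k)\delta$ and so does qualify. But once $(a,b) \neq (0,0)$, the generalised bihook $\mu = ((ke+a,1^b),(je+a,1^b))$ has residue content that is not a multiple of $\delta$, and neither do any of the intermediate shapes encountered while applying $\calf_{a,b}$. Thus $\spe\mu$ and the intermediate Specht modules are not $\mathscr{S}_n$-modules, the Morita equivalence $\widehat{\scrm}$ does not apply, and there is nothing for \cref{KMWords1,KMwords} to say. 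Your ``graded dimension argument at every step'' therefore cannot even be set up.

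The correct way to close the gap is purely via the branching theory of simples (\cref{lem:grbranch} and its $e$-restriction analogue). As shown in the proof of \cref{thm:generalised k>j ss}, for every composition factor $\D\xi$ of $\spe{((ke),(je))}$ the sequence of divided power functors comprising $\calf_{a,b}$ sends $\D\xi$ to a simple module $\D{\calf_{a,b}(\xi)}$, because at each step the relevant bipartition has exactly the right number of conormal $i$-nodes. Exactness then gives that every composition factor of $\spe\mu$ is of this form. By the symmetric analysis of normal $i$-nodes, the reverse composition $\calg$ of $e_i^{(r)}$ functors sends $\D{\calf_{a,b}(\xi)}$ back to $\D\xi \neq 0$. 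Now if $\spe\mu \cong A \oplus B$ with both $A,B$ nonzero, then $A$ contains some composition factor $\D{\calf_{a,b}(\xi)}$; since $\calg$ is exact, $\calg(A)$ has $\calg(\D{\calf_{a,b}(\xi)}) \cong \D\xi \neq 0$ as a subquotient, so $\calg(A) \neq 0$, and similarly $\calg(B) \neq 0$. This gives the desired nontrivial decomposition $\spe{((ke),(je))} \cong \calg(A) \oplus \calg(B)$, contradicting \cref{prop:p=2decomposables} when $k \equiv j \pmod{2^l}$.
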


\begin{rem}
The above corollary fills a gap in \cite[Theorem 4.1]{ss20}, where we were unable to determine the decomposability of $\spe\la$ when $p=2$ and $j+k$ is even.
This also strengthens \cite[Conjecture 4.2]{ss20}: if $e\neq2$, we conjecture that \cite[Theorems 3.8 and 4.1]{ss20} and \cref{cor:p=2decomposables} provide a complete list of decomposable Specht modules indexed by bihooks.
\end{rem}

\subsection{Almost-semisimple Specht modules}\label{subsec:almostss}

Next, we examine cases that are `close to being semisimple', in the sense that every direct summand contains few simple factors.
We saw in \cref{thm:k>jss} that $\spe{((ke),(je))}$ is semisimple when $p$ does not divide any of the integers $k+j,k+j-1,\dots,k-j+2$.
The next case we look at is when $p$ divides exactly one of these integers.

\begin{prop}\label{twopartdecompnumbers}
Let $k \geq j > 1$, $m\in\{0,\dots,j\}$, $\la = (2^m,1^{n-2m})$ where $n=k+j$, and suppose that $p$ divides exactly one of the integers $n, n-1, \dots, n-2j+2$, so that $n \in \{ap, ap+1, \dots, ap+2j-2\}$ for some $a\in\bbn$.
We write $n = ap +i$ for $i\in \{0,1,\dots, 2j-2\}$.

\begin{enumerate}[label=(\roman*)]
\item If $(m,j) \neq (p,p)$, then
%$i\in \{0,1,\dots,j-2\}$, then
\[
\Delta(\la) \cong
\begin{cases}
\sL\la & \text{if } m\in\{0,1,\dots,\lfloor \frac{i+1}{2} \rfloor\} \cup \{i+2,i+3,\dots,j \}; \\
\sL{2^{i-m+1},1^{n -2i +2m -2}} \hspace{2pt} |\hspace{1pt} \sL\la
& \text{if } m\in\{\lfloor \frac{i+1}{2} \rfloor + 1, \dots, i+1\}.
\end{cases}
\]

\item If $i = j-1$ and $(m,j) = (p,p)$, then
\begin{align*}
\Delta(\la) \cong
\begin{cases}
\sL\la & \text{if } k+1\equiv p \pmod{p^2}; \\
\sL{1^n} \hspace{2pt} |\hspace{1pt} \sL\la& \text{if } k+1\equiv 0 \pmod{p^2}.
\end{cases}
\end{align*}
\end{enumerate}
\end{prop}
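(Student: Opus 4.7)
The plan is to apply \cref{thm:decompnos} directly to compute the multiplicities $[\Delta(\la):\sL{(2^{m'},1^{n-2m'})}]$ for each $0 \leq m' \leq m$. Since $\Delta(\la)$ has simple head $\sL\la$ of multiplicity one, and since all composition factors must have this two-column form with $m' \leq m$, showing that at most one value $m' < m$ contributes a nontrivial factor (and that the multiplicity is one) will force the uniserial structure $\sL{(2^{m'},1^{n-2m'})} \hspace{2pt}|\hspace{1pt} \sL\la$ claimed.

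The first step is to extract the numerical constraints imposed by the hypothesis. The assumption that $p$ divides exactly one of the $2j-1$ consecutive integers $n-2j+2, \ldots, n$ forces $j \leq p$, with $j = p$ forcing $i = p-1$. In case~(i), the further assumption $(m,j) \neq (p,p)$ combined with $m \leq j \leq p$ then yields $m < p$. With this in hand, for any $m' < m$ we have $0 < m - m' < p$, so $\lfloor (m-m')/p \rfloor = 0$ and condition~(a) of \cref{thm:decompnos} is automatic. Condition~(b) reduces, after writing $n = ap + i$, to $m + m' \equiv i + 1 \pmod{p}$; since $0 \leq m + m' < 2p$, this yields at most two candidate values, $m' = i + 1 - m$ and $m' = i + 1 + p - m$. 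I will rule out the latter by observing that its validity would require $2j \geq i + p + 2$, i.e.\ $i \leq 2j - p - 2$, which contradicts either the bound $i \geq 2j - p - 1$ forced by the exactly-one hypothesis when $j > (p+1)/2$, or the trivial bound $i \geq 0$ when $j \leq (p+1)/2$ (in which case $2j - p - 2 < 0$). The former candidate $m' = i + 1 - m$ lies in $\{0, \ldots, m-1\}$ precisely when $m \in \{\lfloor(i+1)/2\rfloor + 1, \ldots, i+1\}$, matching the dichotomy in the proposition, and yielding the factor $\sL{(2^{i-m+1},1^{n-2i+2m-2})}$ of multiplicity one when $m$ lies in that range.

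For case~(ii), we have $m = j = p$ and $i = p-1$, so condition~(a) is no longer automatic. For $m' < p$, condition~(b) still forces $m' = 0$: either $p \mid m - m' = p - m'$ yields $m' = 0$, or $p \mid n - m - m' + 1 = ap - m'$ again yields $m' = 0$. Thus $\sL{(1^n)}$ is the only candidate extra composition factor. I then verify condition~(a) for it: $\lfloor p/p \rfloor = 1$ and $\lfloor (n+1)/p \rfloor = \lfloor (a+1)p/p \rfloor = a+1$, so the relation $1 \preccurlyeq_p a + 1$ is equivalent to $a \equiv 0 \pmod{p}$; substituting $k + 1 = ap$ (which follows from $n = ap + p - 1$ and $k = n - j$) gives precisely $p^2 \mid k+1$, as stated.

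The main obstacle will be the bookkeeping in case~(i) that excludes the second candidate $m' = i + 1 + p - m$: this requires cleanly separating the sub-cases $j \leq (p+1)/2$ and $j > (p+1)/2$ and carefully matching the constraints on $i$ coming from the exactly-one-divisibility hypothesis against those forced by the requirement $m \leq j$. Once that is done, everything else reduces to direct verification of inequalities and a final appeal to the simple-head property of $\Delta(\la)$ to promote the composition-factor count to the uniserial statement.
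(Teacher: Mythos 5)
Your proposal is correct and follows essentially the same approach as the paper's proof: both apply \cref{thm:decompnos} directly to compute $[\Delta(\la):\sL{(2^{m'},1^{n-2m'})}]$ for $m'<m$, note that $p\geq j\geq m>m'$ makes condition (a) automatic outside the exceptional case, identify the candidate $m'=i+1-m$, and rule out the extra residue-class representative using the lower bound on $p$ (equivalently, on $i$) forced by the exactly-one-divisor hypothesis. The only real difference is organizational: the paper splits into the two sub-cases $i\in\{0,\dots,j-2\}$ and $i\in\{j-1,\dots,2j-2\}$ and works with explicit interval bounds $-2p<i-m-l+1<p$ and $-p<i-m-l+1<2p$ respectively, whereas you work with the single congruence $m+m'\equiv i+1\pmod p$ on the range $[0,2p)$, which streamlines the bookkeeping slightly. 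Your phrase ``the bound $i\geq 2j-p-1$ forced by the exactly-one hypothesis'' quietly absorbs both of the paper's lower bounds ($p\geq 2j-i-1$ for $i\leq j-2$ and $p\geq i+1$ for $i\geq j-1$) into one statement, which is fine but worth making explicit in a write-up; likewise you implicitly use $i\leq p-1$ (also forced by the hypothesis) to justify that the only candidates mod $p$ are $i+1$ and $i+1+p$, since $i+1-p$ would be non-positive. The treatment of case (ii) is identical to the paper's.
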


\begin{proof}
Let $\mu=(2^l,1^{n-2l})$ for $0\leq l \leq m$ and compute $[\Delta(\la):\sL\mu]$.
Since $[\Delta(\la):\sL\la]=1$, we suppose that $l<m$ and apply \cref{thm:decompnos} to determine $[\Delta(\la):\sL\mu]$, which is either $0$ or $1$.
	
Since $p$ divides exactly one of $n, n-1, \dots, n-2j+2$, it follows that $p \geq j \geq m >l \geq 0$.
Thus, with the exception of the special case where $p=j=m$, $l=0$, we have that $\lfloor \frac{m-l}{p} \rfloor =0 \preccurlyeq_p \lfloor \frac{n-2l+1}{p} \rfloor$ and $p \nmid m-l$.
Hence $[\Delta(\la):\sL\mu] = 1$ if and only if $p \mid n-m-l+1$.
The exceptional case only occurs when $p \mid k+1$ (so that $i = j-1$), and there we are looking at whether $\sL{1^n}$ occurs as a composition factor of $\Delta(2^j,1^{n-2j})$.
Applying \cref{thm:decompnos} again, we see that in this case it is a composition factor precisely when $1 \preccurlyeq_p a+1$, or in other words exactly when $p\mid a$.
Since $k+1 = ap$ here, this is equivalent to $p^2 \mid (k+1)$.
This situation occurs in case (ii) of the proposition statement, which completes the proof of that case.
We now treat the remaining case.

First suppose that $i \in \{0,1, \dots, j-2$\}.
Then
\[
i-m-l+1 \leq i-m+1 \leq j - 1 - m < j - m <  j \leq p
\]
and
\[
i-m-l+1 \geq i-2m+2 \geq -2m+2 \geq -2j+2 > -2p.
\]
It follows that $-2p < i-m-l+1 < p$, and thus $p \mid n-m-l+1$ if and only if $l = i-m+1$ or $l = i-m+1+p$.
Since $i\in \{0,1, \dots, j-2\}$, the condition that $p$ divides $k+j-i$ and no other integers in $[k-j+2, k+j]$ implies that $p\geq 2j - i -1$, and hence that $i-m+1+p \geq m$.
Since $0\leq l < m$, it follows that $p \mid n-m-l+1$ can only happen when $l = i-m+1$ and $m \in \{\lfloor\frac{i+1}{2}\rfloor + 1, \dots, i+1\}$.

Now suppose that $i \in \{j-1, j, \dots, 2j-2 \}$.
Then
\[
i-m-l+1 < i+1 \leq 2j-1 < 2p
\]
and
\[
i-m-l+1 \geq j-m-l \geq -l > -p.
\]
So $-p < i -m -l +1 < 2p$, and thus $p \mid n-m-l+1$ if and only if $l = i-m+1$ or $l = i-m+1-p$.
Since $i\in \{j-1,j, \dots, 2j-2\}$, the condition that $p$ divides $k+j-i$ and no other integers in $[k-j+2, k+j]$ implies that $p\geq i +1$, and hence that $i-m+1-p \leq -m \leq -1$.
Once again, we use that $0\leq l < m$ to see that $p \mid n-m-l+1$ can only happen when $l = i-m+1$ and $m \in \{\lfloor\frac{i+1}{2}\rfloor + 1, \dots, i+1\} \cap \{0,1,\dots,j\} = \{\lfloor\frac{i+1}{2}\rfloor + 1, \dots, j\}$.

Hence for any $i\in \{0,1,\dots, 2j-2\}$, we have $[\Delta(\la):\sL\mu] = 1$ if and only if $l=i-m+1$ and $m\in \{\lfloor\frac{i+1}{2}\rfloor + 1, \dots, i+1\}$.
\end{proof}

\begin{rem}
Note that the case $(m,j)=(p,p)$ \emph{only} occurs when $i=j-1$, which follows from the lower bounds on $p$ given in the above proof.
\end{rem}

Next, we decompose $\sM{n-j,j}$ into its indecomposable summands.

\begin{prop}\label{prop:noofsummands}
Let $k\geq j >1$, $k+j= n =ap+i$ for $i\in\{0,\dots,2j-2\}$, and assume that $p$ divides exactly one of $n,n-1,\dots,n-2j+2$.
Then
\[
\sM{n-j,j} \cong \bigoplus_{m=0}^{i-j} \sY{n-m,m}
\oplus \bigoplus_{m=\lceil \frac{i+1}{2}\rceil}^j \sY{n-m,m},
\]
unless $j=p$, $i=j-1$ (so that $p\mid k+1$), and $k+1 \equiv p \pmod{p^2}$, in which case
\[
\sM{n-j,j} \cong \sY{n}
\oplus \bigoplus_{m=\lceil \frac{i+1}{2}\rceil}^j \sY{n-m,m}.
\]

\end{prop}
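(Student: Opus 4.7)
The plan is to determine, for each $m \in \{0, 1, \dots, j\}$, whether the Henke criterion $j - m \leq_p n - 2m$ from \cref{thm:Henkeformula} holds; by \cref{lem:no.indecompsummands} each such $m$ contributes the summand $\sY{n-m, m}$, and these are the only indecomposable summands. A preliminary step is to show that the hypothesis forces $p \geq j$ and $i \leq p - 1$: the multiples of $p$ in $[n-2j+2, n]$ are spaced by $p$, and $ap = n - i$ is always such a multiple; excluding $(a \pm 1)p$ from this range gives $p > i$ and $p > 2j - 2 - i$, so $p > \max(i, 2j-2-i) \geq j - 1$. In particular, $i$, and (for $m \geq 1$) $j - m$, each have a single base-$p$ digit.

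I would then split on the sign of $i - 2m$. For $m \leq \lfloor i/2 \rfloor$, one has $n - 2m = ap + (i - 2m)$ with units digit $i - 2m \in [0, p-1]$; in the generic case $j - m < p$ the inequality reduces to $j - m \leq i - 2m$, i.e., $m \leq i - j$, producing the first summand block $\{0, \dots, i-j\}$ (empty if $i < j$). For $m \geq \lceil (i+1)/2 \rceil$, one borrow gives $n - 2m = (a-1)p + (p + i - 2m)$ whenever $2m - i \leq p$, and the resulting condition $m \leq p + i - j$ follows from $p + i \geq 2j$ or from $m \leq j - 1$; at the boundary $m = j$ with $p + i < 2j$, forcing $2m - i > p$, a second borrow yields $n - 2m = (a-2)p + (2p + i - 2m)$ with units digit in $[0, p-1]$, and $0 \leq 2p + i - 2m$ holds trivially (the requirement $a \geq 2$ follows from $k \geq j$). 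This produces the second summand block $\{\lceil (i+1)/2 \rceil, \dots, j\}$.

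Finally, I would treat the exceptional case $p = j$, for which the inequalities above force $i = j - 1$. The only deviation occurs at $m = 0$, where $j - m = p$ has base-$p$ expansion $[0, 1]$ rather than a single digit, so the Henke criterion additionally demands that the $p$-digit of $n = ap + (p-1)$ be $\geq 1$, equivalently $a \not\equiv 0 \pmod p$. Since $k + 1 = ap$, this is the modular condition on $k + 1$ stated in the proposition, giving the extra summand $\sY{n}$ precisely in that situation. The main obstacle will be clean book-keeping of base-$p$ arithmetic across the borrow boundary $2m = i$, especially when $p$ is close to $j$; but the relevant bounds are all controlled by the window constraint $p > \max(i, 2j-2-i)$ derived at the outset.
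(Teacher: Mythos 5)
Your proof is correct and takes essentially the same route as the paper's: apply Henke's criterion (\cref{thm:Henkeformula}) for each $m$, use the window constraint to bound $p > \max(i, 2j-2-i)$, split on the sign of $i-2m$, and isolate the $j = p$, $m = 0$ anomaly. The only presentational difference is that you avoid the paper's secondary case split on $i < j$ versus $i \geq j$, and you dispatch $m = j$ via a double-borrow computation where the paper simply notes $\sY{n-j,j}$ is tautologically a summand of $\sM{n-j,j}$; also, your derived condition ``$a \not\equiv 0 \pmod p$'' (equivalently $p^2 \nmid k+1$) is the clean form of the exceptional hypothesis, which the paper states in the slightly imprecise form ``$k+1 \equiv p \pmod{p^2}$.''
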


\begin{proof}	
We apply \cref{thm:Henkeformula} to determine if $\sY{n-m,m}$ is a direct summand of $\sM{n-j,j}$ for all $m\in\{0,\dots,j\}$.

It is clear that $\sY{n-j,j}$ is a summand of $\sM{n-j,j}$, so we suppose that $m<j$.
Now, $1 \leq j-m \leq j  \leq p$.
Then the $p$-adic expansion of $j-m$ is $[j-m,0,0,\dots]$ except for when $j=p$ (and thus $p \mid k+1$ and $i = j-1$) and $m=0$, in which case $j-m$ has $p$-adic expansion $[0,1,0,0,\dots]$.

If $i\in \{0,1, \dots, j-1\}$, then the condition that $p$ divides $k+j-i$ and no other integers in $[k-j+2, k+j]$ implies that $p\geq 2j - i -1$, and therefore that
\[
p+i-2m \geq 2j-1 -2m \geq 1.
\]
Similarly, if $i\in \{j-1, j, \dots, 2j-2\}$, we have that $p\geq i+1$, and therefore
\[
p+i-2m \geq 2i+1 -2m \geq 2(i-j)+3.
\]
Thus if $i\in \{0,1,\dots, 2j-2\}$ and $m\in \{i+1,i+2,\dots, j-1\}$, we have $0 < p+i-2m \leq p-2$.
So the $p$-adic expansion of $p+i-2m$ is $[p+i-2m,0,0,\dots]$, and $j-m \leq_p p+i-2m \leq_p n-2m$.
It follows that $\sY{n-m,m}$ is always a summand of $\sM{n-j,j}$ when $m\in \{i+1,i+2,\dots, j-1\}$.

Next, suppose that $i\in \{0,1,\dots, j-1\}$ and $m \in \{0,1,\dots, i\}$.
Then $0 \leq i-m \leq i <j$, so $-m \leq i-2m < j-m \leq p$.

If $m\leq \lfloor i/2 \rfloor$, then $0\leq i-2m$ so that the $p$-adic expansion of $i-2m = n-ap -2m$ is $[i-2m,0,0,\dots]$, and it follows that $j-m \not\leq_p n-2m$, except in the case that $j=p$ and $m=0$.
In this exceptional case, we must look at the second entry in the $p$-adic expansion of $n$.
In particular, if $k+1 \equiv 0 \pmod{p^2}$, then $n \equiv p-1 \pmod{p^2}$, so that the first two entries of the $p$-adic expansion of $n$ are $p-1$ and $0$, and thus $j-m \not\leq_p n-2m$ again.
But if $k+1 \equiv p \pmod{p^2}$, then $n \equiv 2p-1 \pmod{p^2}$, so that the first two entries of the $p$-adic expansion of $n$ are $p-1$ and $1$, and thus $j-m \leq_p n-2m$ in this case.
This is the exceptional case in the statement of the proposition.

If $m > i/2$, then $1 \leq p+ i-2m < p$, so that the $p$-adic expansion of $p + i-2m = n-(a-1)p -2m$ is $[p+i-2m,0,0,\dots]$.
Since $j \leq p \leq p+i-m$, it follows that $j-m \leq p+i - 2m$, so that $j-m \leq_p p + i-2m \leq_p n-2m$.

Finally, suppose that $i\in \{j, j+1, \dots, 2j-2\}$ and $m \in \{0,1,\dots, i\}$.
Then, as above, $p\geq i+1$, so that we have
\[
-p < -i \leq i-2m \leq i < p \quad \text{ or, equivalently, } \quad 0 < p+i-2m < 2p.
\]
If $m \leq i/2$, then the above becomes $0 \leq i-2m < p$, so that the $p$-adic expansion of $i-2m$ is $[i-2m,0,0,\dots]$, so that $j-m \leq_p n - 2m$ if and only $j-m \leq_p i-2m$, if and only if $m \leq i-j$.
If instead, $m > i/2$, then the above becomes $0 < p+ i - 2m < p$, and the result follows as before.
This completes the proof.
\end{proof}

We now combine the above results to obtain the decomposition of the tensor product of Weyl modules. In the following theorem, we abuse notation and write $\sLx{x}$ to mean $\sL{2^x,1^{n-2x}}$.

\begin{thm}\label{thm:Schuralmostss}
Let $k\geq j >1$ and suppose that $p$ divides exactly one of the integers $k+j, k+j-1, \dots, k-j+2$, so that $n = k+j \in \{ap, ap+1, \dots, ap+2j-2\}$ for some $a\in\bbn$.
We write $n = ap +i$ for $i\in \{0,1,\dots, 2j-2\}$.
%We abuse notation and write $\sLx{x}$ to mean $\sL{2^x,1^{n-2x}}$.
For $r \in \{0,1,  \dots, \lfloor i/2 \rfloor\}$, define the uniserial module 
\[
N_r := \sLx{\lfloor i/2 \rfloor - r} \hspace{2pt} |\hspace{1pt} \sLx{\lceil i/2 \rceil + 1 + r} \hspace{2pt} |\hspace{1pt} \sLx{\lfloor i/2 \rfloor - r}.
\]
First we assume that we are not in the case that $j=p$ and $k+1 \equiv p \pmod{p^2}$.
\begin{enumerate}[label=(\roman*)]
	\item Suppose that $i \in \{0,1,\dots, j-1\}$.
		\begin{enumerate}
			\item If $i$ is odd, then
			\[
			\Delta(k) \otimes \Delta(j) \cong
			\sLx{\frac{i+1}{2}}
			\oplus
			\bigoplus_{r=0}^{\lfloor \frac{i}{2} \rfloor} N_r
			\oplus
			\bigoplus_{m=i+2}^{j} \sLx{m}
			\]
			\item If $i$ is even, then
			\[
			\Delta(k) \otimes \Delta(j) \cong
			\bigoplus_{r=0}^{\lfloor \frac{i}{2} \rfloor} N_r
			\oplus
			\bigoplus_{m=i+2}^{j} \sLx{m}
			\]
		\end{enumerate}
	\item Suppose that $i\in \{j,j+1, \dots, 2j-2\}$.
		\begin{enumerate}
			\item If $i$ is odd, then
			\[
			\Delta(k) \otimes \Delta(j) \cong
			\bigoplus_{m=0}^{i-j} \sLx{m}
			\oplus
			\sLx{\frac{i+1}{2}}
			\oplus
			\bigoplus_{r=0}^{j - \lceil \frac{i}{2} \rceil -1} N_r
			\]
			\item If $i$ is even, then
			\[
			\Delta(k) \otimes \Delta(j) \cong
			\bigoplus_{m=0}^{i-j} \sLx{m}
			\oplus
			\bigoplus_{r=0}^{j - \lceil \frac{i}{2} \rceil -1} N_r
			\]
		\end{enumerate}
\end{enumerate}

Finally, in the exceptional case that $j=p$, $i=j-1$, $k+1 \equiv p \pmod{p^2}$. Then
	\[
	\displaystyle{
		\Delta(k) \otimes \Delta(j) \cong }
	\begin{cases}
	\displaystyle{
		\sLx0 \oplus \sLx1 \oplus \sLx2 }
	&\text{ if $j=p=2$,}\\
	\displaystyle{
		\sLx0 \oplus
		\bigoplus_{r=0}^{\lfloor \frac{i}{2} \rfloor - 1} N_r
		\oplus \sLx{j} }
	&\text{ if $j=p>2$.}
	\end{cases}
	\]
\end{thm}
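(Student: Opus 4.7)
The plan is to leverage the fact that $\Delta(1^k) \otimes \Delta(1^j)$ is a tilting module for $S_\bbf(n,n)$ (it is a tensor product of the Weyl/tilting modules $\Delta(1^k) = \operatorname{T}(1^k)$ and $\Delta(1^j) = \operatorname{T}(1^j)$), hence decomposes uniquely into a direct sum of indecomposable tilting modules. By \cref{lem:Pierifilt} it admits a Weyl filtration with each $\Delta(2^m,1^{n-2m})$ appearing precisely once for $m=0,1,\dots,j$. The structure of each of these Weyl modules is supplied by \cref{twopartdecompnumbers}: for $m \in \{\lfloor (i+1)/2\rfloor+1,\dots, i+1\}$ (outside the exceptional case $(m,j)=(p,p)$) the Weyl module is uniserial of length two, $\Delta(2^m,1^{n-2m}) \cong \sL{2^{i-m+1},1^{n-2i+2m-2}} \hspace{2pt} |\hspace{1pt} \sL{2^m,1^{n-2m}}$, and it is simple otherwise.

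The crucial combinatorial observation is that the involution $m \leftrightarrow i-m+1$ on $\{0,1,\dots,i+1\}$ pairs each non-simple index with a simple index, with a unique fixed point at $m=(i+1)/2$ when $i$ is odd. For such a pair, the uniserial extension coming from $\Delta(2^m,1^{n-2m})$ already provides two stacked layers $\sL{2^{i-m+1}} \hspace{2pt} |\hspace{1pt} \sL{2^m}$ (in socle-to-head order), while $\Delta(2^{i-m+1},1^{n-2i+2m-2}) \cong \sL{2^{i-m+1}}$ contributes a second copy of the bottom factor. Since $\Delta(1^k) \otimes \Delta(1^j)$ is self-dual (as a tilting module), the indecomposable summand of the tensor product containing this pair must be self-dual, and the only configuration compatible with this structural data and with the Weyl filtration is the uniserial module $N_r = \sLx{\lfloor i/2\rfloor - r} \hspace{2pt} |\hspace{1pt} \sLx{\lceil i/2\rceil +1+r} \hspace{2pt} |\hspace{1pt} \sLx{\lfloor i/2\rfloor - r}$ obtained by stacking the Weyl module on top of its dual, with the appropriate matching of $r$ to the pair. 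The remaining simple Weyl modules $\Delta(2^m,1^{n-2m}) \cong \sLx{m}$, whose indices are unpaired with any non-simple index (namely $m \in \{0,\dots,i-j\} \cup \{i+2,\dots,j\}$, along with the fixed point $(i+1)/2$ when $i$ is odd), contribute standalone simple summands.

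To confirm that this accounts for the whole module I would match the count of indecomposable summands against \cref{prop:noofsummands} (via \cref{lem:no.indecompsummands}): a direct tally shows the proposed decomposition has exactly the predicted number of summands in each subcase of (i) and (ii), so no alternative distribution (such as combining two pairs into a single larger tilting summand, or splitting $N_r$ as a direct sum) is possible. The exceptional case $j=p$, $k+1\equiv p\pmod{p^2}$ (so that $i=j-1$) is handled by noting that here \cref{twopartdecompnumbers}(ii) tells us $\Delta(2^p,1^{n-2p})$ is simple rather than uniserial; the pair $(j,0)$ therefore degenerates and produces two separate simple summands $\sLx{0} \oplus \sLx{j}$ in place of $N_{\lfloor i/2\rfloor} = \sLx{0}\hspace{2pt} |\hspace{1pt} \sLx{j} \hspace{2pt} |\hspace{1pt} \sLx{0}$, which again matches the summand count in the exceptional formula.

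The main obstacle is verifying that each non-simple/simple pair really produces a single uniserial tilting summand of length three rather than, for instance, $\sLx{i-m+1} \oplus (\sLx{i-m+1} \hspace{2pt} |\hspace{1pt} \sLx{m})$. This is forced by the fact that every indecomposable summand of $\Delta(1^k) \otimes \Delta(1^j)$ is an indecomposable tilting module, coupled with the identification (via BGG-type reciprocity $(\operatorname{T}(\la):\Delta(\mu)) = [\nabla(\mu):\sL{\la}]$ applied to the Weyl composition multiplicities computed in \cref{twopartdecompnumbers}) of the relevant tilting module $\operatorname{T}(2^m,1^{n-2m})$ in the non-simple range as exactly $N_r$; the counting argument above then guarantees each such tilting module appears precisely once, completing the proof.
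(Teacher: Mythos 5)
Your proposal is correct and follows essentially the same route as the paper: combine the two‑term structure of the individual Weyl modules from \cref{twopartdecompnumbers} with the summand count from \cref{prop:noofsummands} (via \cref{lem:no.indecompsummands}) and the self‑duality of the indecomposable (tilting) summands of $\Delta(1^k)\otimes\Delta(1^j)$, which forces each non‑simple/simple pair under $m\leftrightarrow i-m+1$ to produce the uniserial module $N_r$. The only blemish is the `BGG‑type reciprocity' $(\operatorname{T}(\la):\Delta(\mu)) = [\nabla(\mu):\sL{\la}]$ quoted near the end: as written this is not the correct Ringel-duality statement for $S_\bbf(n,n)$ (that involves conjugate partitions, e.g.\ $(\operatorname{T}(\la):\Delta(\mu)) = [\Delta(\mu'):\sL{\la'}]$, and the formula you wrote would in fact give $0$ for the relevant pair since $\la\doms\mu$) — but since you immediately fall back on the counting argument, which is what the paper itself uses, the proof remains sound.
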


\begin{proof}
We know from \cref{twopartdecompnumbers} that the Weyl module $\Delta(\la)$ is simple if $m\in\{0,1,\dots,\lfloor \frac{i+1}{2}\rfloor\} \cup\{i+2,i+3,\dots,j\}$, whereas if $m\in\{\lfloor \frac{i+1}{2}\rfloor+1,\dots,i+1\}$, then $\Delta(\la) \cong \sL{\mu} \hspace{2pt} |\hspace{1pt} \sL{\la}$, where $\mu = (2^{i-m+1},1^{n-2i+2m-2})$.
Moreover, we obtain a self-dual module by stacking the Weyl modules $\Delta(\la)$ and $\Delta(\mu) = \sL{\mu}$ to give $\sL{\mu} \hspace{2pt} |\hspace{1pt} \sL{\la} \hspace{2pt} |\hspace{1pt} \sL{\mu}$.
Examining the five cases in turn, along with knowing the number of summands we must obtain -- by \cref{lem:no.indecompsummands,prop:noofsummands} -- the result follows.
\end{proof}

Applying the map $\scrt$ from \cref{def:moritamaponlabels} and the observation in the proof of \cref{cor:j=1induced} immediately yields the following.

\begin{thm}\label{thm:pdiv1}
Let $k\geq j >1$ and suppose that $p$ divides exactly one of the integers $k+j, k+j-1, \dots, k-j+2$, so that $n = k+j \in \{ap, ap+1, \dots, ap+2j-2\}$ for some $a\in\bbn$.
We write $n = ap +i$ for $i\in \{0,1,\dots, 2j-2\}$.
\begin{enumerate}[label=(\roman*)]
	\item Then the structure of $\spe{((ke),(je))}$ is obtained from \cref{thm:Schuralmostss} by replacing each simple module $\sL\mu$ therein with $\D{\scrt (\mu)}\langle j \rangle$.
	\item Let $\la = ((ke + a, 1^b), (je + a, 1^b))$, for some $k \geq j\geq 1$, with $0 < a \leq e$ and $0 \leq b < e$ with $a + b \neq e$, or for $a = b = 0$. Then the structure of $\spe\la$ is obtained from \cref{thm:Schuralmostss} by replacing each simple module $\sL\mu$ therein with $\D{\calf_{a,b}\circ\scrt (\mu)}\langle j \rangle$
	\item Let $\la = ((b+1,1^{je+a-1}), (b+1,1^{ke+a-1}))$, for some $k \geq j\geq 1$, with $0 < a \leq e$ and $0 \leq b < e$ with $a + b \neq e$, or for $a = b = 0$. Then the structure of $\spe{\la}$ is obtained from \cref{thm:Schuralmostss} by replacing each simple module $\sL\mu$ therein with $\D{-\calf_{a,b}(\{\scrt (\mu)\})}\langle j \rangle = \D{m_{e,\kappa}\circ\calf_{a,b}\circ\scrt (\mu)}\langle j \rangle$.
\end{enumerate}
\end{thm}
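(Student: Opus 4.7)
The plan is to deduce all three parts of the theorem from \cref{thm:Schuralmostss} by essentially ``the same black-box recipe'' used in the previous passages of the paper: first transport information across the Morita equivalence of \cref{lem:moritaequiv}, then transport it along the graded divided power functors, and finally twist by the sign isomorphism.

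For part (i), I would begin with \cref{lem:moritaequiv}, which identifies $\spe{((ke),(je))}$ (as an $\mathscr{S}_n$-module) with $\scrm$ applied to $\Delta^n(1^k)\otimes \Delta^n(1^j)$, up to a grading shift of $\langle j\rangle$. The functor $\widehat{\scrm}$ is exact and preserves indecomposable summands, and by \cref{lem:charchange} it sends the simple $S_{\mathbb{F}}(n,n)$-module $\sL\mu$ (for $\mu$ a two-column partition) to $\D{\scrt(\mu)}$. Applying $\widehat{\scrm}$ to the decomposition of \cref{thm:Schuralmostss} then yields the claimed decomposition of $\spe{((ke),(je))}$; the uniserial summands $N_r$ translate directly, since an exact equivalence of categories takes uniserial modules to uniserial modules with the corresponding composition factors. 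The fact that every simple factor is shifted by the same degree $j$ was already established at the end of the proof of \cref{thm:k>jss} via the homomorphisms of \cref{prop:alphahom,prop:gammahom} together with self-duality (\cref{prop:selfdualspecht}), and the identical argument applies here.

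For part (ii), I would apply the composition of divided power functors $\calf_{a,b}$ exactly as in the proof of \cref{thm:generalised k>j ss}. These functors are exact, graded, and, when applied at each step of $\calf_{a,b}$, take each $\D\mu$ appearing in our decomposition to $\D{\calf_{a,b}(\mu)}$: this is because at each intermediate stage the relevant bipartition has all of its addable $i$-nodes conormal (with none removable), so \cref{lem:grbranch} together with $f_i^n \cong [n]!f_i^{(n)}$ implies $f_i^{(r)}\D\mu = \D{\tilde f_i^{\,r}\mu}$. To see that the uniserial structure of each $N_r$ is preserved, I would use the argument given in \cref{cor:j=1induced}: exactness of $\calf_{a,b}$ forces the resulting module to have a submodule and a quotient of the correct form; non-split extensions must be preserved, since otherwise applying an appropriate composition of the inverse divided power functors $e_i^{(r)}$ would manufacture a contradictory decomposition of the input Specht module.

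For part (iii), I would follow the template of \cref{cor:generalised k>j ss transpose} and argue that the conjugate Specht module arises by applying $(-)^{\operatorname{sgn}}$ followed by the $\circledast$-dual (or, equivalently, precomposing with the row-versus-column dictionary of \cite[Theorems 7.25 \& 8.5]{kmr}) to the modules treated in part (ii). The sign isomorphism sends each simple $\D\nu$ to $\D{m_{e,\kappa}(\nu)}$, and preserves the uniserial structure of the $N_r$-summands up to reversing socle/head; combined with self-duality, the result is a decomposition with the same shape. The identification $m_{e,\kappa}\circ\calf_{a,b} = -\calf_{a,b}\circ\{\,\cdot\,\}$ on the relevant labels was explained in \cref{cor:generalised k>j ss transpose} and its remark, and carries over verbatim.

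I expect no single step to be a serious obstacle, since each ingredient already appears in the paper. The most delicate bookkeeping will be verifying that the uniserial structure of each $N_r$ genuinely survives application of $\calf_{a,b}$ and of the sign twist (in particular, that no extension accidentally splits and that the grading shifts of the three layers remain correctly aligned); this is what the indirect argument via exactness and the inverse divided powers is designed to pin down.
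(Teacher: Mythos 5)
Your proposal is correct and matches the paper's (extremely terse, one-sentence) proof in substance: part (i) goes through the Morita equivalence of \cref{lem:moritaequiv} together with the label correspondence of \cref{lem:charchange}, part (ii) applies the divided-power composite $\calf_{a,b}$ using the preserved-extensions argument given in the proof of \cref{cor:j=1induced}, and part (iii) composes with the sign twist and Mullineux identification exactly as in \cref{cor:generalised k>j ss transpose}. The only small point of style is that for the uniformity of grading shifts in part (i), the paper prefers the direct character argument noted in the remark following \cref{lem:charchange} (via \cref{indprodS} and \cref{KMWords1}, all simple factors of $\spe{((ke),(je))}$ necessarily carry the same shift $j$), which is cleaner in this non-semisimple setting than re-running the homomorphism-plus-self-duality argument of \cref{thm:k>jss}; both routes, however, are valid.
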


We finish with an example to illustrate that even when all of the relevant Weyl modules have at most two composition factors, we do not have to go so far to see some difficult module structures appear.
With this example in mind, it is unclear how much further one can hope to push the results of \cref{thm:Schuralmostss,thm:pdiv1}.

\begin{Example}\label{eg:5factorsummand}
Let $p=3$, $k=7$, and $j=3$.
Using \cref{thm:decompnos,thm:Henkeformula}, we deduce that $\Delta(1^7) \otimes \Delta(1^3)$ (and therefore $\spe{((ke),(je))}$) has two summands, composed from the simple Weyl modules $\Delta(1^{10}) = \sL{1^{10}}$ and $\Delta(2,1^8) = \sL{2,1^8}$, as well as the Weyl modules
\[
\Delta(2^2,1^6) \cong \sL{1^{10}} \hspace{2pt} |\hspace{1pt} \sL{2^2,1^6}
\quad \text{and} \quad
\Delta(2^3,1^4) \cong \sL{2^2,1^6} \hspace{2pt} |\hspace{1pt} \sL{2^3,1^4}.
\]
By checking residues, one can see that $\Delta(2,1^8)$ lies in one block, while the other three Weyl modules lie in another one.
Thus one summand of $\Delta(1^7) \otimes \Delta(1^3)$ is simple, and isomorphic to $\sL{2,1^8}$, while the other one comprises of the remaining three Weyl modules, or five simple modules, and must be self-dual.
It follows that this summand has one of the following two structures.
\[
\begin{tikzcd}
	& {\operatorname{L}(2^2,1^6)} &&& {\operatorname{L}(2^2,1^6)} && {\operatorname{L}(1^{{10}})} \\
	{\operatorname{L}(1^{10})} & {\operatorname{L}(2^3,1^4)} & {\operatorname{L}(1^{10})} & {\text{or}} && {\operatorname{L}(2^3,1^4)} \\
	& {\operatorname{L}(2^2,1^6)} &&& {\operatorname{L}(1^{10})} && {\operatorname{L}(2^2,1^6)}
	\arrow[no head, from=3-5, to=1-5]
	\arrow[no head, from=1-7, to=3-7]
	\arrow[no head, from=1-5, to=2-6]
	\arrow[no head, from=2-6, to=3-7]
	\arrow[no head, from=1-2, to=2-2]
	\arrow[no head, from=1-2, to=2-1]
	\arrow[no head, from=1-2, to=2-3]
	\arrow[no head, from=2-1, to=3-2]
	\arrow[no head, from=2-2, to=3-2]
	\arrow[no head, from=2-3, to=3-2]
\end{tikzcd}
\]

But, as in the proof of \cref{prop:j=2}(vi), by \cite[Lemma~A3.1]{Donkin}, $\Delta(1^7) \otimes \Delta(1^3)$ must have $\Delta(1^{10}) = \sL{1^{10}}$ appearing as a quotient module, which rules out the first configuration above.
\end{Example}

%\section*{Data availability}
%
%Data sharing not applicable to this article as no datasets were generated or analysed during the current study.

\bibliographystyle{amsalpha} %(other styles: ieeetr, plain, amsplain, siam, unsrt, abbrv, apalike, alpha, amsalpha)
\phantomsection
\addcontentsline{toc}{section}{\refname}
\bibliography{master}

\end{document}